\RequirePackage{rotating}
\documentclass[12pt]{amsart}
\usepackage[margin=1.2in,footskip=0.25in]{geometry}

\usepackage[graphicx]{realboxes}
\usepackage{float}
\restylefloat{table}
\usepackage{placeins}
\setcounter{tocdepth}{1}
\usepackage{amsmath}

\usepackage{amssymb}
\usepackage{epsfig}
\usepackage{wasysym}
\usepackage{graphicx}
\usepackage{tikz}
\usepackage{tikz-cd}
\usepackage{bm}
\usepackage{xcolor}
\usepackage{listings}
\numberwithin{equation}{section}
\usepackage{extpfeil}
\usepackage{array}
\usepackage{adjustbox}
\usepackage{longtable}
\usepackage{makecell}
\usepackage[all]{xy}
\usepackage{longtable}
\usepackage{rotating}
\input xy
\xyoption{all}
\usepackage{multirow}

\usetikzlibrary{positioning}
\usepackage[colorlinks=false,urlbordercolor=white]{hyperref}
  
\tikzset{sgplattice/.style={inner sep=1pt,norm/.style={red!50!blue},char/.style={blue!50!black},
  lin/.style={black!50}},cnj/.style={black!50,yshift=-2.5pt,left=-1pt of #1,scale=0.5,fill=white}}

\DeclareFontFamily{U}{mathb}{\hyphenchar\font45}
\DeclareFontShape{U}{mathb}{m}{n}{
      <5> <6> <7> <8> <9> <10> gen * mathb
      <10.95> mathb10 <12> <14.4> <17.28> <20.74> <24.88> mathb12
      }{}
\DeclareSymbolFont{mathb}{U}{mathb}{m}{n}
\DeclareMathSymbol{\righttoleftarrow}{3}{mathb}{"FD}

\calclayout
\allowdisplaybreaks[3]

\theoremstyle{plain}
\newtheorem{prop}{Proposition}[section]

\newtheorem{theo}[prop]{Theorem}
\newtheorem{coro}[prop]{Corollary}

\newtheorem{lemm}[prop]{Lemma}

\theoremstyle{definition}
\newtheorem{defi}[prop]{Definition}

\newtheorem{ques}[prop]{Question}
\newtheorem{conj}[prop]{Conjecture}

\newtheorem{rema}[prop]{Remark}

\newtheorem{exam}[prop]{Example}

\def\cN{{\mathcal N}}
\def\cO{{\mathcal O}}

\def\sA{{\mathsf A}}

\def\bc{{\mathbf c}}

\def\dd{{\mathrm d}}

\def\bG{{\mathbb G}}
\def\bP{{\mathbb P}}
\def\bQ{{\mathbb Q}}

\def\bZ{{\mathbb Z}}

\def\bc{{\mathbf c}}

\def\bN{{\mathbb N}}
\def\bC{{\mathbb C}}

\def\rH{{\mathrm H}}

\def\bF{{\mathbb F}}

\def\Bl{\mathrm{Bl}}
\def\Pic{\mathrm{Pic}}

\def\Gal{\mathrm{Gal}}

\def\Gr{\mathrm{Gr}}

\def\SL{\mathsf{SL}}

\def\PGL{\mathsf{PGL}}

\def\Burn{\mathrm{Burn}}
\def\Burnf{\mathbf{Burn}}
\def\dBurn{\mathbf{DivBurn}}
\def\Bir{\mathrm{Bir}}

\def\lim{\mathrm{lim}}

\def\Cr{\mathrm{Cr}}

\def\Sing{\mathrm{Sing}}

\makeatother
\makeatletter

\begin{document}
\title[]{Birational invariants of volume preserving maps}

\author[K. Loginov]{Konstantin Loginov}
\address{
Steklov Mathematical Institute of Russian Academy of Sciences, Moscow, Russia
}

\email{loginov@mi-ras.ru}

\author[Zh. Zhang]{Zhijia Zhang}

\address{
Courant Institute,
  251 Mercer Street,
  New York, NY 10012, USA
}

\email{zhijia.zhang@cims.nyu.edu}

\date{\today}

\begin{abstract}
We study the group of birational automorphisms of the $n$-dimensional projective space that preserve the standard torus invariant  volume form with logarithmic poles.  
We prove that this group is not generated by pseudo-regularizable maps for $n\geq 4$ over $\mathbb{C}$, and for $n\geq 3$ over number fields. 
As a corollary, we show that this group is not simple in these cases.
\end{abstract}

\maketitle
\tableofcontents

\section{Introduction}
Let $\Bbbk$ be a field of characteristic zero. It is a long-standing problem in birational geometry to understand the Cremona group $\Cr_n(\Bbbk)=\Bir_\Bbbk(\bP^n)$, the group of birational automorphisms over $\Bbbk$ of the $n$-dimensional projective space. 
The classical problem of finding explicit generating sets of $\Cr_n(\Bbbk)$ dates back to the 19th century. In dimension~$2$, the solution is known as the Noether-Castelnuovo theorem:
\begin{theo}[\cite{Noether, Castelnuovo}]
Over an algebraically closed field $\Bbbk$, the group $\mathrm{Cr}_2(\Bbbk)$ is generated by $\PGL_3(\Bbbk)=\mathrm{Aut}_{\Bbbk}(\bP^2)$, and the Cremona involution $\sigma$ acting on $\bP^2$ via
\[
\sigma: (x:y:z)\mapsto \left(\frac1x:\frac1y:\frac1z\right).
\]
\end{theo}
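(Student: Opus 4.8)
\medskip
\noindent\textbf{Proof proposal.} The plan is to run the classical degree-lowering induction via quadratic transformations, in the form due to Noether and completed by Castelnuovo. To a birational map $\phi\colon\bP^2\dashrightarrow\bP^2$ I attach the \emph{homaloidal net} $\cH_\phi:=\phi^{*}|\cO_{\bP^2}(1)|$, the linear system of plane curves of degree $d=\deg\phi$ pulled back from lines. Resolving the indeterminacy of $\phi$ by successive blow-ups produces base points $p_1,\dots,p_k$ of $\cH_\phi$, in general infinitely near one another, carrying multiplicities $m_1\ge m_2\ge\cdots\ge m_k\ge 1$, together with the two \emph{Noether equations}
\[
\sum_i m_i^2=d^2-1,\qquad \sum_i m_i=3(d-1),
\]
which record that the proper transform of $\cH_\phi$ on the resolution has self-intersection $1$ and intersection number $-3$ with the canonical class; equivalently, $\cH_\phi$ is base-point free on the resolution, separates points, and has rational general member. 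I argue by induction on $d$: if $d=1$ then $\phi\in\PGL_3(\Bbbk)=\Aut_\Bbbk(\bP^2)$ and there is nothing to prove.

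The engine is the \emph{Noether inequality}: if $d\ge 2$, then $m_1+m_2+m_3\ge d+1$. This is elementary from the two Noether equations — one checks first that a homaloidal net with at most two base points forces $d=1$, and then, writing $S=m_1+m_2+m_3$ and bounding the tail $\sum_{i\ge 4}m_i^2\le m_3\sum_{i\ge 4}m_i$, the equations are incompatible with $S\le d$. Suppose now that $p_1,p_2,p_3$ happen to be three \emph{proper}, non-collinear points of $\bP^2$, and let $q$ be the quadratic transformation with these base points. Since any two non-collinear triples of points in $\bP^2$ are projectively equivalent, $q=g_1\circ\sigma\circ g_2$ for some $g_1,g_2\in\PGL_3(\Bbbk)$; and a direct intersection-theoretic computation of the homaloidal net of the composite gives $\deg(\phi\circ q^{-1})=2d-m_1-m_2-m_3\le d-1<d$. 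By the induction hypothesis $\phi\circ q^{-1}$, and hence $\phi$, lies in the subgroup generated by $\PGL_3(\Bbbk)$ and $\sigma$.

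The genuinely delicate case — Castelnuovo's refinement of Noether's original argument — is when $p_1,p_2,p_3$ are \emph{not} three non-collinear proper points: some may be infinitely near, or they may become collinear after blowing up, so that no ordinary quadratic transformation can be based at the cluster $\{p_1,p_2,p_3\}$. The remedy is to enlarge the induction, arguing on the pair $(d,\text{number of base points})$, and to interpose auxiliary quadratic or de Jonqui\`eres transformations (the latter having a single point of large multiplicity together with many simple points lying on a conic through it) — possibly after composing with a projective automorphism to ``spread apart'' infinitely near points — so as to force the degree to drop after finitely many steps, each step being a product of elements of $\PGL_3(\Bbbk)$ and $\sigma$. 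I expect this bookkeeping with infinitely near base points to be the main obstacle; it is the technical heart of the Castelnuovo half of the proof. Once the induction closes, $\phi$ is exhibited as a product of projective transformations and copies of $\sigma$, which is the assertion. Alternatively, one may deduce the theorem from the Sarkisov program for rational surfaces, factoring $\phi$ into elementary links between Mori fibre spaces over the algebraically closed field $\Bbbk$ and checking that these links are generated by $\PGL_3(\Bbbk)$ and $\sigma$; but the classical route above is the most self-contained.
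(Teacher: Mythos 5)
This is the classical Noether--Castelnuovo theorem, which the paper only cites (to \cite{Noether, Castelnuovo}) and does not prove, so your outline can only be measured against the standard classical argument — and as an outline it follows exactly that route: homaloidal net, the two Noether equations, the Noether inequality $m_1+m_2+m_3\ge d+1$, and degree-lowering by quadratic transformations, which is fine as far as it goes. The easy half is essentially right: when the three points of largest multiplicity are proper and non-collinear, composing with the quadratic map based at them drops the degree to $2d-m_1-m_2-m_3\le d-1$, and any such quadratic map is $g_1\sigma g_2$ with $g_i\in\PGL_3(\Bbbk)$.

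However, your proposal has a genuine gap precisely at the point where the theorem is hard. The case in which some of $p_1,p_2,p_3$ are infinitely near, or the triple is collinear, is exactly where Noether's original argument was incomplete, and your treatment of it is a declaration of intent rather than a proof ("I expect this bookkeeping\dots to be the main obstacle"). Moreover, the one concrete device you offer for this case does not work as stated: composing with a projective automorphism cannot "spread apart" infinitely near base points, since post-composition leaves the homaloidal net unchanged and pre-composition transports the base cluster by an isomorphism, preserving its infinitely-near structure. What Castelnuovo's argument actually requires is (i) a finer induction (e.g.\ on a lexicographic complexity such as $(d,m_1)$ or on the number of non-proper maximal points), (ii) interposing auxiliary quadratic or de Jonqui\`eres transformations whose base points include general points of $\bP^2$, accepting that the degree may temporarily rise, and (iii) the separate theorem that every de Jonqui\`eres transformation is a product of quadratic ones. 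Your description of de Jonqui\`eres maps is also off: a degree-$n$ de Jonqui\`eres map has one base point of multiplicity $n-1$ and $2n-2$ simple base points, with no requirement that the simple points lie on a conic. Until step (ii) is carried out with correct positional hypotheses and a complexity that provably decreases, the induction does not close, so the proposal is an accurate roadmap of the Noether half but not a proof of the Castelnuovo half.
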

\noindent The situation is more complicated when $\Bbbk$ is not algebraically closed: the base loci of the maps generated by $\PGL_3(\Bbbk)$ and $\iota$ only consist of $\Bbbk$-rational points, so these maps form a proper subgroup of $\Cr_2(\Bbbk)$, see \cite{BHcre}. 
Generators of $\Cr_2(\Bbbk)$ over a nonclosed field $\Bbbk$ are described in \cite{Isk91}. 
Despite significant developments in modern birational geometry, the problem of finding explicit generating sets of $\Cr_n(\Bbbk)$ for $n\geq 3$ is still open. 

In many cases, it is also interesting to study birational maps preserving additional structures, e.g., symmetries or volume forms. In this paper, we study birational maps that preserve logarithmic volume forms (see Definition \ref{defi:logform}). In particular, consider the \emph{standard torus invariant volume form} on $\bP^n$, given by the formula
\begin{equation}
\label{eq-standard-toric-form}
\omega_n = \frac{\dd x_1}{x_1}\wedge\cdots\wedge\frac{\dd x_n}{x_n}
\end{equation}
in some affine chart $\mathbb{A}^n\subset \bP^n$. Let $\Bir_\Bbbk(\bP^n,\omega_n)$ be the group of {\em volume preserving} birational automorphisms over $\Bbbk$ of the pair $[\bP^n,\omega_n]$, i.e., birational automorphisms over $\Bbbk$  of $\bP^n$ that preserve $\omega_n$ (see Definition \ref{defi:vol}). Corti and Kaloghiros showed that over $\Bbbk=\bC$, this group is generated by volume preserving Sarkisov links \cite{Cortivolume}. We give a negative answer to the existence of a {\em simple} generating set of $\Bir_\Bbbk(\bP^n,\omega_n)$, for various fields $\Bbbk$ when $n\geq 3.$ 

In dimension $2$, an explicit set of generators of $\Bir_\bC(\bP^2,\omega_2)$ was found by Usnich \cite{Usn} and Blanc \cite{Bl}. Note that the standard torus action of $\bG_m^2$ 
on~$\bP^2$ preserves $\omega_2$. There is also an embedding of the group $\SL_2(\mathbb{Z})$ into $\Bir_\Bbbk(\bP^2,\omega_2)$, via automorphisms of $\mathbb G_m^2$ given by the following formula: 
\[
(x, y) \mapsto (x^a y^b, x^c y^d),
\]
for an element ${\tiny\begin{pmatrix}
    a&b\\
    c&d
\end{pmatrix}}$$\in\SL_2(\mathbb{Z})$ .

\begin{theo}[{\cite{Bl}}]\label{theo:blanc}
Let $\omega_2$ be the standard torus invariant volume form \eqref{eq-standard-toric-form} on~$\bP^2$ over an algebraically closed field $\Bbbk$.
Then the group $\Bir_\Bbbk(\bP^2, \omega_2)$ is generated by the subgroups $\bG_m^2$, $\SL_2(\mathbb{Z})$ described above, and the cluster transformation $\tau$ of order $5$: 
\[
\tau\colon (x,y)\mapsto \left(y, \frac{y+1}{x}\right).
\]
\end{theo}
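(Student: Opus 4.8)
The plan is to identify $\Bir_\Bbbk(\bP^2,\omega_2)$ with the group of crepant (``volume preserving'') birational self-maps of the log Calabi--Yau pair $(\bP^2,D)$, where $D=\{xyz=0\}$ is the toric boundary triangle, and then to decompose such maps via the Sarkisov-type factorization available in the volume preserving category. Since $K_{\bP^2}+D=0$ and $\omega_2$ is, up to scalar, the unique global logarithmic volume form with poles along $D$, a birational self-map $f$ of $\bP^2$ satisfies $f^{*}\omega_2=\omega_2$ exactly when, on a common log resolution $p,q\colon Y\to\bP^2$ of $f$, the total transforms $p^{*}D$ and $q^{*}D$ both equal one reduced anticanonical cycle of rational curves $D_Y$ and $p^{*}\omega_2=q^{*}\omega_2$; on the open orbit this is the same as a birational self-map of $\gm^2=\bP^2\setminus D$ fixing $\tfrac{\dd x}{x}\wedge\tfrac{\dd y}{y}$. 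First I would record the easy inclusions $\gm^2,\ \SL_2(\mathbb{Z}),\ \tau\in\Bir_\Bbbk(\bP^2,\omega_2)$ and the relation $\tau^{5}=\mathrm{id}$: for a monomial map the pullback of $\omega_2$ is multiplication by the determinant, so it is volume preserving precisely when the matrix lies in $\SL_2(\mathbb{Z})$ (this is why $\GL_2(\mathbb{Z})$ does not appear); a direct computation gives $\tau^{*}\omega_2=\omega_2$; and $\tau^{5}=\mathrm{id}$ is the $A_2$ pentagon recurrence $x_{n+1}x_{n-1}=x_n+1$. I would also note that $\Aut(\bP^2,\omega_2)=\gm^2\rtimes\mathbb{Z}/3$, the $\mathbb{Z}/3$ being generated by the monomial map $(x,y)\mapsto(y/x,\,1/x)\in\SL_2(\mathbb{Z})$, so the automorphism group already lies in $\langle\gm^2,\SL_2(\mathbb{Z})\rangle$.

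Next I would invoke the theorem of Corti--Kaloghiros quoted above in the case $n=2$: every element of $\Bir_\Bbbk(\bP^2,\omega_2)$ is a composition of volume preserving Sarkisov links between log Calabi--Yau Mori fibre spaces. This step replaces the classical Noether--Castelnuovo factorization, which need not be volume preserving. In dimension $2$ the Mori fibre spaces occurring in the relevant network are $\bP^2$ with a triangular boundary and the Hirzebruch surfaces $\bF_n$ with a ``square'' boundary (two disjoint sections together with two fibres), and each volume preserving link is obtained by blowing up points lying on the boundary cycle --- at nodes, or at smooth points of a component, possibly infinitely near, but always along the cycle --- and then contracting $(-1)$-curves appearing in the transformed cycle. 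Hence it suffices to prove that every such elementary link is a composition of elements of $\gm^2$, $\SL_2(\mathbb{Z})$ and $\tau$.

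The heart of the argument is the classification of these elementary links. Using translations one normalizes the position of any base point on a fixed boundary component; using $\SL_2(\mathbb{Z})$ (and the order-$3$ monomial symmetry it contains) one permutes the boundary components and normalizes the combinatorial ``slopes'' of the blown-up points, so that the purely toric links --- those with all centres at nodes of the cycle --- are absorbed into $\gm^2\rtimes\SL_2(\mathbb{Z})$. What remains after these normalizations is essentially a single non-toric elementary volume preserving link together with its inverse, and this is realized by $\tau^{\pm1}$: geometrically $\tau$ extends to a biregular automorphism of order $5$ of a smooth rational surface carrying an anticanonical cycle of rational curves --- concretely a degree-$5$ log del Pezzo pair, on which $\tau$ acts as the cyclic rotation of a pentagon of $(-1)$-curves (a $5$-cycle among the $10$ lines) --- and the network of links relates this pair to $(\bP^2,D)$. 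With the links in hand I would finish by running the volume preserving Sarkisov algorithm on an arbitrary $f$ and inducting on a suitable complexity (say the number of base points, or the degree): at each step the complexity drops, or can be forced to drop after pre- and post-composing with elements of $\gm^2\rtimes\mathbb{Z}/3$ and with $\tau^{\pm1}$; when the complexity is minimal, $f$ is an automorphism of $(\bP^2,D)$, hence lies in $\gm^2\rtimes\mathbb{Z}/3\subset\langle\gm^2,\SL_2(\mathbb{Z}),\tau\rangle$. This yields the stated generation.

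I expect the main obstacle to be exactly this combinatorial classification. One must enumerate the anticanonical cycles on rational surfaces that appear in the network through $(\bP^2,\text{triangle})$, track how the length of the cycle and the self-intersections of its components change under blowups at nodes versus at smooth (or infinitely near) points, decide which of the resulting Picard rank-$2$ log Calabi--Yau surfaces admit a further volume preserving contraction back into the network, and verify that modulo the toric symmetries and monomial maps all of these links collapse onto the single new generator $\tau$, with the five-periodicity $\tau^{5}=\mathrm{id}$ being precisely what makes the bookkeeping terminate. A secondary point, given that the statement is over an arbitrary algebraically closed field of characteristic zero, is to check that the finitely many surfaces, links and relations used in the reduction are all defined and valid over such a field rather than only over $\bC$.
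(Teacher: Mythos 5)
You should first note that the paper contains no proof of Theorem~\ref{theo:blanc}: it is quoted from Blanc \cite{Bl} as background, so there is no internal argument to compare yours with. Blanc's original proof also does not go through the route you propose: it proceeds by a direct analysis of the base points of a volume preserving map (all of which lie on the boundary triangle or are infinitely near points on its strict transforms) and an induction on degree, and it predates the Corti--Kaloghiros volume preserving Sarkisov program that your plan relies on. Your preliminary steps are correct: the pullback of $\omega_2$ under a monomial map is multiplication by the determinant, $\tau^*\omega_2=\omega_2$, $\tau^5=\mathrm{id}$ via the pentagon recurrence, and $\mathrm{Aut}(\bP^2,\omega_2)=\bG_m^2\rtimes\mathbb{Z}/3$.

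The genuine gap is that the central step --- showing every volume preserving Sarkisov link occurring in the factorization is a word in $\bG_m^2$, $\SL_2(\mathbb{Z})$ and $\tau$ --- is only asserted, and this is essentially the entire content of the theorem rather than a technical remainder. Moreover, the partial description you give of the network is already inaccurate: the Mori fibred Calabi--Yau pairs reachable from $(\bP^2,\{xyz=0\})$ by volume preserving links are not only $\bP^2$ with a triangle and Hirzebruch surfaces with a length-four cycle. Blowing up a smooth point of a boundary component (the new exceptional divisor enters the crepant boundary with coefficient $0$) and contracting leads to pairs such as $(\bP^2,\text{conic}+\text{line})$ and $(\bP^2,\text{nodal cubic})$, i.e.\ anticanonical cycles of length $2$ and $1$, and it is exactly in relating these non-toric boundaries back to the triangle that the cluster map $\tau$ (regularized on the degree-$5$ del Pezzo pair) is needed; such links cannot be normalized away by $\bG_m^2$ and $\SL_2(\mathbb{Z})$ alone, contrary to the impression your reduction gives. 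The claim that ``essentially a single non-toric elementary link remains, realized by $\tau^{\pm1}$'' therefore requires the full combinatorial enumeration of boundary cycles, centres (including infinitely near ones), and contractions that you explicitly defer, and without it the proposed induction on complexity has no verified decrease step. A smaller point: the paper quotes Corti--Kaloghiros over $\bC$, so if you use it over an arbitrary algebraically closed field of characteristic zero you must justify that extension (it does hold, via the MMP in characteristic zero, but it should be said). As it stands the proposal is a plausible strategy outline in the spirit of later Sarkisov-theoretic treatments, not a proof of the theorem.
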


A birational automorphism of $\bP^n$ is called {\em regularizable} if it is conjugate  in $\Cr_n(\Bbbk)$ to a {\em regular} automorphism of a rational variety. For example, the map $\tau$ in Theorem \ref{theo:blanc} is regularized on a del Pezzo surface of degree $5$. 
In particular, elements of finite order in $\Cr_n(\Bbbk)$ are regularizable. 
A birational automorphism is called \emph{pseudo-regularizable} if it is conjugate in $\Cr_n(\Bbbk)$ to a pseudo-automorphism of a rational variety, i.e., to a birational automorphism which is an isomorphism in codimension $1$ (see Example~\ref{exam:vanishpseudo}). 
Since $\SL_2(\mathbb{Z})$ can be generated by elements of finite order, it follows that both $\mathrm{Cr}_2(\Bbbk)$ and $\Bir_\Bbbk(\bP^2, \omega_2)$ are generated by regularizable elements if $\Bbbk$ is algebraically closed. 

When $n\geq 3$ and $\Bbbk$ is algebraically closed, it is known that $\Cr_n(\Bbbk)$ cannot be generated by regular automorphisms together with birational maps of bounded degree or countably many elements when $\Bbbk$ is uncountable \cite{IvanPan}.
 Recently, Lin and Shinder showed that $\Cr_n(\Bbbk)$ cannot be generated by pseudo-regularizable elements.

 \begin{theo}[{\cite{SL}}]\label{theo:sl}
     In each of the following cases, $\Cr_n(\Bbbk)$ is not generated by pseudo-regularizable elements:
     \begin{enumerate}
         \item $n\geq 3$ and $\Bbbk$ is a number field; or the function field of an algebraic variety over a number field, over a finite field, or over an algebraically closed field,
        \item $n\geq 4$ and $\Bbbk$ is a subfield of $\bC$,
        \item $n\geq 5$ and $\Bbbk$ is any infinite field.
     \end{enumerate}
 \end{theo}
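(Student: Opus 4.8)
The plan is to produce a group homomorphism out of $\Cr_n(\Bbbk)$ that annihilates every pseudo-regularizable element but is non-zero on one explicit Cremona transformation; since the kernel of a homomorphism is a subgroup, this forces the subgroup generated by pseudo-regularizable elements to be proper. \emph{Step 1: a divisorial invariant.} Let $B_{n-1}=\bZ[\Bir_{n-1}(\Bbbk)]$ be the free abelian group on the $\Bbbk$-birational equivalence classes $[W]$ of $(n-1)$-dimensional varieties over $\Bbbk$. Given $\phi\in\Cr_n(\Bbbk)$, choose a common resolution $\bP^n\xleftarrow{\,p\,}Z\xrightarrow{\,q\,}\bP^n$ with $Z$ smooth and projective, and set
\[
c(\phi)\ =\ \sum_{E}[E]\ -\ \sum_{F}[F]\ \in\ B_{n-1},
\]
where $E$ (resp.\ $F$) runs over the prime divisors of $Z$ that are exceptional for $p$ but not for $q$ (resp.\ for $q$ but not for $p$). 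The first task is to show that $c(\phi)$ is independent of the resolution and that $c\colon\Cr_n(\Bbbk)\to B_{n-1}$ is a group homomorphism; the same recipe then gives a homomorphism $c_X\colon\Bir(X)\to B_{n-1}$ for every smooth projective rational $n$-fold $X$, compatibly with birational identifications $X\dashrightarrow\bP^n$. Both statements reduce, via the Weak Factorization Theorem, to comparing two resolutions differing by a single smooth blow-up: blowing up a smooth centre of dimension $d\le n-2$ inserts a $\bP^{n-1-d}$-bundle over the centre which, once it is also carried through the other projection, appears in both exceptional sets and cancels, leaving $c$ unchanged. This is careful bookkeeping but not conceptually deep.

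\emph{Step 2: pseudo-regularizable elements lie in $\ker c$.} A pseudo-automorphism $g$ of a smooth projective rational $n$-fold $Y$ is an isomorphism in codimension one, so in any resolution of $g$ no prime divisor is exceptional for one projection and not the other; hence $c_Y(g)=0$. Because $B_{n-1}$ is abelian and $c$ is a homomorphism, $c$ is automatically a class function: $c(\psi\eta\psi^{-1})=c(\psi)+c(\eta)-c(\psi)=c(\eta)$. Combining this with the birational compatibility of Step 1, any $\phi\in\Cr_n(\Bbbk)$ conjugate to such a $g$ satisfies $c(\phi)=c_Y(g)=0$, so $c$ annihilates the entire subgroup generated by pseudo-regularizable elements.

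\emph{Step 3: an explicit element with $c\neq0$.} It remains to exhibit, in each listed case, a Cremona transformation $\phi$ whose resolution contracts, on exactly one of the two sides, a prime divisor $W$ of dimension $n-1$ that is not $\Bbbk$-birational to any divisor contracted on the other side, so that $[W]$ survives in $c(\phi)$. Such a $W$ is necessarily uniruled (exceptional loci of birational morphisms between smooth projective varieties are uniruled), so non-uniruled examples such as K3 surfaces are of no use: one needs a uniruled contracted divisor that is nonetheless not $\Bbbk$-rational, or at least not matched on the other side. For $n\ge3$ over a number field, take a smooth conic $C$ in a plane $\bP^2\subset\bP^n$ with no $\Bbbk$-rational point and a Cremona transformation built from a linear system of quadrics through $C$ (possibly together with auxiliary linear base loci): its resolution blows up $C$, the exceptional divisor is a $\bP^{n-2}$-bundle over $C$ which has no $\Bbbk$-point and hence is not $\Bbbk$-rational, and the map can be arranged so that $\phi^{-1}$ contracts only rational divisors; then $c(\phi)\neq0$. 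The same device applies over the other function fields appearing in (1). For $n\ge4$ and $\Bbbk$ a subfield of $\bC$, one replaces the conic by a construction whose contracted divisor is a threefold that is non-rational already over $\bC$ — for instance a non-rational conic bundle with discriminant of large degree, or $\bP^1$ times a non-rational surface realized as the exceptional divisor of the blow-up of a surface in $\bP^4$. For $n\ge5$ and $\Bbbk$ an arbitrary infinite field, the extra dimensions leave enough room to force the non-$\Bbbk$-rationality of a uniruled contracted divisor of dimension $n-1\ge4$ over any infinite field, again by an explicit conic-bundle or determinantal recipe.

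\emph{Expected main obstacle.} The formal part — that $c$ is a well-defined homomorphism vanishing on pseudo-regularizable elements — is governed by Weak Factorization and is essentially combinatorial. The genuine difficulty is Step~3: one must actually construct Cremona transformations of $\bP^n$ whose entire exceptional configuration is under control and contains a prescribed non-$\Bbbk$-rational (necessarily uniruled) divisor with no cancelling counterpart, and then establish the required non-$\Bbbk$-rationality over the field in question. This is precisely where the hypotheses on $n$ and on $\Bbbk$ enter, and where the substance of the proof lies.
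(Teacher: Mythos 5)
Your Steps 1--2 (the invariant $c$ with values in the free abelian group on birational classes of $(n-1)$-folds, well-definedness via Weak Factorization, vanishing on pseudo-regularizable elements and on conjugates) reproduce exactly the formal framework of Lin--Shinder, which this paper only recalls since Theorem~\ref{theo:sl} is cited from \cite{SL}. The substance, as you yourself note, is Step~3, and there your proposal has genuine gaps. For case (1) the pointless-conic recipe does not work as described: in the natural Cremona transformations built from quadrics through a conic $C$ without $\Bbbk$-points (e.g.\ the quadro-quadric transformation of $\bP^3$ based on $C$ plus a point), the exceptional divisor over $C$, birational to $C\times\bP^1$, is matched on the other side by a contracted cone over $C$ (a cone over a pointless conic has a single rational point, namely its vertex, and is again birational to $C\times\bP^1$), so the two classes cancel and $c(\phi)=0$. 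To get a surviving class one needs the transformation to produce a \emph{different}, non-birational ruled divisor on the other side; this is precisely why \cite{SL} (and Section~\ref{sect:ellip} of this paper) use a quintic genus one curve $C$ with no $\Bbbk$-points and the quadro-cubic transformation, exploiting that $C\not\cong\mathrm{Jac}^2(C)$, an asymmetry that has no analogue in your conic setup and for which you offer no substitute construction. Merely asserting that ``the map can be arranged so that $\phi^{-1}$ contracts only rational divisors'' begs the question.

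For case (2) your aside that ``non-uniruled examples such as K3 surfaces are of no use'' misreads the actual construction: the contracted divisors are not K3 surfaces but uniruled threefolds birational to $S\times\bP^1$ for a K3 surface $S$, and the Lin--Shinder proof (recalled in Section~\ref{sect:K3}) uses the Hassett--Lai transformation of $\bP^4$ attached to a degree-$12$ K3 surface and its non-isomorphic Fourier--Mukai partner, with non-cancellation coming from the fact that non-isomorphic K3 surfaces are not stably birational. Your proposed alternatives (a conic bundle with large discriminant, or $\bP^1$ times a non-rational surface ``realized as the exceptional divisor of a blow-up'') are wish lists, not constructions: no Cremona transformation of $\bP^4$ (or of $\bP^n$, $n\geq 5$, for case (3)) with controlled exceptional configuration is exhibited, and the decisive point --- that no divisor contracted by the inverse is birational to the chosen one --- is exactly what is left unproved. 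So the proposal captures the strategy but not the proof; the constructions that make the strategy work are the content of \cite{SL}.
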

 
Theorem~\ref{theo:sl} implies that any generating set of $\Cr_n(\Bbbk)$ in the given cases must be considerably intricate. Our main theorem is a generalization of this result to $\Bir_\Bbbk(\bP^n,\omega_n)$.

\begin{theo}[{Corollary \ref{coro:main3}, Corollary \ref{coro:main4}}]
\label{thm-main-thm}
Let $\omega_n$ be the standard torus invariant volume form \eqref{eq-standard-toric-form} on $\bP^n$. 
In each of the following cases, $\Bir_\Bbbk(\bP^n,\omega_n)$ is not generated by pseudo-regularizable elements:
\begin{enumerate}
    \item $n\geq3$ and $\Bbbk$ is a number field; or the function field of an algebraic variety over a number field, over a finite field, or over an algebraically closed field,
    \item $n\geq 4$ and $\Bbbk=\bC$.
\end{enumerate}
In particular, $\Bir_\Bbbk(\bP^n,\omega_n)$ in cases (1) or (2) is not generated by the standard torus action of $\bG_m^n$ on $\bP^n$ together with elements of finite order.


\end{theo}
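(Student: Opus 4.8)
\textbf{Proof plan.} It suffices to construct a homomorphism $c$ from $\Bir_\Bbbk(\bP^n,\omega_n)$ to an abelian group $B$ which vanishes on every pseudo-regularizable element but is not identically zero. Indeed, conjugation by an element of $\Bir_\Bbbk(\bP^n,\omega_n)$ is a special case of conjugation in $\Cr_n(\Bbbk)$, so the pseudo-regularizable elements of $\Bir_\Bbbk(\bP^n,\omega_n)$ form a normal subgroup $N$, and such a $c$ forces $N\subsetneq\Bir_\Bbbk(\bP^n,\omega_n)$, which is the assertion. The concluding ``in particular'' is then immediate: $\bG_m^n$ acts on $\bP^n$ by regular automorphisms, and finite-order elements of $\Cr_n(\Bbbk)$ are regularizable, so both lie in $N$. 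The construction of $c$ follows the template of Lin--Shinder (Theorem~\ref{theo:sl}, \cite{SL}), refined so as to record the volume form.

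\emph{Construction of the obstruction.} The form $\omega_n$ has logarithmic poles exactly along the toric boundary $D_n=\{x_0\cdots x_n=0\}$, so $(\bP^n,D_n)$ is log Calabi--Yau and $\Bir_\Bbbk(\bP^n,\omega_n)$ is precisely its group of crepant birational self-maps. Given such a $\phi$, I would factor it into volume preserving Sarkisov links by the log Calabi--Yau Sarkisov program of Corti--Kaloghiros \cite{Cortivolume}, and record, with signs, the prime divisors created and destroyed along the factorization, each remembered together with the log Calabi--Yau structure it carries by adjunction (its different). This yields a class $c(\phi)$ in a Burnside-type group $B$ built from log Calabi--Yau pairs of dimension $n-1$ over $\Bbbk$ (a variant of the Burnside groups $\Burn$ and $\dBurn$). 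A volume preserving weak factorization statement gives independence of the chosen factorization, and concatenation of factorizations gives additivity, so $c\colon\Bir_\Bbbk(\bP^n,\omega_n)\to B$ is a homomorphism. Here one uses that crepant divisors over the toric pair $(\bP^n,D_n)$ are again toric, hence contribute trivially; the potentially nontrivial classes come from divisors that a volume preserving link may extract without their joining the toric boundary, which sit over codimension-two subvarieties inside a single boundary component and can be birationally arbitrary.

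\emph{Vanishing on $N$.} A volume preserving pseudo-automorphism $\psi$ of a log Calabi--Yau pair neither contracts a divisor nor does its inverse, so nothing is created or destroyed and $c(\psi)=0$; since $B$ is abelian, $c$ then kills every conjugate of such $\psi$, hence all of $N$. The one point requiring care is that ``pseudo-regularizable'' is a priori a statement about conjugacy in $\Cr_n(\Bbbk)$ rather than inside the volume preserving category; I would dispose of it by transporting $\omega_n$ along the conjugating map and running a volume preserving MMP, which upgrades the given pseudo-regularization to a volume preserving one.

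\emph{Nonvanishing and the main obstacle.} It remains to exhibit a single $\phi\in\Bir_\Bbbk(\bP^n,\omega_n)$ with $c(\phi)\ne 0$. I would build $\phi$ as a volume preserving elementary, or Jonqui\`eres-type, modification centred over a smooth subvariety $C$ sitting inside a stratum of $D_n$: presenting $\bP^n$ birationally as a $\bP^1$-fibration over a boundary component compatibly with $\omega_n$, the modification blows up a (suitably twisted) section over $C$ and contracts the other ruling of the resulting exceptional divisor, so that $c(\phi)$ is governed, modulo the relations in $B$, by the birational geometry of $C$ and of the bundle over it. The thresholds $n\ge 3$ over number fields and $n\ge 4$ over $\bC$ are exactly those of Theorem~\ref{theo:sl} and arise for the same arithmetic-geometric reason: $B$ carries classes obstructing pseudo-regularizability only once $n-1$ is large enough — non-split zero- and one-dimensional phenomena (e.g.\ a conic without a rational point in a boundary component $\cong\bP^2$ when $n=3$) suffice over a number field, whereas over $\bC$ one must reach the first non-rational behaviour the invariant detects (in dimension three), which forces $n\ge 4$. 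The substantive part of the argument, and the main obstacle, is precisely this step: writing down a genuinely volume preserving self-map of $\bP^n$ that realizes such a class and checking that the class is not cancelled in $B$; the remaining steps are formal once the Corti--Kaloghiros structure theory and the established properties of Burnside groups are in hand.
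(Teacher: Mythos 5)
Your high-level reduction is the same as the paper's: produce a homomorphism from $\Bir_\Bbbk(\bP^n,\omega_n)$ to an abelian group that kills pseudo-regularizable elements yet is nonzero (the paper uses the invariant $\mathbf c$ of \cite{CLKT} with values in $\Burnf_{n-1}(\Bbbk)$, shown in Lemma~\ref{lemm:discr} to reduce to the Lin--Shinder invariant $c$), and the ``in particular'' clause is handled exactly as you say. But the substantive content of the theorem is the step you explicitly leave open: exhibiting a single volume preserving element with nonvanishing invariant. This is not a routine verification that can be deferred --- it occupies Sections~\ref{sect:ellip} and~\ref{sect:K3} of the paper, where the Lin--Shinder quadro-cubic map of $\bP^3$ (built from a quintic genus-one curve $C$ without $\Bbbk$-points, with $c=[C\times\bP^1]-[\mathrm{Jac}^2(C)\times\bP^1]$) and the Hassett--Lai map of $\bP^4$ (built from a degree-$12$ K3 with a non-isomorphic Fourier--Mukai partner) are extended, through carefully chosen boundaries (quartic del Pezzo and quadric cone on $Q$, cubic surfaces with an $\sA_1$-point, cones over a line plus conic, Ducat's coregularity-$0$ toric models), to crepant self-maps of the standard toric pair; one then passes to a genuinely volume preserving map by raising to a power, using finiteness of the pluri-canonical representation (Theorem~\ref{theo:fujino}, Corollary~\ref{coro:volumepreserve}) --- an ingredient absent from your plan. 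Your proposed mechanism for nonvanishing (a Jonqui\`eres-type elementary modification centred over, e.g., a pointless conic in a boundary stratum) would moreover tend to give zero: for such elementary modifications the extracted and contracted divisors are typically both birational to $C\times\bP^1$ for the same centre $C$, which is precisely why the paper (following \cite{SL}, \cite{HL}) needs maps whose created and destroyed divisors are fibred over non-stably-birational curves or K3 surfaces.

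Two smaller points. First, your construction of the invariant via volume preserving Sarkisov factorizations and a ``volume preserving weak factorization'' re-derives, without proof of well-definedness or additivity, what \cite{CLKT} already provides via resolutions and residues; and your worry about conjugation in $\Cr_n(\Bbbk)$ versus the volume preserving category is resolved in the paper not by any ``volume preserving MMP upgrade'' (which you assert but do not justify, and which your Sarkisov-based definition could not even be applied to, since the conjugated pair need not be the toric one) but simply by the compatibility $\mathbf c=\iota\circ c$ of Lemma~\ref{lemm:discr}: the Lin--Shinder $c$ is defined on all birational maps and vanishes on pseudo-automorphisms, so $\mathbf c$ vanishes on all pseudo-regularizable elements (Example~\ref{exam:vanishpseudo}). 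Second, ``the pseudo-regularizable elements form a normal subgroup'' should read ``generate a normal subgroup''; the intended conclusion is unaffected.
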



Our main tool is a birational invariant $\mathbf{c}$ defined in \cite{CLKT} for volume preserving birational maps, in parallel with the invariant $c$ for birational maps introduced in \cite{SL}, see Section \ref{subsec-birational-invariants}. 
We show that the birational automorphisms used in \cite{SL} can be realized as volume preserving birational automorphisms of the pair $[\bP^n,\omega_n]$ with non-trivial invariant $\mathbf{c}$. This proves Theorem \ref{thm-main-thm} and leads to the following. 
\begin{theo}[{Corollary \ref{coro:nonsimple3}, Corollary \ref{coro:nonsimple4}}]
The group $\Bir_\Bbbk(\bP^n, \omega_n)$ is not simple when $n$ and $\Bbbk$ are as in cases (1) or (2) in Theorem \ref{thm-main-thm}.  
\end{theo}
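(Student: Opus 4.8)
The plan is to deduce non-simplicity from the main theorem (Theorem \ref{thm-main-thm}) in the standard way: a group that admits a non-trivial homomorphism to an abelian group, or more generally any proper non-trivial normal subgroup, is not simple. The key point is that the birational invariant $\mathbf{c}$ of \cite{CLKT}, restricted to $\Bir_\Bbbk(\bP^n,\omega_n)$, should factor through a homomorphism (or a "quasi-homomorphism" in the sense used in \cite{SL}) from $\Bir_\Bbbk(\bP^n,\omega_n)$ to an abelian group $A$ — concretely, to a free abelian group (or a direct sum of Burnside-type groups) generated by volume preserving birational equivalence classes of pairs $[Y,\omega_Y]$ of dimension $n-1$. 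First I would recall from Section \ref{subsec-birational-invariants} the precise target and the fact that $\mathbf{c}$ is additive under composition on the subgroup generated by pseudo-regularizable elements, so that this subgroup $N = \langle \text{pseudo-regularizable elements}\rangle$ lies in the kernel of the associated homomorphism $\mathbf{c}\colon \Bir_\Bbbk(\bP^n,\omega_n)\to A$.

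The second step is to observe that $N$ is a normal subgroup: the conjugate of a pseudo-regularizable element is again pseudo-regularizable (conjugation by $g$ replaces a conjugating map $h$ by $gh$), so the set of pseudo-regularizable elements is conjugation-invariant, hence the subgroup it generates is normal in $\Bir_\Bbbk(\bP^n,\omega_n)$. The third step is to invoke Theorem \ref{thm-main-thm}: in cases (1) and (2) the subgroup $N$ is proper, since if it were everything then $\Bir_\Bbbk(\bP^n,\omega_n)$ would be generated by pseudo-regularizable elements, contradicting the theorem. Finally, $N$ is non-trivial: it contains $\PGL_{n+1}(\Bbbk)\cap \Bir_\Bbbk(\bP^n,\omega_n)$ — e.g. the torus normalizer elements and in particular the standard $\bG_m^n$ action, all of which are regular automorphisms of $\bP^n$ and preserve $\omega_n$, hence regularizable, hence pseudo-regularizable — and this is already a non-trivial group. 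Therefore $N$ is a proper non-trivial normal subgroup, so $\Bir_\Bbbk(\bP^n,\omega_n)$ is not simple in cases (1) and (2).

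I expect the only genuine subtlety to be the first step — pinning down that the invariant $\mathbf{c}$ really does give a well-defined homomorphism out of the volume preserving Cremona group whose kernel contains all pseudo-regularizable elements (and not merely a map of sets that is "non-trivial"). In \cite{SL} the corresponding statement for $c$ and $\Cr_n$ is phrased via a crepant/Burnside-type formalism, and the analogous bookkeeping for the volume preserving setting is exactly what \cite{CLKT} provides; so the argument is to quote the homomorphism property from those sources rather than reprove it. The remaining steps (normality of $N$, properness via Theorem \ref{thm-main-thm}, non-triviality via $\bG_m^n$) are formal. One should also make sure the two ranges of $(n,\Bbbk)$ in the non-simplicity statement match verbatim the two cases in Theorem \ref{thm-main-thm}, which they do by construction, so the corollary is immediate once the homomorphism is in hand.
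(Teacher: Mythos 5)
Your argument is correct, but it takes a different route from the paper. You deduce non-simplicity purely formally from Theorem \ref{thm-main-thm}: the set of pseudo-regularizable elements is conjugation-invariant, so the subgroup $N$ it generates is normal; $N$ is proper precisely because $\Bir_\Bbbk(\bP^n,\omega_n)$ is not generated by pseudo-regularizable elements in cases (1)--(2); and $N$ is non-trivial since it contains the regular (hence pseudo-regularizable) volume preserving automorphisms such as the torus $\bG_m^n$. That is a complete proof, and note that your ``first step'' (that $\mathbf{c}$ is a homomorphism killing pseudo-regularizable elements) is not actually needed for this deduction --- it is only needed to prove Theorem \ref{thm-main-thm} itself, which you are quoting as a black box; in the paper it is supplied by \cite[Corollary 7.6]{CLKT} together with Example \ref{exam:vanishpseudo}. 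The paper's own proof (Corollaries \ref{coro:nonsimple3} and \ref{coro:nonsimple4}) does something stronger: it composes $\mathbf{c}$ with a projection onto the span of the classes $[C\times\bP^{n-2},0]-[\mathrm{Jac}^2(C)\times\bP^{n-2},0]$ (resp. $[S\times\bP^{n-3},0]-[S'\times\bP^{n-3},0]$ for Fourier--Mukai partner K3 surfaces), indexed over families of cardinality $|\Bbbk|$, producing an explicit surjective homomorphism $\Bir_\Bbbk(\bP^n,\omega_n)\to\bigoplus_J\bZ$; non-simplicity is then read off from this huge abelian quotient, whose kernel is proper and non-trivial for the same reasons you give. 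So your approach buys economy (only the non-generation statement plus formal group theory), while the paper's buys the quantitatively stronger structural conclusion about the existence of a free abelian quotient of rank $|\Bbbk|$, of which non-simplicity is a corollary.
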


Another motivation to study varieties equipped with volume forms comes from the theory of {\em log Calabi-Yau pairs}. A  log Calabi-Yau pair is a pair $(X,D)$ consisting of a normal proper variety~$X$ and an effective divisor $D$ with integral coefficients such that $K_X+D\sim 0$, see section \ref{sec-CY-coreg}. For every such pair, there exists a unique (up to scaling) volume form $\omega_D$ on $X$ satisfying $D+\mathrm{div}(\omega_D)=0$. It turns out that  studying volume preserving birational maps is crucial for the understanding of birational geometry of log Calabi-Yau pairs.

In a recent work \cite{araujo2023birational}, the group $\Bir_{\bC}(\bP^3,D)$ of birational automorphisms of $\bP^3$ preserving (up to scaling) the volume form $\omega_D$ associated with an irreducible quartic surface $D$ was studied. The pair $(\bP^3,D)$ in this case has coregularity $2$, see Definition \ref{sec-CY-coreg}. It was noted that the pair is birationally rigid when $D$ is smooth and general (see \cite[Theorem A]{araujo2023birational}) and the appearance of  singularities on $D$ enriches the birational geometry of the pair $(\bP^3,D)$. 

Our work concerns the opposite case: elements in $\Bir_\Bbbk(\bP^n,\omega_n)$ correspond to birational automorphisms of the log Calabi-Yau pairs $(\bP^n,D)$  with coregularity~$0$,
where $D$ is the union of $n+1$ coordinate hyperplanes. Correspondingly, we observe a different behaviour and a richer group structure of $\Bir_{\Bbbk}(\bP^n,\omega_n)$. It turns out that many well-known rational varieties admit a boundary of coregularity $0$, which makes them crepant equivalent (see Definition \ref{diag-crepant-equivalence}) to $\bP^n$ with the union of $n+1$ coordinate hyperplanes, see \cite{LMV24}.

Our constructions in Section~\ref{sect:ellip} and~\ref{sect:K3} imply that certain birational automorphisms of $\bP^n$ preserve an anti-canonical divisor $D$ of coregularity $0$. On the other hand, there exist birational automorphisms of $\bP^n$ which do not preserve any anti-canonical divisor of coregularity~$0$. The following question is interesting:
\begin{ques}
How to characterize elements in $ \mathrm{Cr}_n(\Bbbk)$ which preserve an anti-canonical divisor in $\bP^n$ over $\Bbbk$ of coregularity $0$?
\end{ques}

The group structure of $\Bir_{\Bbbk}(\bP^2, \omega_2)$ has also been studied in different contexts. A surjective (and non-injective) homomorphism from $\Bir_{\bC}(\bP^2, \omega_2)$ to the Thompson group, the group of piecewise-linear automorphisms of $\bZ^2$, was constructed in \cite{Usn}. It follows that $\Bir_\Bbbk(\bP^2, \omega_2)$ is not simple when $\Bbbk$ is algebraically closed. 
The construction of \cite{Usn} is clarified through the action of the group $\Bir_{\Bbbk}(\bP^2, \omega_2)$ on the space of valuations of the function field $\Bbbk(\bP^2)$, see  \cite{Favre}. 
From this perspective, the construction can be naturally generalized to higher dimensions. 
This leads to the following question.
\begin{ques}
Is the natural map from $\Bir_{\Bbbk}(\bP^n, \omega_n)$ to the group of piecewise-linear automorphisms of $\bZ^n$ always surjective?
\end{ques}

Here is a roadmap of this paper. In Section~\ref{sect:pre}, we recall basic notions from birational geometry and the Chambert-Loir--Kontsevich--Tschinkel Burnside formalism of varieties endowed with logarithmic volume forms. In Section~\ref{subsec-birational-invariants}, we study our main tool: the invariant $\mathbf c$ of volume preserving birational maps. Section~\ref{sect:ellip} proves the first assertion of Theorem~\ref{thm-main-thm}, using a classical birational automorphism of $\bP^3$. Section~\ref{sect:K3} proves the second assertion of Theorem~\ref{thm-main-thm}, using the Hassett--Lai birational automorphism of $\bP^4$ constructed in \cite{HL}.   

\

\noindent
{\bf Acknowledgments.} 
The first author is supported by the Russian Science Foundation under grant 24-71-10092. 
The authors are thankful to Yuri Tschinkel for his interest in this work and useful remarks. The authors are grateful to Alexander Kuznetsov for numerous suggestions on the exposition, and to Brendan Hassett and Constantin Shramov for helpful conversations.

\section{Preliminaries}\label{sect:pre}
Throughout, $\Bbbk$ is a field of characteristic $0$, not necessarily algebraically closed. 
We
use the language of the minimal model program, see
e.g. \cite{KM98}. 
\subsection{Pairs and singularities}
A 
morphism 
$
f\colon X \to Y
$
of normal proper varieties 
is called a {\em contraction} if
$
f_*\cO_X =
\cO_Y.
$
Note that a contraction is surjective and has connected fibers. A contraction $f$ is called a
\emph{fibration} if
$
\dim Y<\dim X.
$

A \emph{pair} (resp., a \emph{sub-pair}) $(X, D)$ consists
of a normal proper variety $X$ and a Weil $\mathbb{Q}$-divisor $D$ with
coefficients in $[0, 1]$ (resp., in $(-\infty, 1]$) such that $K_X + D$ is $\mathbb{Q}$-Cartier.
In this situation, we call $D$ a \emph{boundary} (resp., a \emph{sub-boundary}).
Recall that a divisor $D$ on a smooth variety has \emph{simple normal crossings} (snc for short), if all of its components are smooth, and any point in $D$ has an open neighborhood in the analytic topology that is analytically equivalent to the union of coordinate hyperplanes.

We say that $f\colon Y \to X$ is a
\emph{log resolution} of a sub-pair $(X,D)$ if $Y$ is smooth, and $\mathrm{Exc}(f)\cup \mathrm{supp}(f^{-1}_*D)$ is an snc divisor. 
Let $D_Y = -K_Y+f^*(K_X +D)$. 
Then $(Y, D_Y)$ is called the log pullback of $(X, D)$. If $(X, D)$ is a sub-pair and $f$ is a log resolution, then $(Y, D_Y)$ is called an \emph{snc modification}. 
The \emph{log discrepancy} $a(E, X, D)$ of a prime divisor $E$ on $Y$ with respect
to $(X, D)$ is defined as
$$
a(E, X, D):=1 - \mathrm{coeff}_E D_Y.
$$ We
say $(X, D)$ is lc (resp., klt) if $a(E, X, D)\geq 0$ (resp.,~$> 0$) for every such $E$ and for any log resolution $f$. We say that the pair is {plt}, if $a(E, X, D)>0$ holds for any $f$-exceptional divisor $E$ and for any log resolution $f$. 
We say that the pair is {dlt}, if $a(E, X, D)>0$ holds for any $f$-exceptional divisor $E$ and for some log resolution $f$.

An \emph{lc-place} of $(X, D)$ is a prime divisor $E$ on a birational model of~$X$, such that $a(E, X, D) = 0$. An
\emph{lc-center} is the image on $X$ of an lc-place.

\subsection{Dual complex}
\label{subsec-dual-complex}
Let $D=\sum D_i$ be a Cartier divisor on a smooth variety $X$. 
\emph{The dual complex}, denoted by $\mathcal{D}(D)$, of a simple normal crossing divisor $D=\sum_{i=1}^{r} D_i$ on a smooth variety $X$ is a CW-complex constructed as follows. 
The simplices $v_Z$ of $\mathcal{D}(D)$ are in bijection with irreducible components $Z$ of the intersection $\bigcap_{i\in I} D_i$ for any non-empty subset $I\subset \{ 1, \ldots, r\}$, and the vertices of $v_Z$ correspond to the components $D_i$ with $i\in I$. 
In particular, the dimension of $v_Z$ is equal to $\#I-1$. We call $Z$ a \emph{stratum} of $D$.


The gluing maps are constructed as follows. 
For any non-empty subset $I\subset \{ 1, \ldots, r\}$, let $Z\subset \bigcap_{i\in I} D_i$ be a stratum, and for any $j\in I$, let $W$ be the unique component of $\bigcap_{i\in I\setminus\{j\}} D_i$ containing $Z$. Then the gluing map is the inclusion of $v_W$ into $v_Z$ as a face of $v_Z$ that does not contain the vertex $v_i$ corresponding to $D_i$. Note that the dimension of $\mathcal{D}(D)$ does not exceed $\dim X-1$. If $\mathcal{D}(D)$ is empty, i.e., $D=0$, we say that $\dim \mathcal{D}(D)=-1$. 

 We denote by $D^{=1}$  the sum of the components of $D$ with coefficient $1$. For an lc pair $(X, D)$, we define $\mathcal{D}(X, D)$ as $\mathcal{D}(D_Y^{=1})$ where $f\colon (Y, D_Y)\to (X, D)$ is a log resolution of $(X, D)$, so that we have
\[
K_{Y} + D_Y= f^*(K_X + D).
\]
It is known that the PL-homeomorphism class of $\mathcal{D}(D_Y^{=1})$ does not depend on the choice of a log resolution
, see \cite[Proposition 11]{dFKX17}.

\subsection{Calabi-Yau pairs and coregularity}
\label{sec-CY-coreg}
Let $(X, D)$ be an lc sub-pair. We say $(X,D)$ is a \emph{log Calabi-Yau sub-pair} (or \emph{log CY sub-pair} for short) if $D$ is a  Weil divisor with integral coefficients, and $K_X+D$ is linearly equivalent to $0$, denoted by, $K_X + D\sim 0$\,\footnote{Note that our definition is more restrictive than the usual one, where $D$ could have fractional coefficients, and  $K_X+D$ is $\mathbb{Q}$-linearly trivial.}. 
A log CY sub-pair $(X, D)$ is called a \emph{log Calabi-Yau pair} (or \emph{log CY pair}) if $D$ is effective. 
We call a log Calabi-Yau pair $(X,D)$ a {\em toric pair} if $X$ is a $\Bbbk$-split toric variety and the boundary $D$ is a torus invariant divisor. We refer to the pair $(\bP^n, \Delta_n)$ with $\Delta_n=\{x_1x_2\cdots x_{n+1}=0\}$ as the {\em standard toric pair}. 

The \emph{coregularity} of a log Calabi-Yau pair $(X, D)$, denoted by $\mathrm{coreg}(X, D)$ and defined in \cite{Mor22}, is the dimension of a minimal lc-center on any snc (or dlt) modification $(Y, D_Y)$ of $(X, D)$. Note that $\mathrm{coreg}(X, D)$ is equal to the minimal dimension of a stratum of $D_Y^{=1}$. 
Equivalently, the coregularity is equal to 
\[
\mathrm{coreg}(X, D)=\dim X-\dim \mathcal{D}(X, D)-1,
\]
where $\mathcal{D}(X, D)$ is the dual complex of the pair $(X, D)$. 
For more details on coregularity, see \cite{Mor22}. 
The following lemma is a generalization of \cite[Lemma 4.2]{ALP24}. 

\begin{lemm}\label{lemm:corregcubic}
    Let $S\subset \bP^3$ be an irreducible cubic surface with at worst du Val singularities, and $H\subset\bP^3$ a plane. Assume one of the following holds:
    \begin{itemize}
        \item  $S\cap H$ is the union of a line and a smooth conic intersecting transversally;
        \item $S\cap H$ is the union of three lines forming a triangle.
    \end{itemize}
    Then the pair $(\bP^3, S+H)$ is a log Calabi-Yau pair with coregularity 0.
\end{lemm}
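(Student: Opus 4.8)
The plan is to verify the two defining conditions of a log Calabi-Yau pair of coregularity $0$ separately: first that $(\bP^3, S+H)$ is log canonical with $K_{\bP^3}+S+H\sim 0$, and then that it admits an lc-center of dimension $0$, i.e.\ that $\dim\mathcal{D}(\bP^3,S+H)=2$ by the formula $\coreg = \dim X - \dim\mathcal{D} - 1$ recalled above. The linear equivalence $K_{\bP^3}+S+H\sim \mathcal{O}_{\bP^3}(-4+3+1)=\mathcal{O}_{\bP^3}(0)$ is immediate, so the content is the lc property and the computation of the dual complex near the curve $C:=S\cap H$.

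First I would check log canonicity. Away from $C$, the pair looks like $(\bP^3,S)$ or $(\bP^3,H)$ near a smooth point of a single reduced boundary component, plus the du Val points of $S$; since a du Val (canonical) surface singularity $S$ embedded as a Cartier divisor in the smooth threefold $\bP^3$ gives a plt (indeed canonical away from adjunction issues) pair $(\bP^3,S)$ by inversion of adjunction — $(S,0)$ has canonical, hence klt, singularities — the pair $(\bP^3, S+H)$ is lc there. The only delicate locus is along $C=S\cap H$ and at its singular points. Here I would use inversion of adjunction on $H$: $(\bP^3, S+H)$ is lc near $C$ if and only if $(H, S|_H) = (H, C)$ is lc, i.e.\ the plane curve $C\subset H\cong\bP^2$ is a nodal curve. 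In the first case $C$ is a line plus a smooth conic meeting transversally (two nodes), and in the second case $C$ is a triangle of three lines (three nodes); in both cases $C$ is nodal of arithmetic genus $1$, so $(H,C)$ is lc, hence $(\bP^3,S+H)$ is lc along $C$. One should also note $H\not\subset S$ since $S$ is irreducible of degree $3\neq 1$, so the adjunction computation is valid and $S|_H = C$ is reduced of degree $3$.

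Next I would compute the coregularity via the dual complex. Take a log resolution $f\colon Y\to \bP^3$ obtained by blowing up (the strict transforms of) the singular points of $C$ and, if needed, the du Val points of $S$; since those extra blowups over points disjoint from (or meeting transversally) the interesting strata only contribute exceptional divisors with positive log discrepancy or higher-dimensional strata, they do not lower the coregularity. The reduced boundary $D_Y^{=1}$ contains the strict transforms of $S$ and $H$, whose intersection is the strict transform $\widetilde C$ of $C$, together with exceptional divisors over the nodes of $C$; crucially, over each node $p$ of $C$ the triple intersection structure produces a $0$-dimensional stratum (the node $p$ itself is an lc-center of $(\bP^3,S+H)$, being the intersection of the two branches of $C = S\cap H$), so $\dim\mathcal{D}(\bP^3,S+H)\geq 2 = \dim S\cap H\text{-strata count}$. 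On the other hand $\dim\mathcal{D}\leq \dim\bP^3 - 1 = 2$, forcing $\dim\mathcal{D}(\bP^3,S+H)=2$ and hence $\coreg(\bP^3,S+H)=3-2-1=0$. Equivalently and more concretely: a minimal lc-center is a point, namely any node of $C$, so the minimal dimension of a stratum of $D_Y^{=1}$ is $0$.

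The main obstacle I anticipate is the careful bookkeeping in the log resolution step: one must confirm that the du Val singularities of $S$ — which may lie on $C$ — do not obstruct either the lc property or the dimension count, and that blowing them up (or a partial resolution) does not accidentally create a lower-dimensional stratum elsewhere or fail to resolve $\widetilde C$. This is handled by working locally: a du Val point $q\in S$ not on $H$ contributes only to the combinatorics of $\mathcal{D}(S,\cdot)$ in a way that leaves a point-stratum over the nodes of $C$ untouched, and a du Val point lying on $C$ can be analyzed by first passing to the minimal resolution of $S$ inside a small resolution or weighted blowup of $\bP^3$, after which the transverse node structure of $C$ persists. Since the lemma is modeled on \cite[Lemma 4.2]{ALP24}, I would follow that argument's local model and simply record the two combinatorial configurations of $C$ that guarantee a $0$-dimensional stratum. $\qedhere$
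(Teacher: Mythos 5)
Your verification that $(\bP^3,S+H)$ is log canonical with $K_{\bP^3}+S+H\sim 0$ is essentially the paper's argument (inversion of adjunction on $H$ using that $S|_H$ is nodal, plus the canonical singularities of $S$ away from $H$), and that half is fine. The problem is in the coregularity half. Your key inference is: the node $p$ of $C=S\cap H$ is a $0$-dimensional lc-center of $(\bP^3,S+H)$, hence $\dim\mathcal{D}(\bP^3,S+H)\geq 2$ and $\coreg=0$. But coregularity is defined via the minimal dimension of a \emph{stratum of $D_Y^{=1}$ on an snc (or dlt) modification}, and lc-centers of $(X,D)$ are only the \emph{images} of such strata; the image can have strictly smaller dimension than the stratum. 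So a $0$-dimensional lc-center downstairs does not imply a $0$-dimensional stratum upstairs. A concrete counterexample to your inference: take $D\subset\bP^3$ a quartic with a single simple elliptic singularity (locally $x^3+y^3+z^3=0$) and otherwise smooth. Then $(\bP^3,D)$ is an lc log CY pair, the singular point is a $0$-dimensional lc-center (the ordinary blowup of the point has log discrepancy $3-3=0$), yet on that blowup $D_Y^{=1}=\widetilde{D}+E$ meets along a smooth elliptic curve, so the minimal stratum is a curve and $\coreg=1$, not $0$. Thus the step "node is an lc-center $\Rightarrow$ $0$-dimensional stratum" is a genuine gap, not mere bookkeeping.

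Your explicit resolution sketch does not close this gap either: blowing up a node $p$ of $C$ (with $S$ smooth there) produces an exceptional divisor of log discrepancy $3-2=1$, which therefore does \emph{not} enter $D_Y^{=1}$, and since $S$ and $H$ are tangent at $p$ their strict transforms still meet only along curves — so no "triple intersection" of coefficient-one divisors appears at that stage, and you never actually exhibit the required $0$-dimensional stratum. The paper's proof supplies exactly this missing step by adjunction along the strict transform $\widetilde{H}$: writing $D_{\widetilde H}=(D_Y-\widetilde H)|_{\widetilde H}$, the pair $(\widetilde H,D_{\widetilde H})$ is the log pullback of $(H,S|_H)$, which has coregularity $0$ (triangle of lines, or line plus transverse conic), so $D_{\widetilde H}^{=1}$ has a $0$-dimensional stratum; such a point lies on $\widetilde H$ and on two further coefficient-one components of $D_Y$, hence is a $0$-dimensional stratum of $D_Y^{=1}$. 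To repair your proof you should either reproduce this adjunction argument or carry out the local computation at a node far enough (e.g. the $(1,1,2)$-weighted blowup produces an lc place over $p$, and one must then check it meets both $\widetilde S$ and $\widetilde H$ in the snc model) — the one-blowup picture you describe is not sufficient.
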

\begin{proof}
Put $D=S+H$. 
First, we check that $(\mathbb{P}^3, D)$ is an log Calabi-Yau pair. 
The linear equvalence $K_{\mathbb{P}^3}+D\sim 0$ is obvious. 
By assumption, the pair $(H, S|_H)$ is lc. By inversion of adjunction,  $(\bP^3, D)$ is lc near $S\cap H$. Since $S$ has canonical singularities and $H$ is smooth, we know that the pair $(\bP^3, D)$ is lc. 
 
Let $f\colon Y\to \mathbb{P}^3$ be a log resolution of $(\mathbb{P}^3, D)$, and $(Y,  D_Y)$ the log pullback of $(\mathbb{P}^3, D)$. Note that $D_Y$ is a sub-boundary. 
To conclude that $\mathrm{coreg}(\bP^3, D)=0$, it suffices to check that $D^{=1}_Y$ admits a $0$-dimensional stratum. 

Let $\widetilde{H}$ be the strict preimage of $H$ via the map $f$. Put $D_{\widetilde{H}}=(D_Y-\widetilde{H})|_{\widetilde{H}}$. 
Note that the snc Calabi-Yau sub-pair $(\widetilde{H},D_{\widetilde{H}})$ is the log pullback of $(H, S|_H)$, where $H\simeq \mathbb{P}^2$ and $S|_H$ is either the union of three lines forming a triangle, or the union of a line and a conic intersecting transversally. Since $\mathrm{coreg}(H, S|_H)=0$, it follows that $\mathrm{coreg}(\widetilde{H},D_{\widetilde{H}})=0$. Thus, the snc divisor $D_{\widetilde{H}}^{=1}$ admits a zero-dimensional stratum. Hence, the divisor $D^{=1}_Y$ admits a zero-dimensional stratum as well. This shows that $\mathrm{coreg}(Y, D_Y)=0$, and so $\mathrm{coreg}(\mathbb{P}^3, D)=0$ as claimed.
\end{proof}
\subsection{Burnside groups}\label{sect:deficrepant}
We recall the definitions of several versions of Burnside groups introduced in \cite{KTs} and \cite{CLKT}, and introduce the definition of the {\em divisorial Burnside group}.

\begin{defi}
The \emph{Burnside group} $\mathrm{Burn}_n(\Bbbk)$ is a free abelian group generated by the $\Bbbk$-birational isomorphism  classes $[X]$ of $n$-dimensional (reduced and $\Bbbk$-irreducible) algebraic varieties $X$ over $\Bbbk$.
\end{defi}

\begin{defi}\label{defi:vol}
 Consider pairs
$
[X,\omega_X]
$ where
\begin{itemize}
    \item $X$ is an $n$-dimensional (reduced and $\Bbbk$-irreducible) smooth and proper algebraic variety over $\Bbbk$, and
    \item $\omega_X$ is a rational volume form  on $X$, i.e., $\omega_X\in \Omega^n_{\Bbbk(X)}$.
\end{itemize}
Two pairs $[X,\omega_X]$ and $[Y,\omega_Y]$ with volume forms $\omega_X$ and $\omega_Y$ are equivalent if there exists a birational map $\phi\colon X\dashrightarrow Y$ such that $\phi^*(\omega_Y)=\omega_X$. When such a map exists, we say $\phi$ is a \emph{volume preserving} birational map, and the pair $[Y,\omega_Y]$ is a {\em model} of $[X,\omega_X]$. The existence of a volume preserving map is also equivalent to the existence of a diagram 
\begin{equation}
\begin{tikzcd}
& \left[Z, \omega_Z\right] \ar[rd, "g"] \ar[dl, swap, "f"] & \ \\
\left[X, \omega_X\right] \ar[rr, dashed, "\phi"] & & \left[Y, \omega_Y\right]
\end{tikzcd}
\end{equation}
where $\phi$ is a birational map, $f$ and $g$ are birational contractions and 
\[
\omega_Z=f^*(\omega_X)=g^*(\omega_Y).
\]

\end{defi}

\begin{defi}\label{defi:logform}
The \emph{Burnside group of logarithmic volume forms} $\Burnf_n(\Bbbk)$ is a free abelian group generated by the equivalence classes of pairs $[X, \omega_X]$ where
\begin{itemize}
    \item $X$ is an $n$-dimensional (reduced and $\Bbbk$-irreducible) smooth and proper algebraic variety over $\Bbbk$, and
    \item $\omega_X$ is a {\em logarithmic} volume form on $X$, which means, $\omega_X\in\Omega^n_{\Bbbk(X)}$, and
for all proper smooth models $[Y,\omega_Y]$ of $[X,\omega_X]$ such that the divisor of zeros and poles of $\omega_Y$ has simple normal crossings, the rational differential form $\omega_Y$ has poles of order at most $1$.
\end{itemize}
\end{defi}

There is a natural forgetful homomorphism 
\begin{equation}\label{eq:forget}
  \varrho\colon\Burnf_n(\Bbbk)\to \Burn_n(\Bbbk)
\end{equation}
given by 
$$
[X, \omega_X]\mapsto [X],
$$ 
and a natural embedding  
\begin{equation}
\label{eq-iota}
\iota\colon \Burn_n(\Bbbk) \hookrightarrow\Burnf_n(\Bbbk)
\end{equation}
given by 
$$
[X]\mapsto [X, 0].
$$ 

\begin{defi}
\label{diag-crepant-equivalence}
Two sub-pairs $(X, D_X)$ and $(Y, D_Y)$ are \emph{crepant equivalent} if there exists the following diagram 
\begin{equation}
\begin{tikzcd}
& \left(Z, D_Z\right ) \ar[rd, "g"] \ar[dl, swap, "f"] & \ \\
\left(X, D_X\right ) \ar[rr, dashed, "\phi"] & & \left(Y, D_Y\right )
\end{tikzcd}
\end{equation}
where $\phi$ is a birational map, $f$ and $g$ are birational contractions, and   
\[
f_*D_Z = D_X, \quad \quad g_*D_Z = D_Y, \quad \quad K_Z+D_Z=f^*(K_X+D_X)=g^*(K_Y+D_Y).
\]
Under these conditions, we say $\phi$ is a {\em crepant birational map}, or a \emph{crepant equivalence}. 
\end{defi}

\begin{exam}
Consider two pairs $(X, D_X)$ and $(Y, D_Y)$ where $X$ and $Y$ are normal toric varieties of dimension $n$, and the divisors $D_X$ and $D_Y$ are sums of all torus-invariant divisors with coefficient $1$ on $X$ and $Y$, respectively. 
Then the two pairs are crepant equivalent lc pairs.
\end{exam}

Finally, we introduce the following definition.
\begin{defi}
The {\em divisorial Burnside group}  $\textbf{DivBurn}_n(\Bbbk)$ is a free abelian group generated by the crepant equivalence classes of lc sub-pairs $(X, D)$ where 
\begin{itemize}
    \item $X$ is a $n$-dimensional (reduced and $\Bbbk$-irreducible) smooth and proper algebraic variety over $\Bbbk$, and
    \item $D=\sum a_i D_i$ is a divisor with $a_i\in \mathbb{Z}$ (note that $a_i$ could be negative).
\end{itemize} 
Note that from the assumption that $(X,D)$ is lc, it follows that $a_i\leq 1$, and this holds for any model of $(X, D)$. 
\end{defi}

Let $\Burnf_{n}^{\ne 0}(\Bbbk)$ be the subgroup of $\Burnf_n(\Bbbk)$ generated by pairs $[X,\omega_X]$ such that $\omega_X\ne0$, and  $\Burnf_{n}^{= 0}(\Bbbk)$ the subgroup generated by pairs of the form $[X,0].$ We have  
$$
\Burnf_n(\Bbbk)=\Burnf_{n}^{\ne 0}(\Bbbk)\oplus\Burnf_{n}^{= 0}(\Bbbk).
$$
Note that $\iota(\Burn_n(\Bbbk))=\Burnf_{n}^{= 0}(\Bbbk)$, where $\iota$ is as in \eqref{eq-iota}. 
Consider the map
\begin{equation}
\label{eq-map-delta}
\delta_n\colon \Burnf_n^{\neq 0}(\Bbbk)\to \textbf{DivBurn}_n(\Bbbk)
\end{equation}
given by
$$
[X, \omega_X] \mapsto (X, -\mathrm{div}(\omega_X)),
$$
where $\mathrm{div}(\omega_X)$ is the divisor of zeroes and poles of $\omega_X$. 
Note that the pair $(X, -\mathrm{div}(\omega_X))$ is lc  by the assumption on the poles of $\omega_X$. 
It follows that the pair $(X, -\mathrm{div}(\omega_X))$ is a log Calabi-Yau sub-pair. 

Vice versa, given a log Calabi-Yau sub-pair $(X, D)$, we have $K_X\sim -D$. So there exists a rational logarithmic volume form on $X$ whose divisor of zeroes and poles is $-D$. This proves the following.

\begin{prop}\label{prop:lcy}
The image of the $\delta_n$ map \eqref{eq-map-delta} in $\mathbf{DivBurn}_n(\Bbbk)$ coincides with the subgroup generated by log Calabi-Yau sub-pairs.
\end{prop}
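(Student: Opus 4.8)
\textbf{Proof plan for Proposition \ref{prop:lcy}.}

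The plan is to prove the two inclusions separately. First I would show that the image of $\delta_n$ lands inside the subgroup generated by log Calabi-Yau sub-pairs. Take a generator $[X,\omega_X]$ of $\Burnf_n^{\ne0}(\Bbbk)$, so $\omega_X\ne 0$ is a rational logarithmic volume form on a smooth proper $X$. Its image is $(X,-\mathrm{div}(\omega_X))$. Since $\omega_X$ is a rational section of $\Omega_X^n=\cO_X(K_X)$, the divisor $\mathrm{div}(\omega_X)$ is in the linear system $|K_X|$ (as a $\bZ$-divisor, up to linear equivalence), hence $K_X+(-\mathrm{div}(\omega_X))\sim 0$; this is the linear triviality required. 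The divisor $-\mathrm{div}(\omega_X)$ has integral coefficients by construction. The only nontrivial point is that the pair $(X,-\mathrm{div}(\omega_X))$ is lc: this is exactly where the logarithmic hypothesis in Definition \ref{defi:logform} is used. On any snc model $[Y,\omega_Y]$ of $[X,\omega_X]$, the form $\omega_Y$ has poles of order at most $1$, which means every coefficient of $-\mathrm{div}(\omega_Y)$ is at most $1$; so every log discrepancy $a(E,X,-\mathrm{div}(\omega_X))$ computed on such a model is $\ge 0$, i.e.\ the sub-pair is lc. One should also remark that $\delta_n$ is well-defined on equivalence classes, since a volume preserving birational map $\phi\colon X\dashrightarrow Y$ with $\phi^*\omega_Y=\omega_X$ gives, via a common smooth model $Z$ with $\omega_Z=f^*\omega_X=g^*\omega_Y$, a crepant equivalence between $(X,-\mathrm{div}(\omega_X))$ and $(Y,-\mathrm{div}(\omega_Y))$ — indeed $K_Z-\mathrm{div}(\omega_Z)=f^*(K_X-\mathrm{div}(\omega_X))=g^*(K_Y-\mathrm{div}(\omega_Y))$ and the pushforwards match. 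Thus the image is contained in the span of log Calabi-Yau sub-pairs.

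Conversely I would show every log Calabi-Yau sub-pair class lies in the image. Let $(X,D)$ be an lc log Calabi-Yau sub-pair with $X$ smooth proper; by the definition recalled in Section \ref{sec-CY-coreg} we have $K_X+D\sim 0$, i.e.\ $K_X\sim -D$. Fix a rational function realizing this linear equivalence: concretely, $\cO_X(K_X)$ has a rational section $\omega$ (any nonzero rational volume form), and $\mathrm{div}(\omega)+D\sim 0$, so there is $h\in\Bbbk(X)^\times$ with $\mathrm{div}(h)=\mathrm{div}(\omega)+D$; then $\omega_X:=h^{-1}\omega$ is a rational volume form with $\mathrm{div}(\omega_X)=-D$. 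Since $D$ has integral coefficients, $\omega_X$ is a genuine rational section with divisor $-D$. Because $(X,D)$ is lc, on every snc model the transform of $D$ has coefficients $\le 1$, so the poles of $\omega_X$ on every such model have order at most $1$; hence $\omega_X$ is a logarithmic volume form in the sense of Definition \ref{defi:logform}, and $\omega_X\ne 0$, so $[X,\omega_X]\in\Burnf_n^{\ne0}(\Bbbk)$. By construction $\delta_n([X,\omega_X])=(X,-\mathrm{div}(\omega_X))=(X,D)$. This shows the image contains every log Calabi-Yau sub-pair, completing the proof.

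The main obstacle, such as it is, is purely bookkeeping: matching the two notions of ``logarithmic'' (order-one poles on snc models) with ``lc'' (log discrepancies $\ge 0$) and checking this is stable under passing between models and under the crepant-equivalence relation, so that $\delta_n$ is well-defined and the correspondence descends to the free abelian groups. There is no hard geometry here; one only needs that a log resolution exists (characteristic $0$) and that coefficients of strict transforms and discrepancies transform the way the definitions say. I would phrase the argument at the level of generators and then note that both maps in question are defined on generators and extend linearly, so the equality of images follows from the generator-wise statements above.
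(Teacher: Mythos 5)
Your proposal is correct and follows essentially the same route as the paper, whose proof consists of exactly these two observations: the logarithmic (order-one pole) condition on snc models translates into the lc condition for $(X,-\mathrm{div}(\omega_X))$ together with $K_X-\mathrm{div}(\omega_X)\sim 0$, and conversely $K_X\sim -D$ for an lc log CY sub-pair produces a logarithmic form with divisor $-D$. Your additional checks (well-definedness of $\delta_n$ under volume preserving equivalence and the generator-wise reduction) are sound elaborations of what the paper leaves implicit.
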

 
Note that $\delta_n$ is not surjective even for $n=1$, since not any lc sub-pair is Calabi-Yau. Clearly, $\delta_n$ is not injective as well, because multiplying a volume form by a non-zero constant does not change its divisor of zeroes and poles. Also, $\delta_n$ cannot be naturally extended to $\Burnf^{=0}(\Bbbk)$ as the following example shows.

\begin{exam}

Let $f\colon \mathbb{F}_1\to \bP^2$ be the blow up of a point on $\bP^2$. Denote by $E$ the $f$-exceptional divisor. 
Then the pair $[\bP^2, 0]$ is equivalent to the pair $[\mathbb{F}_1, 0]$ in $\Burnf_2(\Bbbk)$. 
On the other hand, the pair $(\bP^2, 0)$ is equivalent to the pair $(\mathbb{F}_1, -E)$ in $\textbf{DivBurn}_2(\Bbbk)$. One can check that the latter pair is not equivalent to $(\mathbb{F}_1, 0)$.
\end{exam}

\subsection{Pluri-canonical representation}
Let $[X, \omega]\in\Burnf_n(\Bbbk)$ with  $\omega\neq 0$. We denote by $\Bir_\Bbbk(X,\omega)$ the group of volume preserving birational automorphisms over $\Bbbk$
$$
[X,\omega]\dashrightarrow[X,\omega].
$$
Consider the associated log CY sub-pair $(X, D)$ with $D=-\mathrm{div}(\omega)$. Assume that $(X, D)$ is a log CY pair, so that $K_X+D\sim 0$ and $D\geq 0$. Let 
    $\Bir_\Bbbk(X, D)$ be the group of crepant birational automorphisms over $\Bbbk$ of $(X,D)\dashrightarrow(X,D)$.
    
Here we explain how the groups $\Bir_{\Bbbk}(X, \omega)$ and $\Bir_{\Bbbk}(X, D)$ are related. First, note that a birational automorphism can act on a logarithmic volume form non-trivially while preserving the corresponding divisor of its zeroes and poles:


\begin{exam}
Let $\sigma_n\colon \bP^n\dashrightarrow\bP^n$ be the Cremona involution given by the formula 
\[
(x_0:\ldots: x_n)\mapsto \left(\frac{1}{x_0}:\ldots:\frac{1}{x_n}\right).
\]
Then $\sigma_n$ acts on the standard torus invariant volume form $\omega_n$ on $\bP^n$ via multiplication by $(-1)^n$, and preserves the divisor of its zeroes and poles.
\end{exam}

For a log Calabi-Yau pair $(X, D)$, there exists a (unique up to a scalar) logarithmic volume form that corresponds to a non-zero element in $\rH^0(X, \cO_X(K_X+D))$. The group $\Bir_{\Bbbk}(X, D)$ acts linearly on the $1$-dimensional vector space $\rH^0(X, \cO_X(K_X+D))$, cf. \cite[2.14]{HaXu}. This gives rise to a representation 
\[
\rho = \rho_{(X, D)}\colon \Bir_{\Bbbk}(X, D) \to \mathsf{GL}(\rH^0(X, \cO_X(K_X+D)))
\]
called the \emph{(pluri-)canonical representation}\,\footnote{In fact, this map is defined in a more general setting when $m(K_X+D)\sim 0$ for some $m>0$, and then $\mathrm{Bir}_{\Bbbk}(X, D)$ acts by linear automorphisms on the space of pluri-forms $\mathrm{H}^0(X, \cO_X(m(K_X+D)))$.} of $\Bir_{\Bbbk}(X, D)$. 
The following result is a generalization of the classical theorem due to Ueno and Deligne:
\begin{theo}[{\cite[Theorem 1.1]{FG14}, \cite[Theorem 1.2]{HaXu}}]
\label{theo:fujino}
Let $(X, D)$ be a log CY pair. Then the image of the pluri-canonical representation $\rho_{(X, D)}(\Bir_{\Bbbk}(X, D))$ 
is finite.
\end{theo}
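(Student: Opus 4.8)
The plan is to prove the theorem by induction on $n=\dim X$, passing by adjunction from a log Calabi--Yau pair to one of its boundary components, with the classical Ueno--Deligne finiteness theorem as the base case.

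First I would reduce to a convenient model. The group $\Bir_\Bbbk(X,D)$ and the representation $\rho_{(X,D)}$ depend only on the crepant equivalence class of $(X,D)$: for a crepant equivalence as in Definition~\ref{diag-crepant-equivalence}, the equalities $f^{*}(K_X+D_X)=K_Z+D_Z=g^{*}(K_Y+D_Y)$ of divisor classes linearly equivalent to $0$ canonically identify the one-dimensional spaces $\rH^0(X,\cO_X(K_X+D_X))$, $\rH^0(Z,\cO_Z(K_Z+D_Z))$ and $\rH^0(Y,\cO_Y(K_Y+D_Y))$ and intertwine the canonical representations. Hence we may replace $(X,D)$ by a $\bQ$-factorial dlt modification, so that $D=\lfloor D\rfloor$ is reduced. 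If $D=0$, then $X$ is a klt Calabi--Yau variety; by a theorem of Kawamata $K_X$ is torsion, so $mK_X\sim 0$ for some $m>0$, and the image of $\Bir_\Bbbk(X)$ acting on the line $\rH^0(X,\cO_X(mK_X))$ is finite by the theorem of Ueno and Deligne applied to a smooth projective model of $X$. This settles the base of the induction.

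Now assume $D\neq 0$. A crepant birational self-map of $(X,D)$ preserves log discrepancies, hence sends log canonical centers to log canonical centers and respects inclusions among them; consequently $\Bir_\Bbbk(X,D)$ permutes the finitely many maximal log canonical centers, which for a dlt pair are precisely the prime components of $\lfloor D\rfloor$. I would fix such a component $S$ and pass to its stabilizer $G\leq\Bir_\Bbbk(X,D)$, a subgroup of finite index; since a finite-index subgroup has finite image under $\rho_{(X,D)}$ exactly when the whole group does, it is enough to bound $\rho_{(X,D)}(G)$. By adjunction $(K_X+D)|_S\sim K_S+D_S$ with $(S,D_S)$ a dlt log Calabi--Yau pair and $\dim S<\dim X$. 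The logarithmic volume form $\omega$ with $\mathrm{div}(\omega)=-D$ generates $\rH^0(X,\cO_X(K_X+D))$, and its Poincaré residue $\res_S\omega$ generates $\rH^0(S,\cO_S(K_S+D_S))$. Each $\phi\in G$ restricts to $\phi|_S\in\Bir_\Bbbk(S,D_S)$, and since the Poincaré residue commutes with pull-back along $\phi$ (this is the delicate point, discussed below),
\[
\rho_{(X,D)}(\phi)\cdot\res_S\omega=\res_S(\phi^{*}\omega)=(\phi|_S)^{*}(\res_S\omega)=\rho_{(S,D_S)}(\phi|_S)\cdot\res_S\omega,
\]
so $\rho_{(X,D)}(\phi)=\rho_{(S,D_S)}(\phi|_S)$. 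By the inductive hypothesis $\rho_{(S,D_S)}$ has finite image, hence so does $\rho_{(X,D)}|_G$, and therefore $\rho_{(X,D)}(\Bir_\Bbbk(X,D))$ is finite.

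The main obstacle is that $\phi$ is merely a birational self-map: to make sense of the restriction $\phi|_S$ and to check that adjunction is compatible with $\phi$, one must pass to a common model on which $\phi$ and $\phi^{-1}$ are morphisms near the generic point of the strict transform of $S$, and verify that the relevant crepant models and dlt structures behave functorially (and, had one instead reduced directly to a minimal log canonical center of lower dimension, one would also have to track the finite sign character coming from the iterated residue, i.e., from the action on the dual complex). This is exactly where the technology of dlt modifications, log canonical closures, and the PL-action of $\Bir_\Bbbk$ of a dlt pair on its dual complex enters; the klt base case in turn rests on the nontrivial Ueno--Deligne theorem. Alternatively, one may simply invoke \cite{FG14,HaXu}, as in the statement.
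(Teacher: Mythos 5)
The paper does not prove this statement at all: it is quoted directly from \cite{FG14} and \cite{HaXu}, so the only thing to compare your sketch against is the strategy of those references, which your proposal loosely imitates (dlt modification, reduction to the boundary by adjunction/residues, Ueno--Deligne as the klt base case).

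There is, however, a genuine gap in your induction step. You assert that a crepant birational self-map of a dlt log CY pair ``sends log canonical centers to log canonical centers and respects inclusions among them; consequently $\Bir_\Bbbk(X,D)$ permutes the finitely many maximal lc centers,'' i.e.\ the components of $\lfloor D\rfloor$, and that the finite-index stabilizer of a component $S$ restricts to $\Bir_\Bbbk(S,D_S)$. This is false: a crepant self-map may \emph{contract} boundary components. The standard Cremona involution of $(\bP^2,\{xyz=0\})$ (or of $(\bP^n,\Delta_n)$) is crepant, yet it contracts every coordinate hyperplane to a lower-dimensional stratum; a divisorial lc center can be sent to a minimal one, so no component is mapped birationally onto a component, the ``permutation of maximal lc centers'' does not exist, and $\phi|_S$ is simply undefined. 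Consequently the key identity $\rho_{(X,D)}(\phi)=\rho_{(S,D_S)}(\phi|_S)$ has no meaning for such $\phi$, and passing to a finite-index subgroup cannot repair this, since the problematic maps (e.g.\ the Cremona involution and its powers) contract divisors for every nontrivial power that is not pseudo-regular on the given model. Your closing paragraph treats the difficulty as one of choosing a common model on which $\phi$ is a morphism near the generic point of $S$, but the obstruction is more fundamental than functoriality of residues: the induced map on a single component need not exist at all.

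This is precisely why the actual proofs in \cite{FG14} and \cite{HaXu} do not induct by restricting to one boundary component. They work with the whole boundary $\lfloor D\rfloor$ equipped with its semi-dlt structure, use Fujino's lemma that a B-birational map of dlt pairs induces (via a common log resolution, not via naive restriction) a B-birational map of the non-normal pairs $(\lfloor D\rfloor,\mathrm{Diff})$, and then prove finiteness of the resulting ``B-pluricanonical representations'' for these reducible pairs, combining this with a separate (and in their generality quite hard) treatment of the klt case. If you want a complete argument along your lines, you would need to replace your component-wise restriction by this machinery, or else restrict the action to the one-dimensional space $\rH^0(\lfloor D\rfloor,\cO(K_{\lfloor D\rfloor}+\mathrm{Diff}))$-type data attached to the whole boundary rather than to a single stratum. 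As written, the proposal does not establish the theorem; invoking \cite{FG14,HaXu}, as the paper does, remains the honest option.
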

Now let $[X,\omega]\in \Burnf_n(\Bbbk)$, and let $(X, D)$ be a log CY sub-pair such that $D=-\mathrm{div}(\omega)$. 
Assume that $D$ is effective. 
Then it follows from Theorem~\ref{theo:fujino} that we have an exact sequence 
\[
1\to \Bir_{\Bbbk}(X, \omega) \to \Bir_{\Bbbk}(X, D)\to \mathrm{Im}(\rho) \to 1
\]
where $\mathrm{Im}(\rho)$ is a finite group. This implies the following corollary.

\begin{coro}\label{coro:volumepreserve}
Let $[X,\omega]$ be a pair in $\Burnf_n(\Bbbk)$ and put $D=-\mathrm{div}(\omega)$. Assume that $(X, D)$ is a log Calabi-Yau pair, in particular, $D$ is effective.
Then for any $\psi\in \Bir_{\Bbbk}(X, D)$, there exists some positive integer $N$ such that $\psi^N\in \Bir_{\Bbbk}(X, \omega)$.
\end{coro}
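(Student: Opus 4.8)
The plan is to deduce Corollary~\ref{coro:volumepreserve} directly from the exact sequence
\[
1\to \Bir_{\Bbbk}(X, \omega) \to \Bir_{\Bbbk}(X, D)\xrightarrow{\ \rho\ } \mathrm{Im}(\rho) \to 1
\]
established just before the statement, together with the finiteness of $\mathrm{Im}(\rho)$ from Theorem~\ref{theo:fujino}. First I would recall why this sequence is the right one: an element $\psi\in\Bir_{\Bbbk}(X,D)$ is a crepant birational automorphism, hence it pulls back the (unique up to scalar) logarithmic volume form $\omega\in\rH^0(X,\cO_X(K_X+D))$ to a scalar multiple $\rho(\psi)\cdot\omega$; and $\psi$ lies in $\Bir_{\Bbbk}(X,\omega)$, i.e.\ is genuinely volume preserving, precisely when that scalar is $1$, which is the kernel of $\rho$.

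The argument is then a one-line group-theoretic observation. Since $\mathrm{Im}(\rho)$ is finite by Theorem~\ref{theo:fujino}, set $N=|\mathrm{Im}(\rho)|$ (or more sharply, the order of $\rho(\psi)$ in this finite group). Because $\rho$ is a homomorphism, $\rho(\psi^N)=\rho(\psi)^N=1$, so $\psi^N\in\Ker(\rho)=\Bir_{\Bbbk}(X,\omega)$, which is exactly the assertion. One should note that the hypothesis that $(X,D)$ is a log Calabi-Yau \emph{pair} (so $D=-\mathrm{div}(\omega)\geq 0$) is used only to invoke Theorem~\ref{theo:fujino} and to make sense of $\rho$; no further input is needed.

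There is essentially no obstacle here: the corollary is a formal consequence of finiteness of the canonical representation's image, and all the substantive work has already been done in proving Theorem~\ref{theo:fujino} and in identifying the kernel of $\rho$ with $\Bir_{\Bbbk}(X,\omega)$. The only point deserving a word of care is that $\rho(\psi)$ is indeed well defined independently of the chosen representative of the birational class and of the scaling of $\omega$ (scaling $\omega$ conjugates $\rho$ by a scalar, hence does not change whether $\rho(\psi)=1$), but this is immediate from linearity of the action on the one-dimensional space $\rH^0(X,\cO_X(K_X+D))$. I would therefore present the proof in two sentences: take $N$ to be the order of $\rho(\psi)$ in the finite group $\mathrm{Im}(\rho)$, and conclude $\psi^N\in\Bir_{\Bbbk}(X,\omega)$ from $\rho(\psi^N)=\rho(\psi)^N=1$.
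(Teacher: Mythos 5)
Your proposal is correct and is exactly the paper's argument: the corollary is stated as an immediate consequence of the exact sequence $1\to \Bir_{\Bbbk}(X,\omega)\to \Bir_{\Bbbk}(X,D)\to \mathrm{Im}(\rho)\to 1$ with $\mathrm{Im}(\rho)$ finite by Theorem~\ref{theo:fujino}, and taking $N$ to be the order of $\rho(\psi)$ gives $\psi^N\in\Ker(\rho)=\Bir_{\Bbbk}(X,\omega)$. No gap; your extra remarks on well-definedness of $\rho$ under scaling of $\omega$ are harmless and consistent with the paper.
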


Theorem~\ref{theo:fujino} and Corollary~\ref{coro:volumepreserve} show that for log Calabi-Yau pairs, the notions of crepant birational automorphisms and volume preserving maps are equivalent up to raising the automorphism to some finite power. 
We use this fact in Section~\ref{sect:ellip} and ~\ref{sect:K3}. 

\subsection{Burnside rings and stable equivalence}\label{sect:stablerat}
There exists a ring structure on the Burnside group 
\[
\Burnf(\Bbbk)=\bigoplus_{n\geq 0}\Burnf_n(\Bbbk)
\]
with the multiplication given by the following: for $[X, \omega_X]$ and $[Y, \omega_Y]$ in $\Burnf_n(\Bbbk)$ and $\Burnf_m(\Bbbk)$, we put
\[
[X, \omega_X]\cdot [Y, \omega_Y] = [X\times Y, \omega_X\wedge\omega_Y]. 
\]
Then we extend this definition by linearity. 
Similarly, we define a ring structure on 
\[
\textbf{DivBurn}(\Bbbk)=\bigoplus_{n\geq 0}\textbf{DivBurn}_n(\Bbbk)
\]
as follows. 
For $(X, D_X)$ and $(Y, D_Y)$ in $\textbf{DivBurn}_n(\Bbbk)$ and $\textbf{DivBurn}_m(\Bbbk)$, we put
\[
(X, D_X)\cdot (Y, D_Y) = (X\times Y, \pi_1^* D_X + \pi_2^* D_Y)
\]
where $\pi_1\colon X\times Y \to X$ and $\pi_2\colon X\times Y \to Y$ are the canonical projections to the first and second factor. 
Then we extend this definition by linearity. 
Note 
that the map $\delta=\sum_{n\geq 0} \delta_n$ where $\delta_n$ is as in \eqref{eq-map-delta} becomes a ring homomorphism  
\[
\delta\colon \Burnf^{\neq 0}(\Bbbk)=\bigoplus_{n\geq 0}\Burnf^{\neq 0}_n(\Bbbk)\to \textbf{DivBurn}(\Bbbk).
\] 
In light of this, we define a map 
$$
j_{n,r}:\dBurn_{n}(\Bbbk)\to\dBurn_{n+r}(\Bbbk)
$$
given by 
$$
(X,D)\mapsto (X\times\bP^r, \pi_1^* D+ \pi_2^*\Delta_r)
$$
where $\Delta_r=\{x_1x_2\cdots x_{r+1}=0\}\subset\bP^r_{x_1,\ldots,x_{r+1}}$ is the standard toric boundary on $\bP^r$.   Given two pairs 
$$
(X,D_X)\in\dBurn_{n}(\Bbbk)\quad\text{and}\quad (Y,D_Y)\in\dBurn_{m}(\Bbbk),
$$ we say they are {\em stably crepant birational} if there exists $r_1,r_2\in\bN$ such that 
$$
j_{n,r_1}(X,D_X)=j_{m,r_2}(Y,D_Y).
$$
Nontrivial stable birationalities between algebraic varieties have been exhibited in \cite{BCTSWDrat}. It would be interesting to study nontrivial stable crepant birationalities of pairs. We propose the following:
\begin{ques}
Do there exist pairs $(X, D_X)$ and $(Y, D_Y)$ of coregularity $0$ such that they are not crepant birational, but stably crepant birational?
\end{ques}
Note that coregularity of a pair is invariant under crepant birational maps, and if $\mathrm{coreg}(X, D)=0$ and $\dim X=n$, then $\mathrm{coreg}\,j_{n,r}(X,D)=0$ for any $r\geq0$. 

\section{Invariants of birational maps}
\label{subsec-birational-invariants}
In \cite{SL}, for any birational map $\phi\colon X\dashrightarrow Y$ over $\Bbbk$ with $\dim(X)=n$, an invariant~$c(\phi)$ taking values in $\Burn_{n-1}(\Bbbk)$ is defined as follows. 
Consider a resolution of the map $\phi$:
\begin{equation*}
\begin{tikzcd}
& Z \ar[rd, "g"] \ar[dl, swap, "f"] & \ \\
X \ar[rr, dashed, "\phi"] & & Y
\end{tikzcd}
\end{equation*}
Then put
\[
{c}(\phi)  = \sum_i[E_i] - \sum_j[F_j]\in \Burn_{n-1}(\Bbbk)
\]
where the sum $\sum_i [E_i]$ runs over $\Bbbk$-irreducible components of the $f$-exceptional locus $\mathrm{Exc}(f)$, the sum $\sum_j [F_j]$ runs over $\Bbbk$-irreducible components of the $g$-exceptional locus $\mathrm{Exc}(g)$. One can  check that $c(\phi)$ does not depend on the choice of a resolution, and $c$ induces a group homomorphism \cite[Lemma 2.2]{SL}
$$
c\colon\Bir_\Bbbk(X)\to\Burn_{n-1}(\Bbbk).
$$

A similar invariant $\mathbf c$ is defined for volume preserving birational maps in \cite{CLKT}. Let $[X, \omega_X], [Y, \omega_Y]$ be two representatives of an equivalence class in $\Burnf_n(\Bbbk)$, and
$$
\phi\colon [X, \omega_X]\dashrightarrow [Y, \omega_Y]
$$
a volume preserving birational map. 
Then there exists a resolution
\begin{equation}
\label{diagram-c-bir}
\begin{tikzcd}
& \left[Z, \omega_Z\right] \ar[rd, "g"] \ar[dl, swap, "f"] & \ \\
\left[X, \omega_X\right] \ar[rr, dashed, "\phi"] & & \left[Y, \omega_Y\right]
\end{tikzcd}
\end{equation}
where $f$ and $g$ are birational contractions and 
\[
\omega_Z=f^*(\omega_X)=g^*(\omega_Y).
\]
An invariant $\mathbf{c}(\phi)\in\Burnf_{n-1}(\Bbbk)$ is associated to $\phi$ as follows. 
Let $\cup_{i\in I} E_i$ be the union of $\Bbbk$-irreducible components of the $f$-exceptional locus $\mathrm{Exc}(f)$, and $\cup_{j\in J} F_j$ the union of $\Bbbk$-irreducible components of the $g$-exceptional locus $\mathrm{Exc}(g)$. We put 
\[
\mathbf{c}(\phi) =  \sum_{i\in I}[E_i, \rho_{E_i}(\omega_Z)] - \sum_{j\in J}[F_j, \rho_{F_j}(\omega_Z)]\in\Burnf_{n-1}(\Bbbk)
\]
where  $\rho$ is the residue map defined in \cite[Section 4]{CLKT}.
In \cite{CLKT}, it is proven that $\mathbf{c}(\phi)$ does not depend on the choice of a resolution $Z$. 



Let $[X, \omega_X]\in\Burnf_n(\Bbbk)$. The induced map 
$$
\mathbf{c}\colon \Bir_{\Bbbk}(X, \omega_X)\to \Burnf_{n-1}(\Bbbk)
$$
is a group homomorphism \cite[Corollary 7.6]{CLKT}. In the next lemma, we show that the map $\mathbf{c}$ on $\Bir_{\Bbbk}(X, \omega_X)$ essentially coincides with the map $c$ from \cite{SL}, so the presence of volume forms does not give much new information. 

\begin{lemm}\label{lemm:discr}
Let $[X, \omega_X]\in\Burnf_n(\Bbbk)$. Then the image $\mathbf c(\Bir_\Bbbk(X,\omega_X))$ is contained in the group $\Burnf^{=0}_{n-1}(\Bbbk)$.
Moreover, we have a commutative diagram
\begin{equation}
    \begin{tikzcd}
\Bir_\Bbbk(X,\omega_X)\ar[r,"\mathbf{c}"]\arrow[d]&\Burnf_{n-1}(\Bbbk)\\
          \Bir_\Bbbk(X)\ar[r,"c"]&\ar[u,"\iota"]\Burn_{n-1}(\Bbbk)
    \end{tikzcd}
\end{equation}
where the left vertical arrow is the natural inclusion and $\iota$ is as in \eqref{eq-iota}. 
\end{lemm}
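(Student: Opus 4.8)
The plan is to evaluate $\mathbf{c}(\phi)$ divisor by divisor on a well-chosen resolution. Fix $\phi\in\Bir_\Bbbk(X,\omega_X)$; recall that this group is defined only for $\omega_X\neq 0$ and that $\phi^*\omega_X=\omega_X$ exactly. I would choose a resolution as in \eqref{diagram-c-bir} in which $Z$ is smooth, $f$ and $g$ are morphisms, and $\mathrm{div}(\omega_Z)$ has simple normal crossings, where $\omega_Z=f^*(\omega_X)=g^*(\omega_X)$; such a $Z$ exists (resolve the graph of $\phi$, then log resolve $\mathrm{div}(\omega_{Z_0})$), and both $c$ and $\mathbf{c}$ may be computed on it. Since $\omega_Z$ is logarithmic and $\mathrm{div}(\omega_Z)$ is snc on $Z$, for a prime divisor $E\subset Z$ the residue $\rho_E(\omega_Z)$ is a nonzero rational $(n-1)$-form exactly when $\omega_Z$ has a logarithmic pole along $E$, i.e.\ when $\mathrm{coeff}_E\big({-}\mathrm{div}(\omega_Z)\big)=1$ --- equivalently (since $-\mathrm{div}(\omega_Z)$ is the log pullback of $D_X:=-\mathrm{div}(\omega_X)$), exactly when $E$ is an lc place of the log Calabi--Yau sub-pair $(X,D_X)$. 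Let $P(Z)$ denote the finite set of such divisors on $Z$, and $P(X)$ the components of $D_X^{=1}$. Splitting each of the two sums in the definition of $\mathbf{c}(\phi)$ by whether the residue vanishes, and noting that $[E,\rho_E(\omega_Z)]=[E,0]\in\Burnf^{=0}_{n-1}(\Bbbk)$ when the residue is zero, the claim $\mathbf{c}(\phi)\in\Burnf^{=0}_{n-1}(\Bbbk)$ reduces to the identity
\[
\sum_{\substack{i\in I\\ \rho_{E_i}(\omega_Z)\neq 0}}[E_i,\rho_{E_i}(\omega_Z)]\;=\;\sum_{\substack{j\in J\\ \rho_{F_j}(\omega_Z)\neq 0}}[F_j,\rho_{F_j}(\omega_Z)]\quad\text{in }\Burnf^{\neq 0}_{n-1}(\Bbbk).
\]

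The key step is to express each side through its contraction. The $f$-exceptional divisors with nonzero residue are precisely the members of $P(Z)$ lying in $\mathrm{Exc}(f)$. Every remaining $E\in P(Z)$ is not $f$-exceptional, so $f$ restricts to an isomorphism near the generic point of $E$ onto that of the prime divisor $f(E)\subset X$; because $\omega_Z=f^*(\omega_X)$ this forces $f(E)\in P(X)$, the map $E\mapsto f(E)$ is a bijection between the non-$f$-exceptional part of $P(Z)$ and $P(X)$, and compatibility of the residue map with pullback along a morphism that is an isomorphism at the relevant generic point (\cite[Section 4]{CLKT}) gives $[E,\rho_E(\omega_Z)]=[f(E),\rho_{f(E)}(\omega_X)]$ in $\Burnf_{n-1}(\Bbbk)$. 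Hence
\[
\sum_{\substack{i\in I\\ \rho_{E_i}(\omega_Z)\neq 0}}[E_i,\rho_{E_i}(\omega_Z)]\;=\;\sum_{E\in P(Z)}[E,\rho_E(\omega_Z)]\;-\;\sum_{B\in P(X)}[B,\rho_B(\omega_X)].
\]
Running the identical argument with $g$ in place of $f$ --- which is legitimate precisely because the source and target of $\phi$ are the same pair $[X,\omega_X]$, so the correction term $\sum_{B\in P(X)}[B,\rho_B(\omega_X)]$ is literally the same --- produces the same right-hand side for the $g$-side sum. Subtracting, the two residue-carrying sums cancel, so $\mathbf{c}(\phi)\in\Burnf^{=0}_{n-1}(\Bbbk)$, which is the first assertion.

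For the commutative square, I would argue that $\Burnf^{=0}_{n-1}(\Bbbk)=\iota(\Burn_{n-1}(\Bbbk))$ and that the forgetful homomorphism $\varrho$ of \eqref{eq:forget} restricts on this summand to a two-sided inverse of $\iota$; since $\varrho(\mathbf{c}(\phi))=\sum_{i\in I}[E_i]-\sum_{j\in J}[F_j]=c(\phi)$ straight from the definitions, the first part gives $\mathbf{c}(\phi)=\iota\big(\varrho(\mathbf{c}(\phi))\big)=\iota(c(\phi))$, i.e.\ the square commutes, with the left vertical arrow the natural inclusion $\Bir_\Bbbk(X,\omega_X)\hookrightarrow\Bir_\Bbbk(X)$. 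The only point I expect to require genuine care is the residue-pullback compatibility $[E,\rho_E(\omega_Z)]=[f(E),\rho_{f(E)}(\omega_X)]$: it is intuitively clear because $f$ is an isomorphism near the generic point of $E$, but it has to be recorded at the level of the residue formalism of \cite{CLKT}. Everything else is bookkeeping, plus the structural observation that the cancellation genuinely uses that $\phi$ is an automorphism of a fixed pair rather than a map between two distinct pairs.
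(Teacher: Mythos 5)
Your proposal is correct, and it rests on the same two pillars as the paper's proof: the residue $\rho_E(\omega_Z)$ vanishes for every exceptional divisor with positive log discrepancy (since lc-ness of $(X,-\mathrm{div}(\omega_X))$ bounds pole orders by $1$), and residues commute with pullback, so each lc place on $Z$ that is not contracted has the same class in $\Burnf_{n-1}(\Bbbk)$ as the boundary component it maps to. Where you differ is in how the residue-carrying terms are cancelled. The paper argues divisor by divisor: for an $f$-exceptional, non-$g$-exceptional lc place $E$ it identifies $[E,\omega_E]$ with the class of the boundary component $g(E)$, then runs a counting argument (the number of divisors in $D_Z^{=1}$ with that class exceeds the number in $D^{=1}$, so some strict transform must be $g$-contracted) and cancels in pairs by induction. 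You instead prove the single bookkeeping identity $\sum_{P(Z)\cap\mathrm{Exc}(f)}[E,\rho_E(\omega_Z)]=\sum_{P(Z)}[E,\rho_E(\omega_Z)]-\sum_{P(X)}[B,\rho_B(\omega_X)]$, using that $E\mapsto f(E)$ bijects the non-exceptional lc places of $Z$ with the components of $D_X^{=1}$, and observe that the $g$-side sum equals the identical right-hand side because source and target are the same pair; subtraction then kills all residue-carrying terms at once. This is a cleaner repackaging: it avoids the multiplicity count and the induction, and it makes explicit exactly where the hypothesis that $\phi$ is an automorphism of a fixed $[X,\omega_X]$ (rather than a map between different pairs) is used. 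The commutativity of the square via $\varrho\circ\iota=\mathrm{id}$ and $\varrho(\mathbf{c}(\phi))=c(\phi)$ is the same as in the paper. Two minor remarks: the lemma as stated also allows $\omega_X=0$, which you dismiss by saying the group is only defined for nonzero forms --- in that case all residues vanish and the claim is immediate, so nothing is lost, but you should say so; and your insistence on an snc resolution is harmless but not needed, since lc-ness already gives pole order at most one along every divisor of any smooth model.
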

\begin{proof}

It suffices to show that 
\[
\mathbf{c}(\phi)=\sum_i\pm[E_i, 0]\in \Burnf_{n-1}^{=0}(\Bbbk)\subset \Burnf_{n-1}(\Bbbk)
\] 
for some divisors $E_i$ over $X$. 
The claim is straightforward if $\omega_X=0$. So we may assume that $\omega_X\neq 0$. 
For any element $\phi\in \Bir_{\Bbbk}(X,\omega_X)$, consider the diagram 
\begin{equation}
\begin{tikzcd}
& \left[Z, \omega_Z\right] \ar[rd, "g"] \ar[dl, swap, "f"] & \ \\
\left[X, \omega_X\right] \ar[rr, dashed, "\phi"] & & \left[X, \omega_X\right]
\end{tikzcd}
\end{equation}
where $f$ and $g$ are birational contractions and 
$
\omega_Z=f^*(\omega_X)=g^*(\omega_X).
$
Put $D=-\mathrm{div}(\omega_X)$ and $D_Z=-\mathrm{div}(\omega_Z)$. 
Note that $D$ and $D_Z$ are integral divisors with coefficients at most $1$. Observe that the pair $(X, D)$ is lc, cf. Proposition~\ref{prop:lcy},
and the pair $(Z, D_Z)$ is the log pullback of $(X, D)$. 

We show that only divisors $E$ on $Z$ with  $a(E, X, D)>0$ can make non-trivial contributions to $\mathbf{c}(\phi)$, and such divisors only contribute to elements in $\iota(\Burn_{n-1}(\Bbbk)).$
Let $E$ be a divisor on $Z$ which is either $f$- or $g$-exceptional. If $E$ is both $f$- and $g$-exceptional, then it does not contribute to $\mathbf{c}(\phi)$. Let us assume that $E$ is $f$-exceptional and not $g$-exceptional. Note that $a(E, X, D)=a(E,Z,D_Z)$, because the pairs $(X, D)$ and $(Z, D_Z)$ are crepant equivalent. 

When $a(E, X, D)>  0$, we have $\mathrm{coeff}_E D_Z\leq 0$. This means that $\omega_Z$ is regular at the generic point of $E$. 
By definition of the residue map \cite[Section 4]{CLKT}, this implies that $\rho_E (\omega_Z)=0$. Thus, the class $[E, 0]$ in $\mathbf{c}(\phi)$ belongs to \[
\iota(c(\phi))\subset \iota(\Burn_{n-1}(\Bbbk))=\Burnf_{n-1}^{=0}(\Bbbk).\]

When $a(E, X, D)=0$, we have $\mathrm{coeff}_E D_Z=1$. Put $\omega_E=\rho_E(\omega_Z)$. 
We show that the class $[E, \omega_E]$ is cancelled in $\mathbf{c}(\phi)$ by some other term. Indeed, since $E$ is not $g$-exceptional and $g_*(D_Z)=D$, it follows that $g(E)$ is a component of $D$ with coefficient~$1$. Put $g(E)=D_1$ and  $\omega_{D_1}=\rho_{D_1}(\omega_X)$.


From the definition of the residue map \cite[Section 4]{CLKT}, it follows that taking a residue of the form $\omega_X$ commutes with pullback: 
\begin{equation}
\label{eq-pullback-residue}
(g|_E)^*(\omega_{D_1})=(g|_{E})^*(\rho_{D_1}(\omega_X))=\rho_E(g^*(\omega_X)) = \rho_E(\omega_Z) = \omega_E. 
\end{equation}
It follows that $[E, \omega_E]=[D_1,\omega_{D_1}]$ in $\Burnf_{n-1}(\Bbbk)$. 

Let $D_1,\ldots, D_k$ be all the components of $D$ with coefficient $1$ such that $[D_i, \omega_{D_i}]=[D_1, \omega_{D_1}]$ in $\Burnf_{n-1}(\Bbbk)$ for $\omega_{D_i}=\rho_{D_i}(\omega_X)$. Consider their strict transforms $D'_i$ on $Z$ which belong to $D_Z$ with coefficient $1$. 
Put $\omega_{D'_i}=\rho_{D'_i}(\omega_Z)$. 
Similarly to \eqref{eq-pullback-residue}, we have $(f|_{D'_i})^*(\omega_{D_i})=\omega_{D'_i}$. Thus, all the classes $[D'_i, \omega_{D'_i}]=[E, \omega_E]$ coincide in $\Burnf_{n-1}(\Bbbk)$ for $1\leq i\leq k$. 

We have $g_*(D_Z)=D$, and $D^{=1}$ contains $k\geq 1$ divisors whose classes are equal to $[D_i,\omega_{D_i}]$. On the other hand, $D_Z^{=1}$ contains at least $k+1$ divisors with such classes, and at least one of them, $E_i$, is $f$-exceptional and not $g$-exceptional. 
It follows that at least one of $D'_i$, say $D'_2$, should be contracted by $g$. 
We conclude that the class $[E, \omega_E]$ is cancelled by $[D'_2,\omega_{D'_2}]$ in $\mathbf{c}(\phi)$. 

So we can cancel the terms $[E, \omega_E]-[D'_2,\omega_{D'_2}]$ from $\mathbf{c}(\phi)$.  
Proceeding to the next $f$-exceptional and not $g$-exceptional divisor and using induction, and then applying a similar argument to $g$-exceptional and not $f$-exceptional divisor, we conclude that 
\[
\mathbf{c}(\phi)=\sum_i\pm[E_i, 0]\in \Burnf_{n-1}(\Bbbk)
\] 
for some divisors $E_i$ on $Z$. 
Thus, the image $\mathbf{c}(\Bir_{\Bbbk}(X,\omega_X))$ is contained in $\iota(\Burn_{n-1}(\Bbbk))$. This completes the proof.
\end{proof}


\begin{coro}
\label{prop-reg0-vanishing} 
In each of the following cases, we have $\mathbf{c}(\Bir_{\Bbbk}(X, \omega_X))=0$:
\begin{itemize}
\item
$[X, \omega_X]\in \Burnf_2(\Bbbk)$ where $\Bbbk$ is arbitrary, or 
\item 
$[X, \omega_X]\in \Burnf_3(\Bbbk)$ where $\Bbbk$ is algebraically closed.
\end{itemize}
\end{coro}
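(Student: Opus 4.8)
The plan is to reduce the statement to the vanishing of the Lin--Shinder invariant $c$ by means of Lemma~\ref{lemm:discr}, and then to invoke the low-dimensional vanishing results for $c$. First I would record that, by Lemma~\ref{lemm:discr}, for every $[X,\omega_X]\in\Burnf_n(\Bbbk)$ and every $\phi\in\Bir_\Bbbk(X,\omega_X)$ one has $\mathbf c(\phi)=\iota\bigl(c(\phi)\bigr)$, where on the right $\phi$ is viewed in $\Bir_\Bbbk(X)$ through the inclusion $\Bir_\Bbbk(X,\omega_X)\hookrightarrow\Bir_\Bbbk(X)$ and $\iota$ is the embedding \eqref{eq-iota}. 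Since $\iota$ is injective, it then suffices to show that the homomorphism $c\colon\Bir_\Bbbk(X)\to\Burn_{n-1}(\Bbbk)$ is identically zero in the two cases listed; this immediately yields $\mathbf c(\Bir_\Bbbk(X,\omega_X))=\iota\bigl(c(\Bir_\Bbbk(X,\omega_X))\bigr)\subseteq\iota\bigl(c(\Bir_\Bbbk(X))\bigr)=0$.

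For the surface case ($n=2$, arbitrary $\Bbbk$) I would argue directly. Given $\phi\colon X\dashrightarrow X$ with resolution $f\colon Z\to X$ and $g\colon Z\to X$, both $f$ and $g$ are compositions of blow-ups of closed points, so every $f$- or $g$-exceptional prime divisor on $Z$ is isomorphic to $\bP^1_{\Bbbk(p)}$ for the residue field $\Bbbk(p)$ of the center it sits over, hence has class $[\bP^1_L]$ in $\Burn_1(\Bbbk)$ for a finite extension $L/\Bbbk$. After cancelling the divisors that are simultaneously $f$- and $g$-exceptional, $c(\phi)$ becomes $\sum_{[L]}\bigl(N_{\phi^{-1}}(L)-N_\phi(L)\bigr)[\bP^1_L]$, where $N_\psi(L)$ counts the prime divisors of $X$ contracted by $\psi$ whose birational type is $\bP^1_L$. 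Over $\overline{\Bbbk}$ all such curves are $\bP^1$ and the count is symmetric in $f$ and $g$, since $\#\{f\text{-exceptional}\}=\rho(Z)-\rho(X)=\#\{g\text{-exceptional}\}$; over a general $\Bbbk$ the same bookkeeping is run $\Gal(\overline{\Bbbk}/\Bbbk)$-equivariantly on $\Pic(Z_{\overline{\Bbbk}})$. This vanishing of $c$ in dimension $\le 2$ over any field is part of \cite{SL}, so in a final write-up I would just cite it.

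For the threefold case ($n=3$, $\Bbbk$ algebraically closed) I would invoke the theorem of Lin and Shinder that $c(\Bir_\Bbbk(X))=0$ \cite{SL}. Their argument factors an arbitrary birational self-map of a Mori threefold into elementary Sarkisov links; the flips and flops occurring inside a link are isomorphisms in codimension one and contribute nothing to $c$, so $c$ descends to the Sarkisov groupoid, and one then verifies link by link --- using the classification of Sarkisov links in dimension three --- that the divisor extracted by one divisorial contraction of a link and the divisor contracted by the other are birational, so that their contributions to $c$ cancel in $\Burn_2(\Bbbk)$. This last cancellation is the step I expect to be the main obstacle: unlike the surface case it is not a formal rank count, it relies genuinely on the structure of the Sarkisov program, and it is exactly what breaks for $n\ge 4$ or over arithmetically rich fields, where exceptional divisors can carry nontrivial birational invariants detecting $c$ --- which is the whole point of the paper. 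Granting this input, the corollary follows at once from the reduction in the first step.
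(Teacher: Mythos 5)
Your reduction is exactly the paper's proof: by Lemma~\ref{lemm:discr} one has $\mathbf c(\phi)=\iota(c(\phi))$, so it suffices that the Lin--Shinder invariant $c$ vanishes on $\Bir_\Bbbk(X)$ in dimension $2$ over any field and in dimension $3$ over an algebraically closed field, which the paper simply cites as \cite[Theorem 2.5, Proposition 2.6]{SL}. Your additional sketches of why $c$ vanishes (blow-up bookkeeping for surfaces, Sarkisov links for threefolds) are not needed and are only heuristic, but since you defer to the citation the argument is correct and coincides with the paper's.
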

\begin{proof}
By Lemma~\ref{lemm:discr}, it suffices to show that $c(\Bir_{\Bbbk}(X))=0$ under these assumptions. This follows from \cite[Theorem 2.5, Proposition 2.6]{SL}. 
\end{proof}

Abusing notation, we also define an invariant $\mathbf{c}$ for a crepant birational map
\[
\phi\colon (X, D_X)\dashrightarrow(Y, D_Y)
\]
in a similar way as above. Recall that by definition, there exists a diagram 
\begin{equation*}
\begin{tikzcd}
& \left(Z, D_Z\right ) \ar[rd, "g"] \ar[dl, swap, "f"] & \ \\
\left(X, D_X\right ) \ar[rr, dashed, "\phi"] & & \left(Y, D_Y\right )
\end{tikzcd}
\end{equation*}
where $\phi$ is a birational map, $f$ and $g$ are birational contractions, and   
\[
f_*D_Z = D_X, \quad \quad g_*D_Z = D_Y, \quad \quad K_Z+D_Z=f^*(K_X+D_X)=g^*(K_Y+D_Y).
\] 
Let $\cup_{i\in I} E_i$ be the union of irreducible components of $f$-exceptional divisors, and $\cup_{j\in J} F_j$ the union of irreducible components of $g$-exceptional divisors. We define 
\[
\mathbf{c}(\phi) =\sum_{i\in I}(E_i, D_{E_i}) - \sum_{j\in J}(F_j, D_{F_j})\in\mathbf{DivBurn}_{n-1}(\Bbbk),
\]
where $D_{E_i}$ is defined in the following way. We put
\[
D_{E_i} =\begin{cases}
    -K_{E_i} + (K_Z + D_Z)|_{E_i}&\text{if $E_i$ belongs to $D_Z$ with coefficient $1$,}\\
    0&\text{otherwise,}
\end{cases} 
\]
and $D_{F_j}$ is defined similarly. A similar proof of Lemma \ref{lemm:discr} shows the following.

\begin{lemm}\label{lemm:cdiv}
Let $(X,D_X)\in\mathbf{DivBurn}_{n-1}(\Bbbk)$, and $\phi\in \Bir_{\Bbbk}(X, D_X)$. Then $\mathbf{c}(\phi)$ belongs to the subgroup in $\mathbf{DivBurn}_{n-1}(\Bbbk)$ generated by pairs of the form $(E, 0)$ where $\dim E=n-1$. 
\end{lemm}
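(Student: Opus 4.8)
The plan is to mimic the proof of Lemma~\ref{lemm:discr}, transporting the argument from the Burnside group of logarithmic volume forms to the divisorial Burnside group. Given $\phi\in\Bir_\Bbbk(X,D_X)$, fix a resolution diagram $(Z,D_Z)\xrightarrow{f}(X,D_X)$, $(Z,D_Z)\xrightarrow{g}(X,D_X)$ with $f_*D_Z=g_*D_Z=D_X$ and $K_Z+D_Z=f^*(K_X+D_X)=g^*(K_X+D_X)$; so $(Z,D_Z)$ is crepant equivalent to $(X,D_X)$ and log discrepancies match: $a(E,X,D_X)=a(E,Z,D_Z)$ for every divisor $E$ on $Z$. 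A divisor $E$ contributing to $\mathbf c(\phi)$ must be $f$-exceptional-but-not-$g$-exceptional or vice versa (if it is exceptional for both it cancels). I will treat the first case; the second is symmetric.

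First I would dispose of the divisors $E$ with $a(E,X,D_X)>0$, i.e. $\mathrm{coeff}_E D_Z<1$. By the definition of $D_{E}$ in the statement, such $E$ contributes the class $(E,0)$, which already lies in the asserted subgroup generated by pairs $(E,0)$ with $\dim E=n-1$. So these terms are harmless. The substance is in the divisors $E$ with $a(E,X,D_X)=0$, i.e. $\mathrm{coeff}_E D_Z=1$; for these $D_{E}=-K_E+(K_Z+D_Z)|_E$ is the different/adjunction boundary, and I must show these terms all cancel in $\mathbf c(\phi)$. Since $E$ is not $g$-exceptional and $g_*D_Z=D_X$, the image $D_1:=g(E)$ is a component of $D_X$ with coefficient $1$, and by crepant adjunction the pair $(E,D_E)$ is crepant equivalent to $(D_1,D_{D_1})$ where $D_{D_1}=-K_{D_1}+(K_X+D_X)|_{D_1}$; hence $(E,D_E)=(D_1,D_{D_1})$ in $\dBurn_{n-1}(\Bbbk)$. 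The same applies to the strict transform $D_1'\subset Z$ of $D_1$, which appears in $D_Z$ with coefficient $1$ and satisfies $(D_1',D_{D_1'})=(D_1,D_{D_1})$.

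Now I run the counting argument of Lemma~\ref{lemm:discr}. Let $D_1,\dots,D_k$ be all coefficient-one components of $D_X$ whose adjunction pairs $(D_i,D_{D_i})$ are crepant equivalent to $(D_1,D_{D_1})$, and let $D_i'\subset Z$ be their strict transforms, each with $(D_i',D_{D_i'})=(D_1,D_{D_1})$. Then $D_X^{=1}$ contains exactly $k$ divisors in this crepant class, while $D_Z^{=1}$ contains the $k$ strict transforms $D_i'$ plus at least one more, namely our $f$-exceptional non-$g$-exceptional divisor $E$ with the same class. Since $g_*D_Z=D_X$ and $g$ is a birational contraction, counting divisors in this fixed crepant-equivalence class forces at least one of the $D_i'$, say $D_2'$, to be $g$-exceptional (and not $f$-exceptional, being a strict transform from $X$). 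Therefore $(D_2',D_{D_2'})$ appears in $\mathbf c(\phi)$ with a minus sign and equals $(E,D_E)$, cancelling it. Proceeding inductively through the remaining such $f$-exceptional divisors, and then symmetrically for $g$-exceptional-but-not-$f$-exceptional ones, all the adjunction-boundary terms cancel, leaving $\mathbf c(\phi)=\sum_i\pm(E_i,0)$ with $\dim E_i=n-1$, as claimed.

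The main obstacle is verifying that ``taking the different commutes with crepant pullback'': concretely that for the birational morphism $g|_E\colon E\to D_1$ one has a crepant-equivalence diagram between $(E,D_E)$ and $(D_1,D_{D_1})$ realizing the identity $(E,D_E)=(D_1,D_{D_1})$ in $\dBurn_{n-1}(\Bbbk)$ — this is the divisorial analogue of the residue-pullback identity \eqref{eq-pullback-residue} and requires the compatibility of adjunction with the crepant condition $K_Z+D_Z=g^*(K_X+D_X)$, restricted to $E$. Once that compatibility is in hand, together with the observation that the strata-counting is insensitive to whether we track residues of a volume form or adjunction boundaries (only the crepant-equivalence class of the stratum matters), the proof of Lemma~\ref{lemm:discr} transfers essentially verbatim.
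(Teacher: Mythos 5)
Your proposal is correct and follows exactly the route the paper intends: the paper proves this lemma simply by asserting that ``a similar proof of Lemma~\ref{lemm:discr} shows the following,'' and your adaptation is precisely that transfer, with the divisorial analogue of the residue--pullback identity \eqref{eq-pullback-residue} (compatibility of the adjunction boundary $D_E=-K_E+(K_Z+D_Z)|_E$ with the crepant condition restricted to $E$) correctly identified as the only point needing verification. The counting/cancellation argument you run for the coefficient-one components matches the paper's verbatim, so there is nothing further to add.
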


\begin{rema}\label{rema:samec}
    Note that we abuse the notation $\bc$ to refer to the invariants of both volume preserving and crepant birational automorphisms. This is due to the fact that both invariants can be essentially identified with the Lin-Shinder $c$-invariant of the underlying birational maps, as Lemma~\ref{lemm:discr} and Lemma~\ref{lemm:cdiv} imply. In other words, the presence of volume forms and boundaries does not provide much additional information in the invariants. This simplifies the computation of the $\mathbf c$-invariant of various crepant birational maps in Section~\ref{sect:ellip} and~\ref{sect:K3}.
\end{rema}

\begin{exam}[Pseudo-regularizable map]\label{exam:vanishpseudo}
   A birational map $\phi\in\Bir_\Bbbk(X)$ is called {\em pseudo-regularizable} if $\phi=\alpha^{-1}\circ \gamma\circ \alpha$ where 
    \begin{itemize}
        \item $\alpha:X\dashrightarrow X'$ is a birational map with some variety $X'$ over $\Bbbk$, and 
        \item $\gamma\in\Bir_\Bbbk(X')$ is an isomorphism in codimension 1.
    \end{itemize}
 In particular, pseudo-regularizable maps include regular automorphisms and birational automorphisms of finite order, see \cite[Example 4.2]{SL}. We call a volume preserving birational automorphism $\phi\in \Bir_\Bbbk(X,\omega)$ pseudo-regularizable if $\phi$ is pseudo-regularizable as an element in $\Bir_\Bbbk(X).$ For such maps $\phi,$ it is not hard to see that $c(\phi)=0$ since $c(\gamma)=0$ by definition. Then it follows from Lemma~\ref{lemm:discr} that $\mathbf{c}(\phi)=0.$
\end{exam}

\

\section{$\bP^3$ with quintic genus one curves}\label{sect:ellip}
In this section, we construct a crepant birational automorphism of the standard toric pair over certain field $\Bbbk$
$$
\sigma\colon (\bP^3,\Delta_3)\dashrightarrow (\bP^3,\Delta_3),\quad \Delta_3=\{x_1x_2x_3x_4=0\}\subset\bP^3_{x_1,\ldots,x_4}
$$
such that 
$$
0\ne \mathbf{c}(\sigma)\in \mathbf{DivBurn}_2(\Bbbk). 
$$
Our construction is a refinement of \cite[Proposition 3.6]{SL}, which relies on the classical quadro-cubic Cremona transformation \cite[Chapter VII, 5.1]{sempleroth}. We briefly recall their construction. 

Let $C$ be a quintic genus one curve, given by a scheme-theoretic intersection of five quadrics in $\bP^4$. Let $Q$ be a smooth quadric threefold in $\bP^4$ containing $C$. The linear system of quadrics in $\bP^4$ containing $C$ yields a birational map
\begin{align}\label{eqn:varphidefi}
\varphi:Q\dashrightarrow \bP^3.
\end{align}
The map $\varphi$ blows up $C$ and then blows down a divisor to a curve $C'$ in $\bP^3$. 
The curve $C'$ is isomorphic to the Jacobian $\mathrm{Jac}^2(C)$ of degree 2 line bundles on $C$. 
The inverse map $\varphi^{-1}$ is given by the linear system of cubics passing through $C'$. 
By \cite{SL}, $Q$ always contains a $\Bbbk$-line. Projection from a point on the line induces a birational map 
$$
\pi\colon Q\dashrightarrow\bP^3.
$$ 
The map $\varphi\circ\pi^{-1}$ is then a birational automorphism of $\bP^3.$  When $C$ has no $\Bbbk$-points, $C$ and $C'$ are not isomorphic, and one has
$$
0\ne c( \varphi\circ\pi^{-1})=[C\times\bP^1]-[\mathrm J^2(C)\times\bP^1]\in\Burn_2(\Bbbk).
$$

Now, the goal is to extend this map to a crepant birational map of some log CY pairs that admit toric models. 
\begin{prop}\label{prop:ellip}
   Let $\Bbbk$ be a field of one of the following:
   \begin{itemize}
       \item a number field;
       \item the function field of an algebraic variety over a number field, over a finite field, or over an algebraically closed field.
   \end{itemize}
    Let $C\subset\bP^4$ be a quintic genus one curve over $\Bbbk$ with no $\Bbbk$-rational points. Then there exist crepant birational maps $\varphi, \pi,\psi,\psi',\eta,\eta'$ and
   $$
\sigma:=\eta'\circ\psi'\circ\varphi\circ\pi^{-1}\circ\psi^{-1}\circ\eta^{-1}
   $$
   with the following commutative diagram
    \begin{equation}\label{dia:ellip}
\begin{tikzcd}
(C\subset Q, Q_1+H_1)\ar[rr, "\varphi", dashed]\ar[dd,"\pi",dashed]&  &(C'\subset\bP^3, H_2+S_1) \ar[dd ,"\psi'"',dashed] \\
\\
(\bP^3,S_2+H_3)\ar[dd,"\psi", dashed]&&(\bP^3,S_4+H_5)\ar[dd, "\eta'"', dashed]\\
\\
(\bP^3,H_4+S_3)\ar[rd, "\eta",dashed]
&&(\bP^3,\Delta_3)\\
&(\bP^3,\Delta_3)\ar[ru, "\sigma", dashed]&
\end{tikzcd}
\end{equation}
where
\begin{itemize}
    \item $Q$ is a smooth quadric threefold containing $C$,
    \item $C'$ is $\Bbbk$-isomorphic to $\mathrm{Jac}^2(C)$, which is a quintic genus one curve not $\Bbbk$-isomorphic to $C$,
    \item $Q_1$ is a smooth quartic del Pezzo surface,
    \item $H_1$ is a quadric surface cone, i.e., a cone over a smooth conic,
    \item $S_2$ and $S_1$ are cubic surfaces with one $\sA_1$-singularity,
    \item $S_3$ and $S_4$ are cones over the union of a line and a smooth conic intersecting transversally,
    \item $H_i$ are planes in $\bP^3$ for $i=2,3,4,5$,
    \item $(\bP^3,\Delta_3)$ is the standard toric pair.
\end{itemize}
In particular, $\sigma$ is a crepant birational automorphism of the standard 3-dimensional toric pair $(\bP^3,\Delta_3)$ with
$$
0\ne\mathbf c(\sigma)=\mathbf c(\varphi) \in \mathbf{DivBurn}_2(\Bbbk).
$$

\end{prop}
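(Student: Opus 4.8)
The plan is to promote the two Lin--Shinder birational maps of \cite[Proposition 3.6]{SL}, namely $\varphi\colon Q\dashrightarrow\bP^3$ (given by quadrics through $C$) and the projection $\pi\colon Q\dashrightarrow\bP^3$ from a point of a $\Bbbk$-line in $Q$, to \emph{crepant} birational maps of log Calabi--Yau pairs of coregularity $0$ by decorating $Q$ with a suitable anticanonical boundary, then to join the two resulting pairs on $\bP^3$ to the standard toric pair $(\bP^3,\Delta_3)$ by further crepant birational maps whose underlying Lin--Shinder $c$-invariant vanishes. Additivity of $\mathbf c$ then forces $\mathbf c(\sigma)=\mathbf c(\varphi)$, while $\mathbf c(\varphi)\ne 0$ is in essence the computation already in \cite{SL}.

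\emph{The boundary on $Q$ and the maps $\varphi,\pi$.} I would fix a $\Bbbk$-line $\ell\subset Q$ (which exists by \cite{SL}), a $\Bbbk$-point $p\in\ell$, a quadric $Q'\subset\bP^4$ through $C$ and $p$ general enough that $Q_1:=Q\cap Q'$ is a smooth quartic del Pezzo surface containing $C$ and smooth at $p$, and a hyperplane $H$ tangent to $Q$, passing through $p$ but not tangent at $p$, general subject to these incidences, so that $H_1:=H\cap Q$ is a quadric cone with vertex $\ne p$. One checks by inversion of adjunction, exactly as in the proof of Lemma~\ref{lemm:corregcubic}, that $Q_1\cap H_1$ is a cycle of rational curves, hence $(Q,Q_1+H_1)$ is a log Calabi--Yau pair of coregularity $0$. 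Since $C$ lies in the boundary component $Q_1$ with coefficient $1$, blowing up $C$ leaves log discrepancies unchanged, so $\varphi$ becomes a crepant birational map $(Q,Q_1+H_1)\dashrightarrow(\bP^3,S_1+H_2)$; tracking $Q_1$ and $H_1$ through the explicit quadro--cubic transformation, $Q_1$ maps to an irreducible cubic surface $S_1$ with one $\sA_1$-point, $H_1$ to a plane $H_2$, and $S_1\cap H_2$ to a line and a transverse conic, so $(\bP^3,S_1+H_2)$ has coregularity $0$ by Lemma~\ref{lemm:corregcubic}. Since $p\in Q_1$ is a smooth point and $p\in H_1$ is not its vertex, the same discrepancy bookkeeping shows $\pi$ is a crepant birational map $(Q,Q_1+H_1)\dashrightarrow(\bP^3,S_2+H_3)$ with $S_2$ a cubic surface with one $\sA_1$-point and $H_3$ a plane meeting it in a line and a transverse conic.

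\emph{Descent to $(\bP^3,\Delta_3)$.} Both $(\bP^3,S_1+H_2)$ and $(\bP^3,S_2+H_3)$ have coregularity $0$, and I would connect each to the standard toric pair by an explicit chain of crepant birational self-maps of $\bP^3$: a quadratic Cremona transformation $\psi$ (resp.\ $\psi'$) carrying the cubic-with-an-$\sA_1$-point to the cone $S_3$ (resp.\ $S_4$) over a line and a transverse conic, followed by a map $\eta$ (resp.\ $\eta'$) degenerating the quadric-cone component of that cone into two planes and landing on $\Delta_3$; the existence of crepant equivalences of this type between coregularity-$0$ pairs on rational varieties is in the spirit of \cite{LMV24}. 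The crucial point is that all of $\psi,\psi',\eta,\eta'$ can be chosen to extract and contract only $\Bbbk$-rational divisors (planes, and quadric cones over conics that acquire $\Bbbk$-points from $\ell$), and likewise $\pi$, whose only contracted divisor is the strict transform of the quadric cone $T_pQ\cap Q$, a cone over the conic of lines of $Q$ through $p$, which contains the $\Bbbk$-point $[\ell]$. Then $\sigma:=\eta'\circ\psi'\circ\varphi\circ\pi^{-1}\circ\psi^{-1}\circ\eta^{-1}$ makes diagram~\eqref{dia:ellip} commute by construction and is a crepant birational automorphism of $(\bP^3,\Delta_3)$.

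\emph{Computing $\mathbf c(\sigma)$, and the main obstacle.} The invariant $\mathbf c$ is additive under composition of crepant birational maps and satisfies $\mathbf c(\alpha^{-1})=-\mathbf c(\alpha)$ (the argument of \cite[Lemma 2.2]{SL} and \cite[Corollary 7.6]{CLKT} applies verbatim in the divisorial setting), so
\[
\mathbf c(\sigma)=\mathbf c(\eta')+\mathbf c(\psi')+\mathbf c(\varphi)-\mathbf c(\pi)-\mathbf c(\psi)-\mathbf c(\eta).
\]
By Lemma~\ref{lemm:cdiv} every term except $\mathbf c(\varphi)$ is the image in $\dBurn_2(\Bbbk)$ of the Lin--Shinder $c$-invariant of the underlying birational map, which vanishes because those maps contract only $\Bbbk$-rational divisors; hence $\mathbf c(\sigma)=\mathbf c(\varphi)$. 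Finally $\mathbf c(\varphi)$ maps under the forgetful homomorphism $\dBurn_2(\Bbbk)\to\Burn_2(\Bbbk)$ to $c(\varphi)=[C\times\bP^1]-[\mathrm{Jac}^2(C)\times\bP^1]$, which is nonzero since $C$ has no $\Bbbk$-point, so $C$ and $C'\cong\mathrm{Jac}^2(C)$ are non-$\Bbbk$-isomorphic, hence non-$\Bbbk$-birational, genus one curves; therefore $\mathbf c(\sigma)=\mathbf c(\varphi)\ne 0$ in $\dBurn_2(\Bbbk)$. The delicate part of the whole argument is the construction of $\psi,\psi',\eta,\eta'$ down to $(\bP^3,\Delta_3)$ that are at once crepant, keep every intermediate pair of coregularity $0$ with the stated singularity types, and contract only $\Bbbk$-rational divisors; the incidence conditions on $\ell$, $p$, $Q'$, $H$ must be arranged precisely so that all of this holds simultaneously. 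Nonvanishing of $\mathbf c(\varphi)$ itself is just \cite[Proposition 3.6]{SL}.
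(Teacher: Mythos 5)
Your outline follows the paper's strategy (decorate the Lin--Shinder maps $\varphi$ and $\pi$ with boundaries making them crepant, descend to $(\bP^3,\Delta_3)$ by auxiliary crepant maps, conclude by additivity of $\mathbf c$), but there is a concrete error and, more importantly, a genuine gap at exactly the point you yourself flag as ``delicate''. The error: since $\varphi$ is given by the quadrics through $C$, a quadric section $Q_1=Q\cap Q'$ containing $C$ is a member of the defining linear system, and on $\Bl_C Q$ one computes $(2H-E)^3=1$ while $(2H-E)^2\cdot H=3$; hence $\varphi$ maps the quartic del Pezzo $Q_1$ to the \emph{plane} $H_2$ and the quadric cone $H_1$ to the \emph{cubic} $S_1$, the $\sA_1$-point of $S_1$ being the image of the vertex of $H_1$. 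You assert the opposite assignment, and with it lose the actual source of the $\sA_1$-singularity, on which everything downstream depends.

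The gap: the paper's proof is essentially the arrangement of incidences that you leave unverified. There, $H_2$ is a general plane in the target, $Q_1=\varphi^{-1}_*(H_2)$, the vertex $q$ of $H_1$ is chosen \emph{on} the special $\Bbbk$-line $l\subset Q_1$ (strict transform of the conic through the five points of $H_2\cap C'$), so that the $\sA_1$-point of $S_1$ lies on $H_2$ (multiplicity $3$ on the boundary); and the projection point $p$ is not free but is the second, automatically $\Bbbk$-rational, point of $l\cap R$, where $Q_1\cap H_1=l\cup R$ with $R$ a twisted cubic. This forces $p$ to lie on exactly one line of $Q_1$, so $\pi$ contracts $l$ and $S_2=\pi(Q_1)$ acquires an $\sA_1$-point on $H_3$ with $S_2\cap H_3$ a line plus a conic over $\Bbbk$. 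These multiplicity-$3$ configurations are precisely what permit the explicit crepant maps $\psi,\psi'$ (of the form $x_1\mapsto x_1+f_3/f_2$, after Ducat) and then $\eta,\eta'$ to $(\bP^3,\Delta_3)$ via \cite{Du24}. Your choices (an arbitrary $\Bbbk$-point $p$ on an arbitrary $\Bbbk$-line, $Q'$ through $C$ and $p$, $H$ tangent somewhere through $p$) do not force the vertex of $H_1$ onto $Q_1$, nor $p$ onto a line of $Q_1$, so the stated intermediate pairs need not occur; and invoking ``existence in the spirit of \cite{LMV24}'' is insufficient, both because a toric model for a general coregularity-$0$ threefold pair is only conjectural (Ducat's conjecture), and because existence alone gives no control of $\mathbf c(\eta),\mathbf c(\eta')$: the paper never proves these vanish individually, but obtains the cancellation $\mathbf c(\eta^{-1})+\mathbf c(\eta')=0$ by running the \emph{same} chain on the two identically configured sides. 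Your blanket claim that the auxiliary maps ``contract only $\Bbbk$-rational divisors'' is asserted rather than established, and by itself would not even give vanishing of $c$ unless the numbers of extracted and contracted divisors match. The final steps (additivity of $\mathbf c$, identification with the Lin--Shinder invariant, and $c(\varphi)=[C\times\bP^1]-[\mathrm{Jac}^2(C)\times\bP^1]\neq 0$ from \cite{SL}) are fine and agree with the paper.
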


\

The rest of the section is devoted to a constructive proof of Proposition~\ref{prop:ellip}. We construct the crepant birational maps in the diagram \eqref{dia:ellip} in different subsections.

\subsection{Constructing the boundary}
First, note that under our assumption of $\Bbbk$, there indeed exist quintic genus one curves with no $\Bbbk$-points \cite[Lemma 3.8]{SL}. For any such curve $C,$ let $Q$ be a general smooth quadric threefold containing $C$, and $\varphi$ the birational map \eqref{eqn:varphidefi} between $Q$ and $\bP^3$. To extend $\varphi$ to a map between log CY pairs, we find the boundary divisors as follows:
\begin{enumerate}
    \item Pick a general plane $H_2$ in $\bP^3$. The intersection $H_2\cap C'$ is a $\Gal(\bar \Bbbk/\Bbbk)$-invariant set consisting of five $\overline\Bbbk$-points. Since $H_2$ is chosen generally, we may assume that the five points are in general position on $H_2$. 
    \item Let $Q_1$ be the strict transform of $H_2$ under $\varphi^{-1}$. Then $Q_1$ is a smooth quartic del Pezzo surface (cf. \cite[Chapter VII, 5.2]{sempleroth}).
    \item  The strict transform  under $\varphi$ of the unique conic passing through the five points in $H_2\cap C'$ is a $\Bbbk$-line $l$ in~$Q_1$. Pick a general $\Bbbk$-point $q$ on $l$. Let $H_1$ be the quadric cone consisting of all lines in $Q$ passing through $q$. 
    \item Let $S_1$ be the image of $H_1$ under $\varphi$. 
    Recall that $H_1$ is a hyperplane section of $Q$, so the surface $S_1$ is a cubic surface in $\mathbb{P}^3$. 
    Since $q$ is chosen generally, we know that $S_1$ is the blowup of $H_1$ in five general points. It follows that $S_1$ is a cubic surface with one $\sA_1$-singularity. 
\end{enumerate}
Note that under these choices, the curve $C$ is contained in $Q_1$ but not in $H_1$, and $C'$ is in $S_1$ but not in $H_2$. It follows that $\varphi$ extends to a crepant birational map 
\begin{align*}
    \varphi\colon (Q,Q_1+H_1)\dashrightarrow (\bP^3,H_2+S_1)
\end{align*}
with
$$
0\ne\mathbf{c}(\varphi)=(C\times\bP^1,0)-(C'\times\bP^1,0)\in\mathbf{DivBurn}_2(\Bbbk).
$$

\subsection{Constructing the map $\pi$} Note that $Q_1\cap H_1$ is a curve of degree $4$ and arithmetic genus $1$ containing a $\Bbbk$-line $l$. By generality assumptions on $H_2$ and $Q_1$, we know that $Q_1\cap H_1$ is a union of $l$ and a twisted cubic $R$ meeting at two points. Moreover, one of the two points is the vertex of the quadric cone $H_1$ since all twisted cubics in a quadric cone pass through its vertex. It follows that the the other intersection point of $l\cap R$ is a $\Bbbk$-rational smooth point $p$ on $H_1$. Again, by the generality assumptions, $p$ belongs to only one line on $Q_1$. Let $\pi$ be the projection map from $p.$ We obtain a crepant birational map over $\Bbbk$:
$$
\pi\colon (Q,Q_1+H_1)\dashrightarrow(\bP^3,S_2+H_3).
$$
The map $\pi$ birationally transforms $Q$ to $\bP^3$ and the quadric cone $H_1$ to a plane $H_3$. The image of $Q_1$ under $\pi$ is a cubic surface $S_2$ with an $\sA_1$-singular point $q_2$. Indeed, the line $l$ is a $(-1)$-curve in $Q_1$. When restricted to $Q_1$, the map $\pi$ blows up the point $p$ and contract the strict transform of $l$ to $q_2$. The intersection $S_2\cap H_3$ consists of a smooth conic and a line meeting transversally at two $\Bbbk$-rational points. The conic is the image of the twisted cubic $R$ under $\pi$, and the line comes from the exceptional curve above $p.$ It follows from Lemma~\ref{lemm:corregcubic} that the pairs $(Q,Q_1+H_1)$ and $(\bP^3,S_2+H_3)$ are lc, and 
\begin{align*}
  \mathrm{coreg}(\bP^3,S_2+H_3)=0.
\end{align*}
As a birational map between $Q$ and $\bP^3$, $\pi$ is a composition of the blowup of $p$ and the contraction of the strict transform of $H_1$ in $\Bl_p(Q)$ to a conic. It follows that 
\begin{align}\label{eqn:trivcellippi}
    {\bf{c}}(\pi)=(\bP^2,0)-(\bF_2,0)=0.
\end{align}

\subsection{Constructing the maps $\psi$ and $\psi'$}
Recall that the line $l$ is contracted to the $\Bbbk$-rational singular point $q_2$ of $S_2$. It follows that $q_2\in S_2\cap H_3$ has multiplicity 3 on the divisor $S_2+H_3$. The following construction is due to Ducat \cite[Section 5.1]{Du24}. We present the details here to make sure it is defined over $\Bbbk.$ Up to a change of variables, we may assume $q_2=[1:0:0:0]$ and $S_2+H_3$ is given by equations
$$
S_2=\{x_1f_2+f_3=0\}\subset\bP^3_{x_1,x_2,x_3,x_4},\quad H_3=\{x_2=0\}\subset\bP^3_{x_1,x_2,x_3,x_4}
$$
where $f_2$ and $f_3$ are polynomials of degree 2 and 3 in variables $x_2,x_3,x_4$ over $\Bbbk.$ Note that since $q_2$ is an $\sA_1$-singularity, we know that $f_2$ is irreducible. 
Consider the map $\psi$ given by
\begin{align}\label{eqn:psbound}
    \psi\colon (\bP^3,S_2+H_3)&\dashrightarrow (\bP^3,H_4+S_3),\\
    (x_1: x_2: x_3: x_4)&\mapsto(x_1+\frac{f_3}{f_2}: x_2:x_3:x_4)\notag
\end{align}
where the new boundary divisor is given by 
$$
S_3=\{x_2f_2=0\},\quad H_4=\{x_1=0\}.
$$
We check (in the affine chart $x_4=1$) that $\psi$ preserves the volume forms corresponding to the respective boundary divisors in~\eqref{eqn:psbound}
$$
\psi^*\left(\frac{\dd x_1\wedge\dd x_2\wedge\dd x_3}{x_1x_2f_2}\right)=\frac{\dd (x_1+f_3f_2^{-1})\wedge\dd x_2\wedge\dd x_3}{(x_1+f_3f_2^{-1})x_2f_2}=\frac{\dd x_1\wedge\dd x_2\wedge\dd x_3}{x_2(x_1f_2+f_3)}.
$$
It follows that $\psi$ is a crepant birational map. In particular, $S_3\cap H_4$ is the union of a line and a smooth conic. Since the pair $(\bP^3,H_4+S_3)$ is lc, the line and the conic meet at two distinct points. Both of the two points are $\Bbbk$-rational since $S_2\cap H_3$ being the union of a line and a conic implies that $f_2(0,x_3,x_4)$ is reducible over $\Bbbk$. The inverse map of $\psi$ is given by 
$$
\psi^{-1}: (x_1: x_2: x_3: x_4)\mapsto(x_1-\frac{f_4}{f_3}: x_2:x_3:x_4).
$$
It is not hard to see that the exceptional divisors of $\psi$ and $\psi^{-1}$ are the same and thus 
\begin{align}\label{eqn:trivcpsi'ellip}
    \mathbf c(\psi)=0.
\end{align}
The map $\psi'$ is constructed in the same way: since the singular point $q$ of $H_1$ lies on $Q_1$, its image $q_1$ under $\varphi$ is a singular point of the cubic surface $S_1$ which lies on the plane $H_2$. It follows that $q_1$ has multiplicity 3 in $H_2+S_1.$ The intersection of $H_2\cap S_1$ is also a union of a line and a smooth conic, which is the image under $\varphi$ of the twisted cubic $R$ and the the line $l$ respectively. Therefore the configurations of the divisors $H_2+S_1$ and $S_2+H_3$ are identical. Applying the same construction of $\psi$, we obtain a similar crepant birational map 
$$
\psi':(\bP^3, H_2+S_1)\dashrightarrow (\bP^3,S_4+H_5)
$$
with 
\begin{align}\label{eqn:trivcpsiellip}
    \mathbf c(\psi')=0.
\end{align}
As above, $H_5\cap S_4$ is a union of a line and a conic intersecting at two $\Bbbk$-rational points, $S_4$ is a cone over this reducible curve and $H_5$ is a plane.

\subsection{Constructing the maps $\eta$ and $\eta'$} 
By \cite{Du24} (in particular the proof of Theorem 1.2 and Example 1.3 there), there exists a crepant birational map:
$$
\eta_1\colon (\bP^3,S_3+H_4)\dashrightarrow (\bP^1\times\bP^2,\Delta'),
$$
where $(\bP^1\times\bP^2,\Delta')$ is the toric model with
$$
\Delta'=\left(\{0\}\times\bP^2\right)+\left(\{0\}\times\bP^2\right)+\left(\bP^1\times E\right)
$$
and $E$ is a union of three lines forming a triangle in $\bP^2$. Similarly as above, one can check that $\eta_1$ is defined over $\Bbbk$. On the other hand, since $S_4+H_5$ is of the same configuration, there exists a similar crepant birational map over $\Bbbk$
$$
\eta_1'\colon (\bP^3,S_4+H_5)\dashrightarrow (\bP^1\times\bP^2,\Delta')
$$
such that 
\begin{align*}
    \mathbf{c}(\eta_1^{-1})+\mathbf{c}(\eta_1')=0.
\end{align*}
Consider the crepant birational map 
$$
\eta_2\colon(\bP^1_{u_1,u_2}\times\bP^2_{t_1,t_2,t_3},\Delta')\dashrightarrow (\bP^3,\Delta_3)
$$
$$
(u_1:u_2)\times(t_1:t_2:t_3)\mapsto(1:\frac{u_2}{u_1}:\frac{t_2}{t_1}:\frac{t_3}{t_1})
$$
where $(\bP^3,\Delta_3)$ is the standard toric pair. Then put 
$$
\eta=\eta_2\circ\eta_1, \qquad \eta'=\eta_2\circ\eta_1',
$$
we obtain the desired maps 
$$
\eta\colon (\bP^3,S_3+H_4)\dashrightarrow(\bP^3,\Delta_3),\qquad\eta'\colon (\bP^3,S_4+H_5)\dashrightarrow(\bP^3,\Delta_3)
$$
with 
\begin{align}\label{eqn:trivcelleta}
    \bc(\eta^{-1})+\bc(\eta')=\mathbf{c}(\eta_1^{-1})+\mathbf{c}(\eta_1')=0.
\end{align}

\subsection{Proof of Proposition~\ref{prop:ellip}}
Put 
$$
\sigma:=\eta'\circ\psi'\circ\varphi\circ\pi^{-1}\circ\psi^{-1}\circ\eta^{-1}.
$$
Then $\sigma$ is a crepant birational automorphism over $\Bbbk$ of the standard toric pair $(\bP^3,\Delta_3)$. Combining \eqref{eqn:trivcellippi}, \eqref{eqn:trivcpsi'ellip}, \eqref{eqn:trivcpsiellip} and \eqref{eqn:trivcelleta}, we conclude that 
$$
0\ne\bc(\sigma)=\bc(\varphi)=(C\times\bP^1,0)-(C'\times\bP^1,0)\in \mathbf{DivBurn}_2(\Bbbk).
$$

\begin{coro}\label{coro:main3}
    Under the same assumption on the field $\Bbbk$ as in Proposition~\ref{prop:ellip}, there exists a volume preserving birational automorphism $\sigma'$ over $\Bbbk$ of the pair $[\bP^3,\omega_3]$, where $\omega_3$ is the standard torus invariant volume form, such that 
    $$
   0\ne\mathbf{c}(\sigma')\in\Burnf_2(\Bbbk).
    $$
    In particular, $\Bir_\Bbbk(\bP^n,\omega_n)$ is not generated by pseudo-regularizable elements for $n\geq 3$.
\end{coro}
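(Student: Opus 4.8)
The plan is to deduce the statement from Proposition~\ref{prop:ellip} together with the dictionary between crepant birational and volume preserving maps recorded in Corollary~\ref{coro:volumepreserve}. Let $\sigma\in\Bir_\Bbbk(\bP^3,\Delta_3)$ be the crepant birational automorphism of the standard toric pair produced in Proposition~\ref{prop:ellip}, and let $\bar\sigma\in\Bir_\Bbbk(\bP^3)$ denote its underlying birational self-map of $\bP^3$. Since $(\bP^3,\Delta_3)$ is a log Calabi--Yau pair with $\Delta_3$ effective and $\mathrm{div}(\omega_3)=-\Delta_3$, Corollary~\ref{coro:volumepreserve} yields a positive integer $N$ with $\sigma^N\in\Bir_\Bbbk(\bP^3,\omega_3)$. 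I would set $\sigma':=\sigma^N$; its underlying birational self-map is $\bar\sigma^{\,N}$.

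To evaluate $\mathbf c(\sigma')$ I would use the commutative square of Lemma~\ref{lemm:discr}, which gives $\mathbf c(\sigma')=\iota\bigl(c(\bar\sigma^{\,N})\bigr)$, where $c\colon\Bir_\Bbbk(\bP^3)\to\Burn_2(\Bbbk)$ is the Lin--Shinder homomorphism. By Remark~\ref{rema:samec} and Lemma~\ref{lemm:cdiv}, the nonzero class $\mathbf c(\sigma)=(C\times\bP^1,0)-(C'\times\bP^1,0)\in\mathbf{DivBurn}_2(\Bbbk)$ from Proposition~\ref{prop:ellip} corresponds to $c(\bar\sigma)=[C\times\bP^1]-[C'\times\bP^1]\in\Burn_2(\Bbbk)$, which is nonzero because $C$ and $C'$ are non-$\Bbbk$-isomorphic curves of genus one, so $C\times\bP^1$ and $C'\times\bP^1$ are not $\Bbbk$-birational (as already established in \cite{SL}). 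Since $c$ is a group homomorphism we have $c(\bar\sigma^{\,N})=N\,c(\bar\sigma)$, and as $\Burn_2(\Bbbk)$ is free abelian this is again nonzero; since $\iota$ is injective, $0\ne\mathbf c(\sigma')\in\Burnf_2(\Bbbk)$, as required.

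For the last assertion I would argue as follows. Pseudo-regularizable elements of $\Bir_\Bbbk(\bP^n,\omega_n)$ lie in $\ker\mathbf c$ by Example~\ref{exam:vanishpseudo}, and $\mathbf c\colon\Bir_\Bbbk(\bP^n,\omega_n)\to\Burnf_{n-1}(\Bbbk)$ is a group homomorphism (\cite[Corollary 7.6]{CLKT}); hence the subgroup they generate is contained in $\ker\mathbf c$, which is a proper subgroup as soon as we exhibit an element outside it. For $n=3$ this element is $\sigma'$. For $n\ge 4$ I would use the volume preserving equivalence $[\bP^n,\omega_n]\simeq[\bP^3\times\bP^{n-3},\omega_3\wedge\omega_{n-3}]$ in $\Burnf_n(\Bbbk)$ (in suitable affine coordinates both forms equal $\tfrac{\dd x_1}{x_1}\wedge\cdots\wedge\tfrac{\dd x_n}{x_n}$), transport $\sigma'$ to $\sigma'\times\mathrm{id}_{\bP^{n-3}}\in\Bir_\Bbbk(\bP^n,\omega_n)$, and note that a resolution of $\sigma'\times\mathrm{id}_{\bP^{n-3}}$ is the product of a resolution of $\sigma'$ with $\bP^{n-3}$; by Lemma~\ref{lemm:discr} this gives $\mathbf c(\sigma'\times\mathrm{id}_{\bP^{n-3}})=\iota\bigl(N([C\times\bP^{n-2}]-[C'\times\bP^{n-2}])\bigr)$, which is nonzero since $C\times\bP^{n-2}$ and $C'\times\bP^{n-2}$ are not $\Bbbk$-birational (the base of the maximal rationally connected fibration is a birational invariant, and birational equivalence of curves is isomorphism). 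The kernel argument in dimension $n$ then finishes the proof.

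The argument is essentially formal once Proposition~\ref{prop:ellip} is available; the only points needing genuine attention are the identification of the two incarnations of $\mathbf c$ — the one attached to $\sigma$ as a crepant birational map, valued in $\mathbf{DivBurn}_2(\Bbbk)$, and the one attached to $\sigma'$ as a volume preserving map, valued in $\Burnf_2(\Bbbk)$ — and the verification that passing to an $N$-th power and multiplying by $[\bP^{n-3},\omega_{n-3}]$ does not annihilate the invariant. Both are handled by Lemma~\ref{lemm:discr}, Remark~\ref{rema:samec}, the homomorphism property of $c$ and $\mathbf c$, torsion-freeness of the Burnside groups, and injectivity of $\iota$; I do not expect any substantial further obstacle.
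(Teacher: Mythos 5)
Your proposal is correct and follows essentially the same route as the paper: pass to $\sigma'=\sigma^N$ via Corollary~\ref{coro:volumepreserve}, identify $\mathbf c(\sigma')$ with $N\bigl([C\times\bP^1,0]-[C'\times\bP^1,0]\bigr)\neq 0$ via Lemma~\ref{lemm:discr}/Remark~\ref{rema:samec}, extend to $n\geq 4$ by taking the product with the identity on $\bP^{n-3}$ and using that $C$ and $C'$ are not stably birational, and conclude non-generation from Example~\ref{exam:vanishpseudo} and the homomorphism property of $\mathbf c$. The only difference is that you spell out a few steps the paper leaves implicit (freeness of the Burnside group, injectivity of $\iota$, the MRC argument), which is harmless.
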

\begin{proof}
    Let $\sigma$ be the crepant birational automorphism of the standard toric pair with $\bc(\sigma)\ne 0$ constructed in Proposition~\ref{prop:ellip}. By Corollary~\ref{coro:volumepreserve}, there exists a positive integer $N$ such that $\sigma^N$ is a volume preserving birational automorphism of $[\bP^3,\omega_3]$. Put $\sigma'=\sigma^N$. Remark~\ref{rema:samec} shows that
    $$
    0\ne\bc(\sigma')=N\cdot([C\times\bP^1,0]-[C'\times\bP^1,0])\in\Burnf_2(\Bbbk).
    $$
The map $\sigma'$ naturally extends to a volume preserving birational automorphism of the pair $[\bP^3\times\bP^r,\omega_3\wedge\omega_r]
$ over $\Bbbk$ with the identity map on $\bP^r$ for $r\geq 0$, and thus also to a volume preserving birational automorphism $\tilde\sigma$ of the pair 
$
[\bP^{3+r}_{\Bbbk},\omega_{3+r}].
$
Since $C$ and $C'$ are not stably birational over $\Bbbk$, we know that
$$
    0\ne\bc(\tilde\sigma)=N\cdot([C\times\bP^{r+1},0]-[C'\times\bP^{r+1},0])\in\Burnf_{r+2}(\Bbbk).
    $$
        The last assertion then follows from Example \ref{exam:vanishpseudo}.
\end{proof}

\begin{coro}\label{coro:nonsimple3}
     Under the same assumption on the field $\Bbbk$ as in Proposition~\ref{prop:ellip}, and for $n\geq 3$, there exists a surjective homomorphism  
     $$
     \Bir_\Bbbk(\bP^n,\omega_n)\to A
     $$
     where $A\simeq\bigoplus_J \bZ$ and $J$ is a set of the same cardinality as $\Bbbk.$ In particular, $\Bir_\Bbbk(\bP^n,\omega_n)$ is not simple when $n\geq 3$.
\end{coro}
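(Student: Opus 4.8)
The plan is to enhance the single nonzero class produced in Corollary~\ref{coro:main3} into a family of $|\Bbbk|$ many $\bZ$-independent classes by varying the genus one curve, and then to read off a surjection onto a big free abelian group. By Lemma~\ref{lemm:discr}, the $\bc$-homomorphism on $\Bir_\Bbbk(\bP^n,\omega_n)$ equals $\iota\circ c$ for the Lin--Shinder homomorphism $c\colon\Bir_\Bbbk(\bP^n)\to\Burn_{n-1}(\Bbbk)$, and $\Burn_{n-1}(\Bbbk)$ is \emph{free} abelian on the set of $\Bbbk$-birational classes of $(n-1)$-folds; so it suffices to build a surjection from $\Bir_\Bbbk(\bP^n,\omega_n)$, through $c$, onto a free abelian group of rank $|\Bbbk|$.

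The main work is arithmetic, and this is the step I expect to be the real obstacle. I would choose an index set $J$ with $|J|=|\Bbbk|$ and, for each $j\in J$, an elliptic curve $E_j$ over $\Bbbk$ with $j$-invariant $\notin\{0,1728\}$, the $j$-invariants pairwise distinct, together with a nonzero class in $H^1(\Bbbk,E_j)[5]$; this last group is nonzero (indeed infinite) for every $E$ over each of the fields in the statement, as one sees from $0\to E(\Bbbk)/5\to H^1(\Bbbk,E[5])\to H^1(\Bbbk,E)[5]\to 0$ using finiteness of $E(\Bbbk)/5$ (Mordell--Weil, resp.\ Lang--N\'eron) and infinitude of $H^1(\Bbbk,E[5])$. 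Realizing these torsor classes by quintic genus one normal curves $C_j\subset\bP^4$ over $\Bbbk$ with no $\Bbbk$-rational point and Jacobian $E_j$, one gets: $C_j\not\cong C_{j'}$ and $C_j\not\cong\mathrm{Jac}^2(C_{j'})$ for $j\ne j'$, since the Jacobians $E_j,E_{j'}$ differ; while $C_j\not\cong\mathrm{Jac}^2(C_j)$ because the torsor class of $\mathrm{Jac}^2(C_j)$ is twice that of $C_j$, hence not its $\pm$-multiple, and $\Aut_{\bar\Bbbk}(E_j)=\{\pm1\}$. Since for a smooth proper curve $B$ of genus $\ge1$ the maximal rationally connected fibration of $B\times\bP^r$ recovers $B$ up to $\Bbbk$-birational equivalence, two genus one curves over $\Bbbk$ are stably birational iff $\Bbbk$-isomorphic; hence all the $(n-1)$-folds $C_j\times\bP^{n-2}$ are pairwise non-stably-birational, and none is stably birational to any $\mathrm{Jac}^2(C_{j'})\times\bP^{n-2}$. (One may instead quote the relevant count from \cite{SL} and the references therein in the quantitative form needed here.)

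With this family in hand, for each $j\in J$ run the construction of Proposition~\ref{prop:ellip} with $C=C_j$ and extend to $\bP^n$ as in Corollary~\ref{coro:main3}, producing $\widetilde\sigma_j\in\Bir_\Bbbk(\bP^n,\omega_n)$ with
\[
c(\widetilde\sigma_j)=N_j\big([C_j\times\bP^{n-2}]-[\mathrm{Jac}^2(C_j)\times\bP^{n-2}]\big)\in\Burn_{n-1}(\Bbbk),\qquad N_j\ge 1.
\]
By the previous paragraph the $[C_j\times\bP^{n-2}]$ are pairwise distinct basis vectors of $\Burn_{n-1}(\Bbbk)$, each distinct from every $[\mathrm{Jac}^2(C_{j'})\times\bP^{n-2}]$, so there is a well-defined projection homomorphism $p\colon\Burn_{n-1}(\Bbbk)\to\bigoplus_{j\in J}\bZ$ sending $[C_j\times\bP^{n-2}]\mapsto e_j$ and killing every other basis vector; then $p(c(\widetilde\sigma_j))=N_j e_j$. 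Set $A:=\Ima(p\circ c)$. As a subgroup of a free abelian group $A$ is free; it contains $\bigoplus_{j\in J}N_j\bZ$ and sits inside $\bigoplus_{j\in J}\bZ$, so it has rank $|J|=|\Bbbk|$, whence $A\cong\bigoplus_J\bZ$, and $p\circ c\colon\Bir_\Bbbk(\bP^n,\omega_n)\to A$ is surjective by construction. Finally, $\Bir_\Bbbk(\bP^n,\omega_n)$ is nonabelian (it already contains noncommuting copies of $\bG_m^n$ and of $\SL_n(\bZ)$), hence cannot embed into the abelian group $A$, so $\Ker(p\circ c)$ is a proper nontrivial normal subgroup and $\Bir_\Bbbk(\bP^n,\omega_n)$ is not simple; the passage from $\bP^3$ to all $n\ge3$ is exactly as in Corollary~\ref{coro:main3}.
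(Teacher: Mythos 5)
Your argument is essentially the paper's proof: produce a family of cardinality $|\Bbbk|$ of quintic genus one curves over $\Bbbk$ without rational points, run Proposition~\ref{prop:ellip}/Corollary~\ref{coro:main3} for each member, and compose $\bc$ (identified with $\iota\circ c$ via Lemma~\ref{lemm:discr}) with the projection of the free abelian group $\Burnf_{n-1}(\Bbbk)$ onto the span of the resulting classes; the paper obtains the family by citing \cite[Lemma 3.8]{SL}, exactly the fallback you propose, and your observation that distinct Jacobians force the basis classes apart, plus the stable-birationality-implies-isomorphism argument via MRC quotients, matches what the paper uses implicitly. The only caveat concerns your optional self-contained arithmetic: a nonzero class in $H^1(\Bbbk,E)[5]$ gives a torsor of \emph{period} $5$, but embedding it as a quintic genus one curve in $\bP^4$ requires a degree-$5$ line bundle defined over $\Bbbk$, i.e.\ \emph{index} $5$, and period need not equal index over the fields in question without further argument; since you explicitly allow quoting \cite{SL} for the count, this does not affect the correctness of the proof as a whole.
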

\begin{proof}
    Let $I$ be the set of isomorphism classes of quintic genus one curves $C$ with no $\Bbbk$-points. By \cite[Lemma 3.8]{SL}, the set $I$ has the cardinality of $\Bbbk.$ Let $J$ be the set of unordered pairs 
    $$
    J:=\{(C,\mathrm{Jac}^2(C)): C\in I\}.
    $$
    Then $J$ also has the cardinality of $\Bbbk$. Put 
    $$
    A':=\bigoplus_{(C,\mathrm{Jac}^2(C))\in J}\bZ \cdot \left([C\times\bP^{n-2},0]-[\mathrm{Jac}^2(C)\times\bP^{n-2},0]\right)\subset\Burnf_{n-1}(\Bbbk).
    $$
    Since distinct elements in $I$ are not stably birational to each other, we have $A'\simeq \bigoplus_J\bZ$. Consider the projection map 
    $$
    \mathrm{pr}:\Burnf_{n-1}(\Bbbk)\to A'.
    $$ 
    Then the image of the map
    $$
    \mathrm{pr}\circ \bc: \Bir_\Bbbk(\bP^n,\omega_n)\to A'
    $$
    is a free abelian group $A$ also isomorphic to $\bigoplus_J\bZ.$ 
\end{proof}

\begin{rema}
  The study of varieties over nonclosed fields is closely related to that of finite group actions on varieties over algebraically closed fields. In particular, one can find an action of the
    cyclic group $C_5$ on a quintic elliptic curve $C$ with no $G$-fixed points, where $C$ is realized as a linear section of the Grassmannian $\Gr(2,5)$, see e.g., \cite[Example 10]{KTorbit}, \cite[Section 6]{CTZcub}. There is a $C_5$-equivariant diagram similar to that in \eqref{dia:ellip}: all pertinent geometric objects appearing in the construction can be chosen to be $C_5$-equivariant. This extends the construction of a nontrivial equivariant birational invariant $c_G$, introduced in \cite[Section 7]{KTorbit}, to the context of log Calabi-Yau pairs.  
\end{rema}
\

\section{$\bP^4$ with K3 surfaces}\label{sect:K3}
Throughout this section, we work over $\Bbbk=\bC$. The goal is to construct a crepant birational automorphism $\sigma$ of the standard toric pair $(\bP^4, \Delta_4)$ such that $\mathbf{c}(\sigma)\ne 0$.
The main ingredient is the Cremona transformation of $\bP^4$ studied in \cite{HL}. First, we recall their construction, see also \cite[Theorem 3.12]{SL}.

\begin{theo}[{\cite[Theorem 2.1, Theorem 3.1]{HL}}]\label{theo:HLmain}
Let $(R_L,\Gamma)$ be a general polarized K3 surface with a polarization $\Gamma$ such that $\Gamma^2 = 12$ and let $\{ x_1, x_2, x_3 \} \subset R_L$ be a general triple of points. 
The linear system $|\Gamma|$ defines an embedding $R_L \to \bP^7$. Let $S_L \subset \bP^4$ be the proper transform of $R_L$ under the projection $\bP^7 \dashrightarrow \bP^4$ from the plane generated by $\{ x_1, x_2, x_3 \}$. Then there exists the following commutative diagram:
\begin{equation}\label{eqn:HLcremona}
\begin{tikzcd}
& X \ar[rd, "\pi_M"] \ar[dl, swap, "\pi_L"] & \ \\
S_L\subset \bP^4 \ar[rr, dashed, "\psi"] & & \bP^4 \supset S_M
\end{tikzcd}
\end{equation}
where
\begin{enumerate}
\item 
$\psi$ is a birational map given by the linear system $\mathcal{L}_1$ of quartics containing $S_L.$ The base locus $\mathrm{Bs}(\mathcal{L}_1)=S_L$;
\item
$\psi^{-1}$ is also given by a linear system $\mathcal{L}_2$  of quartics, where $\mathrm{Bs}(\mathcal{L}_2)$ is a surface $S_M$. Similarly, there exists a K3
surface $R_M$ of degree $12$ and three points $\{ x'_1, x'_2, x'_3 \}$ on $R_M$ such that  $S_M\subset\bP^4$ is the proper transform of $R_M$ under the projection $\bP^7 \dashrightarrow \bP^4$ from the plane generated by $\{ x_1', x_2', x_3' \}$;
\item 
$S_L$ and $S_M$ are non-normal surfaces of degree $9$. The singular locus of each surface consists of three transverse double points, denoted by $\Sing(S_L)=\{p_1, p_2, p_3\}$ and $\Sing(S_M)=\{q_1, q_2, q_3\}$;

\item 
$\pi_L$ is the blow up of $S_L$, $\pi_M$ is the blow up of $S_M$.
\end{enumerate}
Furthermore, if $\mathrm{Pic}(R_L) = \mathbb{Z} \cdot \Gamma$, then $R_M$ is the unique Fourier–Mukai partner of $R_L$ which is not isomorphic to $R_L$. 
\end{theo}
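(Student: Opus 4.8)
The plan is to prove this following the classical paradigm of \emph{special Cremona transformations} (in the spirit of Crauder--Katz and Ein--Shepherd-Barron): when a Cremona transformation of $\bP^n$ has a mildly singular projective variety as base locus, the transformation is governed by the projective geometry of that variety, and applying the inverse produces a ``partner'' of it. First I would pin down the surface $S_L$. Since $R_L$ is a general polarized K3 surface with $\Gamma^2=12$, the polarization $\Gamma$ is very ample with $h^0(\Gamma)=\tfrac12\Gamma^2+2=8$, so $|\Gamma|$ embeds $R_L\hookrightarrow\bP^7$ as a surface of degree $12$. Let $\mu\colon\widetilde R_L=\Bl_{\{x_1,x_2,x_3\}}R_L\to R_L$ be the blow-up of the three general points; projecting $R_L$ from $\Pi=\langle x_1,x_2,x_3\rangle$ amounts to using the linear system $|\mu^*\Gamma-e_1-e_2-e_3|$ on $\widetilde R_L$, which has $h^0=8-3=5$ and thus maps $\widetilde R_L$ onto $S_L\subset\bP^4$. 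Setting $L:=\mu^*\Gamma-e_1-e_2-e_3$, one computes $\deg S_L=L^2=\Gamma^2-3=9$ (the map $\nu\colon\widetilde R_L\to S_L$ being birational onto its image), and a local analysis at the three contracted secant configurations $\overline{x_ix_j}\subset\Pi$ shows that $\nu$ is the normalization of $S_L$ and that $S_L$ has exactly three transverse double points $p_1,p_2,p_3$.

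The heart of the matter is the linear system $\mathcal L_1=|I_{S_L}(4)|$ of quartics through $S_L$. I would prove $\dim\mathcal L_1=4$, i.e. that the restriction map $H^0(\bP^4,\cO(4))\to H^0(S_L,\cO_{S_L}(4))$ has rank $65$ (recall $h^0(\cO_{\bP^4}(4))=\binom{8}{4}=70$). To compute $h^0(\cO_{S_L}(4))$ I would use the normalization sequence $0\to\cO_{S_L}\to\nu_*\cO_{\widetilde R_L}\to\mathcal Q\to 0$ with $\mathcal Q$ of length $3$ (one unit at each transverse double point), together with the fact that $\nu^*\cO_{S_L}(4)=4L$ is big and nef on the K3 surface $\widetilde R_L$ with $(4L)^2=16\cdot 9=144$, so that its higher cohomology vanishes (Ramanujam/Kawamata--Viehweg) and $h^0(4L)=\chi(4L)=\tfrac12\cdot 144+2=74$; the postulation of $S_L$ in $\bP^4$ must then be read off from its explicit equations. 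Granting $\dim\mathcal L_1=4$, the system defines a rational map $\psi\colon\bP^4\dashrightarrow\bP^4$, which I would resolve by $X:=\Bl_{S_L}\bP^4$ with exceptional divisor $E_L$ and $H:=\pi_L^*\cO(1)$: the strict transform of $\mathcal L_1$ is a base-point-free system of the form $|4H-mE_L|$, with $m$ the multiplicity of the general quartic along $S_L$, defining a morphism $\pi_M\colon X\to\bP^4$. Birationality of $\psi$ then amounts to the numerical identity $(4H-mE_L)^4=1$ on the fourfold $X$, which I would verify by a Segre-class computation on the blow-up of the non-normal surface $S_L$.

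Finally I would analyse the inverse side. By symmetry of the diagram \eqref{eqn:HLcremona}, $\pi_M$ contracts a divisor onto a surface $S_M\subset\bP^4$ and $\psi^{-1}$ is cut out by the quartics through $S_M$; running the same numerology backwards gives $\deg S_M=9$ with three transverse double points, and reversing the projection picture of the first step identifies the minimal desingularization of $S_M$ with a degree-$12$ K3 surface $R_M$ carrying three marked points. For the derived equivalence $D^b(R_L)\simeq D^b(R_M)$ I would show that the correspondence carried by $X$ induces a Hodge isometry of transcendental lattices $T(R_L)\simeq T(R_M)$ and invoke derived Torelli (Mukai--Orlov); equivalently, $R_M$ can be realized as a moduli space $M_{R_L}(v)$ of stable sheaves on $R_L$ for an isotropic primitive Mukai vector, and for $\Gamma^2=12$ one may take $v=(3,\Gamma,2)$, so that $v^2=\Gamma^2-12=0$. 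When $\Pic(R_L)=\bZ\Gamma$, this $v$ is not in the orbit of $(1,0,1)$ under the group generated by spherical reflections, hence $M_{R_L}(v)\not\simeq R_L$; and since a Picard-rank-$1$ K3 surface of degree $12$ has exactly two Fourier--Mukai partners, $R_M$ is forced to be the unique nontrivial one.

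The main obstacle is the middle step. Both the dimension count $\dim\mathcal L_1=4$ — which needs a vanishing/Riemann--Roch input for $\cO_{S_L}(4)$ \emph{and} genuine control of the postulation of $S_L$ in $\bP^4$ — and the birationality identity $(4H-mE_L)^4=1$ on the blown-up fourfold are delicate, and it is exactly here that the value $\Gamma^2=12$, rather than some other even integer, gets forced; everything after that is bookkeeping driven by the resolution diagram.
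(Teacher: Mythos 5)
First, a point of context: the paper does not prove this statement at all --- Theorem \ref{theo:HLmain} is imported verbatim from Hassett--Lai \cite{HL} (their Theorems 2.1 and 3.1), so there is no internal argument to compare with; what you have written is a proposed reconstruction of the proof in \cite{HL}. Your architecture does broadly match theirs (quartics through $S_L$, resolution by the blow-up $X$ of $S_L$, intersection theory on $X$, identification of $R_M$ as the non-trivial Fourier--Mukai partner via the Mukai lattice and the count of two partners in degree $12$). The essential difference is that Hassett--Lai settle precisely the two steps you yourself flag as delicate --- that the quartics through $S_L$ form a linear system of projective dimension $4$ with base locus exactly $S_L$, and that the resulting map is birational with an inverse of the same shape --- not by abstract postulation and Segre-class computations, but by exhibiting an explicit example (verified by computer algebra) and then deducing the general case by openness/semicontinuity over the moduli of triples $(R_L,\Gamma,\{x_i\})$. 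As written, your proposal leaves exactly this core open (``must be read off from its explicit equations''), so it is an outline rather than a proof.

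Beyond the acknowledged gaps, there are concrete problems. (i) Your Riemann--Roch count is wrong: $\widetilde{R}_L=\Bl_{\{x_1,x_2,x_3\}}R_L$ is not a K3 surface; with $K_{\widetilde{R}_L}=e_1+e_2+e_3$ one has $\chi(4L)=2+\tfrac12\bigl((4L)^2-4L\cdot K_{\widetilde{R}_L}\bigr)=2+\tfrac12(144-12)=68$, not $74$. With the corrected value the normalization sequence gives $h^0(\cO_{S_L}(4))\leq 68$ and (if the gluing conditions at the three nodes are independent) $=65$, which is consistent with a $5$-dimensional space of quartics, but you still must prove that the restriction map $\rH^0(\bP^4,\cO(4))\to \rH^0(S_L,\cO_{S_L}(4))$ has rank exactly $65$; this postulation statement is the missing content, and your $74$ would not even be compatible with it. (ii) The assertion that $S_L$ has exactly three transverse double points ``from the secant configurations $\overline{x_ix_j}\subset\Pi$'' is not established by the local analysis you invoke; one needs either the double-point formula together with genuine genericity statements (no trisecants through $\Pi$, no tangential collisions, no worse singularities), or, as in \cite{HL}, the explicit example. (iii) The birationality criterion $(4H-E_L)^4=1$ presupposes that the strict transform of $\mathcal{L}_1$ on the blow-up of the non-normal surface $S_L$ is base point free and maps onto $\bP^4$ with image of dimension $4$; neither is automatic, and the intersection numbers on $X$ (cf.\ Lemma \ref{lem-K3-intersection-theory}, Lemma \ref{lem-intersection-theory}) are themselves part of what has to be proved. (iv) For the last claim you still need that the correspondence $X$ induces a Hodge isometry of transcendental lattices $T(R_L)\simeq T(R_M)$ (or an identification of $R_M$ with a moduli space of sheaves on $R_L$); in \cite{HL} this comes out of the computation of $\rH^4(X,\bZ)$ and its algebraic part, and asserting the Mukai vector $(3,\Gamma,2)$ without tying it to the geometry of $\psi$ leaves the derived equivalence of the actually constructed $R_M$ unproved.
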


\begin{coro}
Let $\psi$ be the Cremona transformation constructed in Theorem~\ref{theo:HLmain} from a very general K3 surface of degree $12$. One has
\begin{align}\label{eqn:slk3}
    0\ne c(\psi)=[S_L\times\bP^1]-[S_M\times\bP^1]\in\Burn_3(\bC).
\end{align}
\end{coro}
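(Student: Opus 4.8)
The equality $c(\psi)=[S_L\times\bP^1]-[S_M\times\bP^1]$ is \cite[Theorem 3.12]{SL}; here is the plan. The diagram \eqref{eqn:HLcremona} already provides a resolution of $\psi$ in the sense used to define $c$: the maps $\pi_L,\pi_M$ are morphisms and $\psi=\pi_M\circ\pi_L^{-1}$. Since $\bC$ is algebraically closed, $\Bbbk$-irreducible components are just irreducible components, so it remains to identify $\mathrm{Exc}(\pi_L)$ and $\mathrm{Exc}(\pi_M)$. As $\pi_L$ is the blow-up of the irreducible surface $S_L\subset\bP^4$ and an isomorphism over $\bP^4\setminus S_L$, its exceptional locus is the irreducible divisor $E_L=\pi_L^{-1}(S_L)$. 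Over the dense open $S_L\setminus\Sing(S_L)$, which is the complement of three points, $\pi_L$ restricts to the $\bP^1$-bundle $\bP(N_{S_L/\bP^4})$, which is Zariski-locally trivial; hence $E_L$ is birational to $S_L\times\bP^1$, the preimages of the three non-normal points being of dimension $<3$ and thus irrelevant birationally. Therefore $[\mathrm{Exc}(\pi_L)]=[S_L\times\bP^1]$, and symmetrically $[\mathrm{Exc}(\pi_M)]=[S_M\times\bP^1]$ in $\Burn_3(\bC)$. Finally $E_L\ne E_M$ (otherwise $\pi_M$ would factor through $\pi_L$ and $\psi$ would be linear), so there is no cancellation and $c(\psi)=[S_L\times\bP^1]-[S_M\times\bP^1]$.

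For the non-vanishing, $\Burn_3(\bC)$ is free abelian on birational classes of threefolds, so it suffices to show $S_L\times\bP^1$ and $S_M\times\bP^1$ are not birational. The projection $\bP^7\dashrightarrow\bP^4$ from the plane spanned by $\{x_1,x_2,x_3\}$ restricts to a generically injective, hence birational, map $R_L\dashrightarrow S_L$ (the only identifications occur over the three double points of $S_L$, a finite set), so $S_L$ is birational to $R_L$, and likewise $S_M$ to $R_M$. If $R_L\times\bP^1$ and $R_M\times\bP^1$ were birational, then --- the maximal rationally connected fibration being a birational invariant, and K3 surfaces being non-uniruled --- their MRC quotients $R_L$ and $R_M$ would be birational, hence isomorphic (birational K3 surfaces are isomorphic). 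This contradicts the last assertion of Theorem~\ref{theo:HLmain}: for a very general polarized K3 surface of degree $12$ one has $\mathrm{Pic}(R_L)=\bZ\cdot\Gamma$, so $R_M\not\cong R_L$. Hence $c(\psi)\ne 0$.

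Since this corollary is essentially a recollection of \cite[Theorem 3.12]{SL} together with Theorem~\ref{theo:HLmain}, the only delicate point is the non-normality of $S_L$ and $S_M$: one must confirm that blowing up these surfaces (and not their normalizations) yields an \emph{irreducible} exceptional divisor that is birationally a $\bP^1$-bundle over the surface. This is a local statement at the three transverse double points, where the blow-up of a pair of transverse $2$-planes in $\bA^4$ has the $\bP^1$-bundle over the smooth locus as a dense open of its exceptional divisor; as the double points form a codimension-$2$ locus in the surface, they do not affect the birational class of the threefold $E_L$. The remaining inputs --- the precise shape of the blow-ups $\pi_L,\pi_M$, the birationality of $R_L\dashrightarrow S_L$, and the Picard rank and Fourier--Mukai statements --- are supplied by \cite{HL} and \cite{SL}.
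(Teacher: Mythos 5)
Your proposal is correct and follows essentially the same route as the paper: read off $c(\psi)$ from the resolution diagram \eqref{eqn:HLcremona}, identify the exceptional divisors birationally with $S_L\times\bP^1$ and $S_M\times\bP^1$, and deduce non-vanishing from the fact that for a very general degree-$12$ K3 surface one has $\Pic(R_L)=\bZ\cdot\Gamma$, so $R_M\not\cong R_L$, while K3 surfaces that become birational after multiplying by $\bP^1$ must be isomorphic. You merely spell out details the paper leaves implicit (irreducibility of the exceptional divisor of the blow-up of the non-normal $S_L$, and the MRC-quotient argument behind ``stably birational K3 surfaces are isomorphic''), which is fine.
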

\begin{proof}
By Torelli theorem, the Picard rank of a very general projective K3 surface of degree $12$ over $\bC$ is $1$. By Theorem~\ref{theo:HLmain}, we know that $S_L$ is not isomorphic to $S_M$. It follows that $S_L$ and $S_M$ are not stably birational, since stably birational K3 surfaces are isomorphic. From diagram~\eqref{eqn:HLcremona}, we see that $c(\psi)=[S_L\times\bP^1]-[S_M\times\bP^1]\neq 0$.
\end{proof}
 

\begin{rema}
\label{rem-points-span-plane}
When $\Pic(R_L)=\bZ\cdot \Gamma$, we call the map $\psi$ in Theorem~\ref{theo:HLmain} an {\em HL-Cremona transformation.} We refer to an HL-Cremona transformation constructed from a very general K3 surface of degree $12$ as a {\em very general HL-Cremona transformation}. For a very general HL-Cremona transformation, we may assume that the span of the singular points $p_1,p_2,p_3$ of the surface $S_L$ (resp., $q_1,q_2,q_3$ of the surface $S_M$) is a plane in $\bP^4$, denoted by $\Pi_L$ (resp., $\Pi_M$). In addition, we may also assume that the $\Pi_L$ intersects $S_L$ (resp., $\Pi_M$ intersects $S_M$) in a zero-dimensional set. 
To justify these assumptions, one can check that they are open conditions which hold in the explicit example constructed in \cite[Section 2]{HL}. Therefore they are also true for a very general HL-Cremona transformation.
In what follows, we work with a very general HL-Cremona transformation.
\end{rema}

\begin{prop}\label{prop:K3}
Let $\psi$ be a very general HL-Cremona transformation. 
Then there exist hyperplanes $H_L$ and $H_M$, and quartic hypersurfaces $B_L$ and $B_M$ in $\bP^4$ such that $\psi$ extends to a crepant birational map between log Calabi-Yau pairs 
$$
\psi: (\bP^4,H_L+B_L)\dashrightarrow(\bP^4,H_M+B_M).
$$
Moreover, there exist crepant birational maps $\varphi,\varphi',\eta,\eta'$ and 
$$
\sigma:=\eta'\circ\varphi'\circ\psi\circ\varphi^{-1}\circ\eta^{-1}
$$
with the following commutative diagram
\begin{equation}\label{eqn:K3dia}
    \begin{tikzcd}
(S_L\subset \bP^4, H_L+B_L)\ar[rr, "\psi", dashed]\ar[dd,"\varphi",dashed]&  &(S_M\subset\bP^4, H_M+B_M) \ar[dd ,"\varphi'",dashed] \\
\\
(\bP^4,\Xi_L+\Xi_L'+Z_L)\ar[dd,"\eta", dashed]&&(\bP^4,\Xi_M+\Xi_M'+Z_M)\ar[dd, "\eta'", dashed]\\
\\
(\bP^4,\Delta_4)\ar[rr, "\sigma",dashed]&&(\bP^4,\Delta_4)
\end{tikzcd}
\end{equation}
where 
\begin{itemize}
    \item $ \Xi_L, \Xi_L', \Xi_M, \Xi_M'$ are hyperplanes in $\bP^4$,
    \item $Z_L$ and $Z_M$ are cones over smooth cubic surfaces,
    \item $(\bP^4,\Delta_4)$ is the standard toric pair.
\end{itemize}
In particular, $\sigma$ is a crepant birational automorphism of the standard  4-dimensional toric pair $(\bP^4,\Delta_4)$.
\end{prop}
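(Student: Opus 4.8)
\emph{The plan} is to build the diagram~\eqref{eqn:K3dia} square by square, exactly as in the proof of Proposition~\ref{prop:ellip}: first enlarge $\psi$ to a crepant birational map of log Calabi--Yau pairs by adjoining boundary divisors, and then link both resulting pairs to the standard toric pair $(\bP^4,\Delta_4)$ through a chain of crepant modifications with trivial $\mathbf c$-invariant, so that $\mathbf c(\sigma)=\mathbf c(\psi)\neq 0$ in the end. Since $\Bbbk=\bC$, fields of definition play no role.

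\emph{First square.} Let $X$, $\pi_L$, $\pi_M$, $E_L$, $E_M$ and the planes $\Pi_L\supset\Sing(S_L)$, $\Pi_M\supset\Sing(S_M)$ be as in Theorem~\ref{theo:HLmain} and Remark~\ref{rem-points-span-plane}. Fix a general hyperplane $H_M\supset\Pi_M$ and let $B_L\subset\bP^4_L$ be its strict transform under $\psi^{-1}$; since $\psi$ is given by the linear system $\mathcal L_1$ of quartics through $S_L$, this $B_L$ is a quartic through $S_L$, smooth away from $S_L$ except for ordinary double points at the three transverse double points $p_i\in\Sing(S_L)$. Symmetrically fix a general $H_L\supset\Pi_L$ and put $B_M:=\psi(H_L)$, a quartic through $S_M$ with the same local behaviour along $S_M$. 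Then $K_{\bP^4_L}+H_L+B_L\sim0$ and $\psi_*(H_L+B_L)=H_M+B_M$; because $H_L$ and $B_L$ have multiplicity $0$ and $1$ along $S_L$, the discrepancy of $E_L$ over $(\bP^4_L,H_L+B_L)$ equals $2-1=1$, and likewise for $E_M$, so on $X$ no exceptional divisor enters the log pullback and $\psi$ is crepant. For log canonicity, $(\bP^4_L,B_L)$ is klt (a quartic with only nodes), while $(H_L,\,B_L|_{H_L})\cong(\bP^3,\ \text{a quartic surface with three }\sA_1\text{-points at the }p_i)$ is log canonical; inversion of adjunction then makes $(\bP^4_L,H_L+B_L)$ log canonical, with each $p_i$ a $0$-dimensional lc-center, hence of coregularity $0$ — and symmetrically for $(\bP^4_M,H_M+B_M)$. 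Finally, $E_L$ and $E_M$ are birational to $S_L\times\bP^1$ and $S_M\times\bP^1$ and do not lie in the boundary, so, using the corollary preceding Remark~\ref{rem-points-span-plane} and Remark~\ref{rema:samec},
\[
0\neq\mathbf c(\psi)=(S_L\times\bP^1,0)-(S_M\times\bP^1,0)\in\mathbf{DivBurn}_3(\bC).
\]

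\emph{Remaining squares.} Near each $p_i$ the divisor $H_L+B_L$ has multiplicity $3$, precisely the configuration handled by Ducat's unprojection \cite{Du24} (the mechanism behind the maps $\psi,\eta_1$ in the proof of Proposition~\ref{prop:ellip}). Applying it at $p_1,p_2,p_3$ yields a crepant birational map $\varphi\colon(\bP^4,H_L+B_L)\dashrightarrow(\bP^4,\Xi_L+\Xi_L'+Z_L)$ with $\Xi_L,\Xi_L'$ hyperplanes and $Z_L$ a cone over a smooth cubic surface, arranged so that a general hyperplane section exhibits the line-and-conic (or triangle) configuration of Lemma~\ref{lemm:corregcubic}; since the exceptional loci of $\varphi$ and $\varphi^{-1}$ agree, $\mathbf c(\varphi)=0$, and the same recipe gives $\varphi'$ with $\mathbf c(\varphi')=0$ on the $M$-side. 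By Lemma~\ref{lemm:corregcubic} the pair $(\bP^4,\Xi_L+\Xi_L'+Z_L)$ is a coregularity-$0$ log Calabi--Yau pair of toric type, hence crepant equivalent to $(\bP^4,\Delta_4)$ (cf.~\cite{LMV24}); fixing a crepant map $\eta$ realising this equivalence — an explicit chain of blow-ups of strata followed by the toric identification, as for $\eta_1,\eta_2$ in Proposition~\ref{prop:ellip} — and the matching $\eta'$, the isomorphism between the two cone models lets one arrange $\mathbf c(\eta^{-1})+\mathbf c(\eta')=0$. Then $\sigma:=\eta'\circ\varphi'\circ\psi\circ\varphi^{-1}\circ\eta^{-1}$ is a crepant birational self-map of $(\bP^4,\Delta_4)$ fitting into \eqref{eqn:K3dia}, and combining the above vanishings along the diagram as in the proof of Proposition~\ref{prop:ellip} gives $\mathbf c(\sigma)=\mathbf c(\psi)\neq0$.

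\emph{Main difficulty.} The crux is the explicit birational geometry of the last two squares: identifying the precise sequence of Ducat-type unprojections that converts the three nodes of $B_L$ into the single cone $Z_L$, and checking at each stage that the link is crepant, that its $\mathbf c$-invariant vanishes, and that the intermediate boundary is exactly two hyperplanes plus a cone over a smooth cubic surface realising Lemma~\ref{lemm:corregcubic}. The log canonicity and coregularity computation of the first square is comparatively routine, once the singularities of the special quartics $B_L,B_M$ are pinned down (in particular, that $B_M=\psi(H_L)$ acquires no worse singularities than the forced double points along $\Sing(S_M)$).
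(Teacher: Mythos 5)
There is a genuine gap, and it sits exactly where you yourself flag ``the crux.'' Your choice of boundary is too generic: you take $H_M$ to be a \emph{general} hyperplane through $\Pi_M$ and set $B_L=\psi^{-1}_*(H_M)$, and you assert that $B_L$ then has ordinary double points at the $p_i$ (so that $H_L+B_L$ has multiplicity $3$ there). The paper's construction requires a very specific choice: $H_M$ is the unique hyperplane containing the quadric $\pi_M(Q_1)$ (the image of the exceptional quadric over $p_1$), and it is precisely this choice that forces $\mathrm{mult}_{p_1}B_L=3$ (Proposition~\ref{prop:mult3}, whose proof uses that the strict transform of $H_M$ on $X$ contains $Q_1$ — false for a general $H_M\supset\Pi_M$). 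The multiplicity-$3$ point on $B_L$ is what makes a \emph{single} explicit substitution $\varphi\colon x_1\mapsto x_1+h_4/h_3$ turn $B_L=\{x_1h_3+h_4=0\}$ into a hyperplane plus a cone $Z_L=\{h_3=0\}$ over a smooth cubic. With a quartic that is merely nodal at the $p_i$ (quadratic in $x_1$), no such substitution exists, and your proposed ``sequence of Ducat-type unprojections converting the three nodes into the single cone $Z_L$'' is not constructed anywhere — but producing exactly that boundary shape is the content of the proposition, not a detail that can be deferred.

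The second gap is the passage from $(\bP^4,\Xi_L+\Xi_L'+Z_L)$ to $(\bP^4,\Delta_4)$. You invoke ``coregularity $0$ and rational, hence crepant equivalent to the standard toric pair (cf.\ \cite{LMV24})'', but no such general theorem is available in dimension $4$ (even in dimension $3$ this is Ducat's conjecture, not a theorem). The paper instead exploits the explicit shape $\{x_1x_2h_3=0\}$ of the boundary: $\eta_1$ splits off a $\bP^1$-factor to reach $(\bP^1\times\bP^3,\Delta')$, and then Ducat's three-dimensional Theorem~1.2 is applied to $(\bP^3,\{x_2h_3=0\})$; this in turn requires knowing that $\Xi_L\cap\Xi_L'\cap Z_L$ is a triangle of lines (Lemma~\ref{lemm:threelineint}), which rests on the description $B_L\cap H_L=\pi_L(K_1)\cup\Pi_L\cup\Lambda_L$ (Proposition~\ref{prop:HLBLint}) — again unavailable for your generic boundary. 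Finally, note that your claimed cancellation $\mathbf c(\eta^{-1})+\mathbf c(\eta')=0$ (via an ``isomorphism between the two cone models,'' which need not exist since the cubic cones on the two sides are unrelated) is not needed for this proposition and is not how the paper proceeds: the non-vanishing of $\mathbf c(\sigma)$ is a separate statement (Proposition~\ref{prop:K3cmap}) proved by projecting onto the span of $(S_L\times\bP^1,0)-(S_M\times\bP^1,0)$ and observing that the $\eta$-contributions only involve classes of curves times $\bP^2$ or $(\bP^3,0)$.
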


\begin{prop}\label{prop:K3cmap}
    Let $\sigma$ be the map as in Proposition~\ref{prop:K3}. Then we have 
$$
0\ne\mathbf{c}(\sigma)\in\dBurn_3(\bC).
$$
\end{prop}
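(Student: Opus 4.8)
The strategy is to use that $\mathbf{c}$ is additive under composition of crepant birational maps — the cocycle property of \cite[\S7]{CLKT}, already exploited to reduce to $\mathbf{c}(\sigma)=\mathbf{c}(\varphi)$ in the proof of Proposition~\ref{prop:ellip} — so that the factorization $\sigma=\eta'\circ\varphi'\circ\psi\circ\varphi^{-1}\circ\eta^{-1}$ coming from diagram~\eqref{eqn:K3dia} gives
\[
\mathbf{c}(\sigma)=\mathbf{c}(\psi)+\mathbf{c}(\varphi^{-1})+\mathbf{c}(\eta^{-1})+\mathbf{c}(\varphi')+\mathbf{c}(\eta')\in\dBurn_3(\bC).
\]
By Lemma~\ref{lemm:cdiv} the class $\mathbf{c}(\sigma)$ lies in the subgroup generated by the pairs $(E,0)$ with $\dim E=3$, and (cf.\ Remark~\ref{rema:samec}) it maps to the Lin--Shinder invariant $c(\sigma)$ of the underlying birational self-map of $\bP^4$ under $(E,0)\mapsto[E]$. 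Since $\dBurn_3(\bC)$ is free abelian on crepant-equivalence classes, it suffices to exhibit a single generator occurring in $\mathbf{c}(\sigma)$ with nonzero coefficient, and the plan is to produce it from the middle map $\psi$.

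First I would compute $\mathbf{c}(\psi)$ for $\psi\colon(\bP^4,H_L+B_L)\dashrightarrow(\bP^4,H_M+B_M)$. It is resolved by diagram~\eqref{eqn:HLcremona}, in which $\pi_L$ is the blow-up of the non-normal surface $S_L$ and $\pi_M$ the blow-up of $S_M$; the unique $\pi_L$-exceptional prime divisor $E_L$ is a $\bP^1$-bundle over the smooth locus of $S_L$, hence birational to $S_L\times\bP^1$, and since $S_L$ is the proper transform of the K3 surface $R_L$ under a general linear projection (Theorem~\ref{theo:HLmain}), $S_L$ is birational to $R_L$, so $E_L$ is birational to $R_L\times\bP^1$; symmetrically $E_M$ is birational to $R_M\times\bP^1$. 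Because $B_L$ contains the codimension-two locus $S_L$ with multiplicity one, the discrepancy of $E_L$ over $(\bP^4,H_L+B_L)$ is positive, so $E_L$ is not a coefficient-one component of the log-pullback boundary; hence $D_{E_L}=0$, and likewise $D_{E_M}=0$. After the cancellation of the terms supported on boundary strata (the mechanism of Lemma~\ref{lemm:cdiv}), one is left with
\[
\mathbf{c}(\psi)=(E_L,0)-(E_M,0)+r_\psi,
\]
where $r_\psi$ is supported on classes of \emph{rational} threefolds.

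Next I would show that the four remaining terms are likewise supported on rational threefolds. By the construction of Proposition~\ref{prop:K3}, each of $\varphi,\varphi',\eta,\eta'$ is a crepant modification between log Calabi--Yau pairs whose boundary components are hyperplanes and cones over smooth cubic surfaces — all rational threefolds — and it is resolved by blow-ups and blow-downs along rational centres (cubic surfaces, their cone vertices, hyperplanes, curves, points), exactly as for the auxiliary maps of Section~\ref{sect:ellip} whose invariants vanish in \eqref{eqn:trivcellippi}--\eqref{eqn:trivcelleta}; crucially the surfaces $S_L,S_M$ are carried along throughout as strata of the quartic boundary components $B_L,B_M$ (resp.\ of their images) and are never centres of these maps, so no threefold birational to $R_L\times\bP^1$ or to $R_M\times\bP^1$ occurs among their exceptional divisors. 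To conclude, note that for a very general HL-Cremona transformation $\Pic(R_L)=\bZ\cdot\Gamma$, so $R_M$ is the unique Fourier--Mukai partner of $R_L$ not isomorphic to it (Theorem~\ref{theo:HLmain}); non-isomorphic K3 surfaces are non-birational, and since the base of the MRC fibration is a birational invariant, $R_L\times\bP^1$ and $R_M\times\bP^1$ are non-birational and both non-rational. Hence $(E_L,0)$ is a free generator of $\dBurn_3(\bC)$ distinct from $(E_M,0)$ and from every generator $(Y,D_Y)$ with $Y$ rational; combining the three steps, its coefficient in $\mathbf{c}(\sigma)$ equals $+1$, so $\mathbf{c}(\sigma)\neq 0$.

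The main obstacle is the third step: rigorously verifying that $\varphi,\varphi',\eta,\eta'$ neither extract nor contract the K3-fibered divisors $E_L,E_M$ — equivalently, that every genuinely new exceptional divisor appearing in their resolutions is a rational threefold — requires the explicit description of these maps from the proof of Proposition~\ref{prop:K3}. This is the higher-dimensional counterpart of the vanishing computations \eqref{eqn:trivcellippi}--\eqref{eqn:trivcelleta} of Section~\ref{sect:ellip}, and once it is in place the non-vanishing of $\mathbf{c}(\sigma)$ is forced purely by the freeness of $\dBurn_3(\bC)$.
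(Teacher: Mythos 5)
Your proposal is correct and follows essentially the same route as the paper: decompose $\mathbf{c}(\sigma)$ by additivity, identify the $\psi$-contribution with the difference of classes of threefolds birational to $S_L\times\bP^1$ and $S_M\times\bP^1$, observe that the remaining maps contribute only classes of rational threefolds (in the paper this is exactly \eqref{eqn:trivcvarphi}, \eqref{eqn:trivcvarphiprime}, \eqref{eqn:ceta1}, \eqref{eqn:cideta2}, \eqref{eqn:ceta4}, established during the construction in Proposition~\ref{prop:K3}), and conclude by separating these generators via the non-isomorphic Fourier--Mukai partners $R_L\not\cong R_M$ and the MRC-quotient argument. The step you flag as the "main obstacle" is precisely the content the paper's proof simply cites from that construction, so nothing essential is missing.
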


The rest of the section is devoted to a constructive proof of Proposition~\ref{prop:K3}, with the construction of each map in the diagram \eqref{eqn:K3dia} explained in different subsections. We begin with a detailed analysis of the geometry of HL-Cremona transformations. 

\subsection{Geometry of HL-Cremona transformations}\label{subsect:HLcre}
Here we recall some facts from \cite{HL}. First, there is a factorization of $\pi_L$ into a sequence of blowups in smooth centers, as in the following diagram extending  \eqref{eqn:HLcremona}:

\begin{equation}\label{eqn:HLcremonafact}
\begin{tikzcd}
&P'\ar[rd,"\xi_L"]\ar[ld, swap, "\beta_L"]&\\
P\ar[rd, "\alpha_L"]&& X \ar[rd, "\pi_M"] \ar[dl, swap, "\pi_L"]   \\
&S_L\subset \bP^4 \ar[rr, dashed, "\psi"] & & \bP^4 \supset S_M
\end{tikzcd}
\end{equation}
where 
\begin{itemize}
    \item $P$ is the blowup of $\bP^4$ at three singular points $p_1, p_2, p_3$ of $S_L$. Let $E_1, E_2, E_3$ be the corresponding exceptional divisors, and $S_L'$ be the strict transform of $S_L$ in $P$. Then $E_i\simeq\bP^3$ and $S_L'$ intersects $E_i$ in two skew lines, for $i=1,2,3$;
    \item $P'$ is the blowup of $P$ along $S_L'$. Denote by $E$ the exceptional divisor of $\beta_L$. We denote the strict transform of $E_1, E_2$ and $E_3$ by $E_1'$, $E_2'$ and $E_3'$, respectively. Then $E_i'$ is isomorphic to $\bP^3$ blown up along two skew lines, and thus $E_i'$ has the structure of a $\bP^1$-bundle over $\bP^1\times\bP^1$, for $i=1,2,3$;
    \item The map $\xi_L\colon P'\to X$ is a blowdown of each $E_i'$ to a   surface $Q_i\subset X$ where $Q_i\simeq \bP^1\times\bP^1$. Let $E_X$ be the image of $E$ on $X$. By construction, each $Q_i$ is contained in $E_X$. Note that $\pi_L(Q_i)=p_i$ and $E_X$ is the exceptional divisor of $\pi_L$. Let $K_i=\pi_M^{-1}(q_i), i=1,2,3.$ By symmetry, each $K_i$ is also isomorphic to $\bP^1\times\bP^1$. 
\end{itemize}

We also have the following diagram involving the K3 surface $R_L$
\begin{equation}\label{eqn:K3-surface-diagram}
\begin{tikzcd}
S_L' \ar[rr, "\kappa"] \ar[drr, swap, "\alpha_L"] &  & \Sigma_L \ar[d, "\nu"] \ar[d, rr, "\mu"]\ & & R_L \ar[dll, dashed, "\pi"]\\
 & & S_L & &
\end{tikzcd}
\end{equation}
where 
\begin{itemize}
\item $\pi$ is a projection from 3 general points on $R_L$;
\item 
$\mu$ is the blow up of these 3 points, i.e., the base locus of $\pi$;
\item 
$\nu$ is the normalization which maps some points $p_{i,j}$ to the transverse double points $p_i$, for $j=1,2$, $i=1,2,3$;
\item 
$\kappa$ is the blow up of the points $p_{i,j}$ for $i=1,2,3$, $j=1,2$;
\item 
$\alpha_L$ is as in the diagram \eqref{eqn:HLcremonafact}.
\end{itemize}

A diagram similar to \eqref{eqn:K3-surface-diagram} exists for the surface $S_M$, so in what follows we will freely use the notation $ S_M'$, $\Sigma_M$, $R_M$. 

Here we introduce more notation. Let $\Gamma_L$ be the image under $\nu$ in $S_L$ of the polarization of $R_L$, and $\tilde \Gamma_L$ its preimage in $X$ via the map $\pi_L$. Similarly, $\Gamma_M$ and $\tilde \Gamma_M$ refer to the corresponding objects coming from $R_M$. Denote by $F_1, F_2$ and $F_3$ the images in $S_L$ of the $\mu$-exceptional curves  under $\nu$, and their preimages in $X$ by $\tilde F_1, \tilde F_2, \tilde F_3$. Note that $F_1, F_2$ and $F_3$ are $(-1)$-curves on $S_L.$
By symmetry, there are also three $(-1)$-curves on $S_M$. We refer to them as $ G_1,G_2, G_3$, and to their preimages in $X$ as $\tilde G_1, \tilde G_2, \tilde G_3$. 
We also denote by $L$ (resp., $M$) the class of a general hyperplane section on $\bP^4$ containing $S_L$ (resp., $S_M$). Let $L_X$ (resp., $M_X$) be the pullback of the class $L$ (resp., $M$) to $X$.

For clarity, we collect notation that we will use for the remainder of this section.
\begin{itemize}
    \item $L$, $M$: general hyperplane sections on $\bP^4$; 
    \item $F_1, F_2, F_3$: $(-1)$-curves on $S_L$;
    \item $G_1, G_2, G_3$: $(-1)$-curves on $S_M$;
      \item $\Gamma_L$, $\Gamma_M$: images of polarizations of $R_L$ and $R_M$ in $S_L$ and $S_M$;
    \item $ \tilde \Gamma_L,\tilde\Gamma_M,\tilde F_i, \tilde G_i$: preimages of $\Gamma_L, \Gamma_M, F_i, G_i$ in $X$;
      \item $Q_1, Q_2, Q_3$: quadric surfaces in $X$ above $p_1, p_2, p_3$;
    \item $K_1, K_2, K_3$: quadric surfaces in $X$ above $q_1, q_2, q_3$;
    \item $E_1, E_2, E_3$: exceptional divisors in $P$ above $p_1, p_2, p_3$;
    \item $S_L'$: the strict transform of $S_L$ in $P$;
    \item $E_1', E_2', E_3'$: strict transforms of $E_1, E_2, E_3$ in $P'$;
    \item $L_X, M_X, L', M'$: pullback of $L$ and $M$ in $X$ and $P'$ respectively;
    \item $E$: the exceptional divisor in $P'$ above $S_L'$.
\end{itemize}

\begin{lemm}[{\cite[Section 3]{HL}}]
\label{lem-K3-intersection-theory}
    Let
    $
    \rH^4(X,\bZ)_{\mathrm{alg}}
    $ be the lattice spanned by the algebraic classes in the middle cohomology $\rH^4(X,\bZ)$. Then $
    \rH^4(X,\bZ)_{\mathrm{alg}}
    $  is spanned by the classes 
    $$
    L_X^2, \tilde \Gamma_L, \tilde F_1,\tilde F_2,\tilde F_3, Q_1, Q_2, Q_3
    $$
    with the entries of the intersection matrix given by
    \begin{align*}
       L_X^4=1,\quad \tilde \Gamma_L^2=-12, \quad\tilde F_i^2=1, \quad Q_i^2=1, \,\,i=1,2,3\quad \text{ and }\, 0 \text{ in other entries}.
    \end{align*}
   By symmetry,  $
    \rH^4(X,\bZ)_{\mathrm{alg}}
    $ is also spanned by the classes coming from $S_M$:
    $$
    M_X^2, \tilde \Gamma_M, \tilde G_1,\tilde G_2,\tilde G_3, K_1, K_2, K_3
    $$
    with a similar intersection matrix. The two sets of generators are related by
    $$
    \begin{pmatrix}
        M_X^2\\\tilde\Gamma_M\\\tilde G_1\\\tilde G_2\\\tilde G_3\\K_1\\K_2\\K_3
    \end{pmatrix}=
    \begin{pmatrix}
    7&-3&4&4&4&2&2&2\\
    36&-17&24&24&24&12&12&12\\
    4&-2&3&3&3&2&1&1\\
    4&-2&3&3&3&1&2&1\\    
    4&-2&3&3&3&1&1&2\\
    2&-1&2&1&1&1&1&1\\
    2&-1&1&2&1&1&1&1\\
    2&-1&1&1&2&1&1&1\\
   \end{pmatrix}
   \begin{pmatrix}
       L_X^2\\\tilde\Gamma_L\\\tilde F_1\\\tilde F_2\\\tilde F_3\\Q_1\\Q_2\\Q_3
   \end{pmatrix}
    $$
\end{lemm}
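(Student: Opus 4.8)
We recover the intersection theory of \cite[Section 3]{HL}; here is the structure of the argument I would carry out. Everything is driven by the factorization \eqref{eqn:HLcremonafact}: $\pi_L$ is the composite $X \xleftarrow{\xi_L} P' \xrightarrow{\beta_L} P \xrightarrow{\alpha_L}\bP^4$, where $\alpha_L$ blows up the three points $p_1,p_2,p_3$, $\beta_L$ blows up the smooth surface $S_L'$, and $\xi_L$ contracts the three divisors $E_i'$ along their $\bP^1$-bundle structure over $\bP^1\times\bP^1$; equivalently $P'=\Bl_{Q_1\sqcup Q_2\sqcup Q_3}(X)$ and $E_i'=\bP(N_{Q_i/X})$. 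Applying the blow-up formula for integral cohomology to $\alpha_L$, $\beta_L$ and $\xi_L$ in turn, and using that for a very general HL-Cremona transformation $\Pic(R_L)=\bZ\cdot\Gamma$, so that $S_L'$ — which by \eqref{eqn:K3-surface-diagram} is $R_L$ blown up at $3+6=9$ points — has Picard rank $10$, one gets
\[
\mathrm{rk}\,\rH^4(P',\bZ)_{\mathrm{alg}}=1+3+10=14,\qquad \mathrm{rk}\,\rH^4(X,\bZ)_{\mathrm{alg}}=14-2\cdot 3=8.
\]
The eight classes $L_X^2,\tilde\Gamma_L,\tilde F_1,\tilde F_2,\tilde F_3,Q_1,Q_2,Q_3$ are visibly algebraic; that they form a $\bZ$-basis of $\rH^4(X,\bZ)_{\mathrm{alg}}$ I would check by pulling them back along $\xi_L^*$, which realizes $\rH^4(X,\bZ)_{\mathrm{alg}}$ as a direct summand of $\rH^4(P',\bZ)_{\mathrm{alg}}$, and expanding in the explicit blow-up basis of the latter.

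Next, the intersection matrix. All pairings can be obtained by pushing forward along the birational morphism $\pi_L$ (projection formula). The off-diagonal entries vanish for geometric reasons: $\tilde F_i$, $Q_i$, $\tilde\Gamma_L$ are $\pi_L$-exceptional (over the curve $F_i$, the point $p_i$, and the curve $\Gamma_L$, respectively), so their products with $L_X^2=\pi_L^*(H^2)$ are $0$; and for a general HL-Cremona transformation the curves $\Gamma_L,F_1,F_2,F_3$ are pairwise disjoint in $S_L$ and miss the double points $p_1,p_2,p_3$, so the corresponding surfaces in $X$ are pairwise disjoint, and each $\tilde F_i$ is disjoint from every $Q_j$. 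Also $L_X^4=H^4=1$. The three diagonal entries are the real content: $Q_i^2=\int_{Q_i}c_2(N_{Q_i/X})$ is read off from the rank-$2$ bundle $N_{Q_i/X}$, pinned down by $\bP(N_{Q_i/X})=E_i'\cong\Bl_{\text{two skew lines}}\bP^3$; $\tilde F_i^2$ is computed the same way from $N_{F_i/S_L}$ and the normal bundle of the exceptional divisor of $\pi_L$, giving $\tilde F_i^2=-F_i^2=1$; and $\tilde\Gamma_L^2=-\Gamma^2=-12$ comes from $\Gamma^2=12$ on the K3 surface $R_L$ together with the $\bP^1$-bundle structure of $\tilde\Gamma_L\to\Gamma_L$ inside the exceptional divisor of $\pi_L$, the minus sign being the standard one for a projectivized rank-$2$ bundle over a surface. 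By the symmetry $L\leftrightarrow M$ the $M$-side generators have the analogous Gram matrix.

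Finally, the transition matrix, which I expect to carry most of the work. Given two bases of the rank-$8$ lattice, its entries are the intersection numbers of each $M$-side generator with the $L$-side basis, divided by the corresponding diagonal Gram entries. The inputs are: the relation $M_X=4L_X-E_X$ in $\Pic(X)$ (with $E_X$ the $\pi_L$-exceptional divisor), valid because $\psi$ (resp.\ $\psi^{-1}$) is given by the quartics through $S_L$ (resp.\ $S_M$), which reduces every product involving $M_X$ to one involving $L_X$ and $E_X$; and the classes of the $\pi_M$-exceptional surfaces $\tilde\Gamma_M,\tilde G_i,K_i$ as cycles on $X$, read off from the $S_M$-side copy of the factorization \eqref{eqn:HLcremonafact}. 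The cleanest way to organize this, and to control signs, is to perform every computation on the common smooth model $P'$ — where $\alpha_L\beta_L$ and the $S_M$-side maps are honest blow-ups and $\rH^4$ is transparent via iterated blow-up formulas — and to push down to $X$ only at the end; the resulting $8\times 8$ integer matrix is then cross-checked against the constraints that it lie in $\GL_8(\bZ)$ and conjugate $\mathrm{diag}(1,-12,1,1,1,1,1,1)$ to itself. The rank count and the off-diagonal vanishing are soft; the genuine obstacle is bookkeeping the three normal bundles precisely enough for the diagonal entries and, above all, tracking how the blow-down $\xi_L$ interacts with the $S_M$-side factorization to produce the many nonzero entries of the transition matrix.
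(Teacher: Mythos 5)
The paper does not prove this lemma at all: it is imported verbatim from \cite[Section 3]{HL} (as is the companion Lemma~\ref{lem-intersection-theory}), so there is no in-paper argument to compare yours against; the comparison has to be with the cited computation of Hassett--Lai, and your sketch is structurally faithful to it. The sound parts: the rank count $1+3+10=14$ on $P'$ and $14-2\cdot 3=8$ on $X$ via the blow-up formula (using $\Pic(R_L)=\bZ\Gamma$ and the nine-point blow-up $S_L'\to R_L$ from \eqref{eqn:K3-surface-diagram}) is correct; the off-diagonal vanishing via the projection formula ($L_X^2\cdot(\pi_L\text{-exceptional surface})=0$) and disjointness of $\Gamma_L, F_1,F_2,F_3$ and the nodes for a very general choice is correct; and the excess-intersection computation $[\pi_L^{-1}(C)]^2=\int c_1(p^*N_{C/S})\,c_1(\cO_E(E))=-C^2$ does give $\tilde F_i^2=1$ and $\tilde\Gamma_L^2=-12$. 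Your consistency check on the first row is also right (a general plane meets $S_L$ in $\deg S_L=9$ points, so $M_X^2L_X^2=16-9=7$).

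The shortfall is that the quantitatively hard content is announced but not carried out, and it does not follow formally from what you set up. Concretely: (i) $Q_i^2=\int_{Q_i}c_2(N_{Q_i/X})=1$ requires actually pinning down the rank-two bundle $N_{Q_i/X}$ (identifying which $\bP^1$-bundle over $\bP^1\times\bP^1$ the divisor $E_i'\cong\Bl_{\,l_1\sqcup l_2}\bP^3$ is, plus an adjunction computation of $\det N$ to fix the twist); and (ii) the relation $M_X=4L_X-E_X$ only controls the first two rows of the transition matrix ($M_X^2$ and, with more work, $\tilde\Gamma_M$); the remaining six rows require expressing the classes of the $\pi_M$-exceptional surfaces $\tilde G_i$ and $K_i$ as cycles in the $L$-side basis, i.e.\ genuinely tracking the $S_M$-side factorization of \eqref{eqn:HLcremonafact} on the common model $P'$, which is precisely the nontrivial bookkeeping in \cite[Section 3]{HL} and is the part your proposal defers. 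So as a reconstruction of the cited computation your roadmap is on target and contains no wrong step, but as a standalone proof of the stated Gram and transition matrices it is incomplete: the eight diagonal entries are only $7/8$ verified and $62$ of the $64$ transition entries are unverified.
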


\begin{lemm}[{\cite[Corollary 2.2]{HL}}]
\label{lem-intersection-theory}
Intersection theory on $P'$ is as follows:
\[
L' E'_i=0, \quad E^3 E'_i=-4, \quad E^2(E'_i)^2=2, \quad E (E'_i)^3=0, \quad (E'_i)^4=-1, \quad (L')^3E =0.
\]

\end{lemm}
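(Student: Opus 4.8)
The plan is to pull every product on the fourfold $P'$ back through the two blow-downs $\beta_L\colon P'\to P$ and $\alpha_L\colon P\to\bP^4$, reducing each entry to an intersection computation on $P$ and ultimately on the exceptional locus $E_i\cong\bP^3$ or on the strict transform $E_i'$. The key structural input, which I would establish first, is that the blown-up center $S_L'$ is not contained in $E_i$ but meets it transversally along the two skew lines $\ell_{i,1}\cup\ell_{i,2}=S_L'\cap E_i$; hence $\mathrm{mult}_{S_L'}(E_i)=0$ and the strict transform coincides with the total transform as divisor classes, $E_i'=\beta_L^*E_i$. Geometrically $\beta_L$ restricts over $E_i$ to the blow-up $\Bl_{\ell_{i,1}\cup\ell_{i,2}}\bP^3$, so that $E|_{E_i'}=D_{i,1}+D_{i,2}$ is the sum of the two exceptional divisors of this blow-up, with $D_{i,1}\cdot D_{i,2}=0$ since the lines are disjoint.

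Next I would record the pushforward formulas for the blow-up $\beta_L$ of the smooth surface $S_L'$ (codimension $2$) in $P$: with $\cN=\cN_{S_L'/P}$ one has $\beta_{L*}E=0$, $\beta_{L*}(E^2)=-[S_L']$ and $\beta_{L*}(E^3)=s_1(\cN)\cap[S_L']$, where $s(\cN)=c(\cN)^{-1}$. Combined with $E_i'=\beta_L^*E_i$ and the projection formula, the four ``easy'' entries follow at once. Since $L'=(\alpha_L\beta_L)^*H$ is a pullback from $\bP^4$ and $E$ is $\beta_L$-exceptional, $\beta_{L*}E=0$ yields $(L')^3E=0$ and $E(E_i')^3=\beta_{L*}E\cdot E_i^3=0$; similarly $L'|_{E_i'}$ is pulled back from the point $p_i$ and is therefore trivial, so the codimension-two class $L'E_i'$ vanishes. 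Finally $(E_i')^4=\beta_L^*(E_i^4)$ equals the standard value $E_i^4=(-1)^3=-1$ for the exceptional $\bP^3$ of a point blow-up of $\bP^4$.

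The two substantive numbers are $E^2(E_i')^2$ and $E^3E_i'$. For the first, $E^2(E_i')^2=\beta_{L*}(E^2)\cdot E_i^2=-[S_L']\cdot E_i^2$, and I compute $[S_L']\cdot E_i^2$ on $E_i\cong\bP^3$: intersecting with $E_i$ first replaces $[S_L']$ by $[\ell_{i,1}]+[\ell_{i,2}]$, and since $\cO_{E_i}(E_i)=\cO_{\bP^3}(-1)$ each line contributes $-1$, giving $[S_L']\cdot E_i^2=-2$ and hence $E^2(E_i')^2=2$. For the last and hardest number I would instead restrict to $E_i'$ and use $E^3E_i'=\int_{E_i'}(E|_{E_i'})^3=\sum_j\int_{E_i'}D_{i,j}^3$, the cross terms vanishing by disjointness. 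Each $D_{i,j}$ is the exceptional divisor of the blow-up of the line $\ell_{i,j}\cong\bP^1$ in $\bP^3$, with normal bundle $\cN_{\ell_{i,j}/\bP^3}=\cO(1)^{\oplus2}$; the self-intersection of such an exceptional divisor in a threefold is $\int_{E_i'}D_{i,j}^3=-\deg\cN_{\ell_{i,j}/\bP^3}=-2$, so $E^3E_i'=-4$.

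The main obstacle is precisely this last value $\int_{E_i'}D_{i,j}^3=-2$, where all the sign and convention bookkeeping concentrates: one uses $\cO_{D_{i,j}}(D_{i,j})=\cO_{D_{i,j}}(-1)$ together with the Segre pushforward $g_*(\xi^{\,1+k})=s_k(\cN_{\ell_{i,j}/\bP^3})$ and $s_1(\cO(1)^{\oplus2})=-2[\mathrm{pt}]$ on $\bP^1$. Everything else is routine once $E_i'=\beta_L^*E_i$ and the two-skew-line description of $S_L'\cap E_i$ are in place. As a consistency check I would recompute $E^3E_i'$ through the pushforward route $\beta_{L*}(E^3)\cdot E_i=\int_{S_L'}s_1(\cN)\cdot(E_i|_{S_L'})$ and verify it again equals $-4$.
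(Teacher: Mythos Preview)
Your argument is correct. The paper does not supply a proof of this lemma but simply quotes it from \cite[Corollary~2.2]{HL}; your direct computation via $E_i'=\beta_L^*E_i$, the projection formula, and the Segre-class pushforward $\beta_{L*}(E^k)=(-1)^{k-1}s_{k-2}(\cN_{S_L'/P})\cap[S_L']$ is exactly the standard blow-up bookkeeping that underlies the cited result.
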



The following lemmas are crucial to our constructions.

\begin{lemm}
\label{lem-plane-class}
    Let $\Pi_L$ be the plane in $\bP^4$ spanned by the singular points $p_i$ of $S_L$, with $1\leq i\leq 3$. Then the class $\tilde{\Pi}_L\in\rH^4(X,\bZ)_{\mathrm{alg}}$ of the strict transform of $\Pi_L$ in $X$ under $\pi_L$ equals to
    $$
    \tilde{\Pi}_L=L_X^2-\sum_{i=1}^3Q_i.
    $$
\end{lemm}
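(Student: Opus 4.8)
The plan is to identify the class $\tilde\Pi_L$ by realizing it as the main component of the product of the strict transforms of two hyperplanes through $\Pi_L$. Since $\Pi_L\subset\bP^4$ is a linear subspace of codimension two, write $\Pi_L=H_1\cap H_2$ with $H_1,H_2$ two general members of the pencil of hyperplanes containing $\Pi_L$. As neither $H_i$ contains $S_L$, the total transform of $H_i$ under $\pi_L$ equals its strict transform $\tilde H_i$, so $[\tilde H_i]=\pi_L^*L=L_X$ in $\rH^2(X,\bZ)$ and hence $L_X^2=[\tilde H_1]\cdot[\tilde H_2]$ in $\rH^4(X,\bZ)$. Thus it suffices to identify the effective two-cycle $[\tilde H_1]\cdot[\tilde H_2]$.

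Next I would determine $\tilde H_1\cap\tilde H_2$ set-theoretically. Over $X\setminus E_X$ it equals $\Pi_L\setminus S_L$, whose closure is $\tilde\Pi_L$; any irreducible component contained in $E_X$ maps into $H_1\cap H_2\cap S_L=\Pi_L\cap S_L$, which by the generality assumptions of Remark~\ref{rem-points-span-plane} is a finite set, consisting of $\Sing(S_L)=\{p_1,p_2,p_3\}$ together with finitely many smooth points of $S_L$. A component of $\tilde H_1\cap\tilde H_2$ has dimension at least two, so it cannot sit over a smooth point (the fibre there is a $\bP^1$, which moreover already lies inside $\tilde\Pi_L$ in the local blowup picture), and over $p_i$ the only two-dimensional subvariety of the fibre is $Q_i\cong\bP^1\times\bP^1$ itself. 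Hence $\tilde H_1\cap\tilde H_2=\tilde\Pi_L\cup Q_1\cup Q_2\cup Q_3$ is pure of dimension two, so $\tilde H_1$ and $\tilde H_2$ meet properly and $[\tilde H_1]\cdot[\tilde H_2]=[\tilde\Pi_L]+\sum_{i=1}^3 m_i[Q_i]$, where $m_i$ is the intersection multiplicity along $Q_i$.

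The crux — and the step I expect to be the main obstacle — is to show $m_i=1$ for each $i$. I would verify this in the local model of a transverse double point: near $p_i$ the ideal of $S_L$ is $(z,w)\cap(x,y)$, the fibre $Q_i$ is the Segre quadric cut out by the four generators $xz,xw,yz,yw$, and $\Pi_L$ is a $2$-plane transverse to both branches, which may be taken to be $V(x-z,\,y-w)$. A direct check in each of the four affine charts of the blowup shows that the pullback of $(x-z,\,y-w)$ is a radical complete-intersection ideal whose two irreducible components are $\tilde\Pi_L$ and $Q_i$; localizing at the generic point of $Q_i$ turns this ideal into the maximal ideal, so the length is one and $m_i=1$. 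The necessary bookkeeping is to run through all four charts (so that no further component hidden over $p_i$ is overlooked), to observe that the three double points are interchangeable, and to recall from the local picture that the $\bP^1$-fibres over the auxiliary smooth points of $\Pi_L\cap S_L$ are absorbed into $\tilde\Pi_L$. Putting this together gives $L_X^2=[\tilde\Pi_L]+\sum_{i=1}^3[Q_i]$, that is, $\tilde\Pi_L=L_X^2-\sum_{i=1}^3 Q_i$. As a consistency check, the right-hand side pairs to $1$ with $L_X^2$, to $0$ with $\tilde\Gamma_L$ and with each $\tilde F_j$, and to $-1$ with each $Q_j$, in agreement with the intersection matrix of Lemma~\ref{lem-K3-intersection-theory}.
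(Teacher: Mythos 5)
Your proposal is correct and follows essentially the same route as the paper: the paper likewise identifies $L_X^2=\pi_L^*(L^2)$ with the cycle of the scheme-theoretic preimage $\pi_L^{-1}(\Pi_L)$ (citing a general pullback statement rather than writing $\Pi_L=H_1\cap H_2$ and intersecting strict transforms of hyperplanes), uses the finiteness of $\Pi_L\cap S_L$ from Remark~\ref{rem-points-span-plane} to see that the only codimension-two components are $\tilde{\Pi}_L$ and $Q_1,Q_2,Q_3$, and concludes $\tilde{\Pi}_L+\sum_{i=1}^3 Q_i=L_X^2$. Your chart computation of the multiplicity one along each $Q_i$ supplies a detail the paper leaves implicit; only note that the stated generality gives merely that $\Pi_L\cap S_L$ is zero-dimensional, not that $\Pi_L$ is the standard transverse plane $V(x-z,\,y-w)$, but the same localization at the generic point of $Q_i$ yields multiplicity one for any plane through $p_i$ containing neither branch, so this normalization is harmless.
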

\begin{proof}
By Remark \ref{rem-points-span-plane}, we know that $p_1,p_2,p_3$ indeed span a plane in $\bP^4$. 
By \cite[\href{https://stacks.math.columbia.edu/tag/0B0I}{Section 0B0I}]{stacks-project}, the pullback $\pi_L^*(\Pi_L)$ of the class $\Pi_L$ is equal to the class of the scheme-theoretic preimage $\pi_L^{-1}(\Pi_L)$ (where we consider only components of codimension $2$). Since $\Pi_L$ intersects $S_L$ in a zero-dimensional set containing $p_1,p_2,p_3$ by Remark \ref{rem-points-span-plane}, going along the left side of the diagram \eqref{eqn:HLcremonafact}, we find
\[
\tilde{\Pi}_L + \sum_{i=1}^3 Q_i = \pi_L^*(\Pi_L) = \pi_L^*(L^2) = (\pi_L^*(L))^2 = L_X^2. 
\]
\end{proof}

\begin{lemm}\label{lemm:piplane}
 Let $\Pi_L$ be the plane spanned by the singular points $p_1,p_2,p_3$ of $S_L$, and $\Pi_M$ be the plane spanned by the singular points $q_1,q_2,q_3$ of $S_M$. Then the strict transform $\psi_* \Pi_L$ is equal to $\Pi_M$. 
\end{lemm}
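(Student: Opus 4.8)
The plan is to compute the class of the strict transform $\tilde\Pi_L\subset X$ of $\Pi_L$ (with respect to $\pi_L$), push it forward along $\pi_M$, and then pin down the resulting surface as $\Pi_M$ by locating three points on it. Since $\Pi_L\cap S_L$ is finite by Remark~\ref{rem-points-span-plane}, the plane $\Pi_L$ is not contained in $S_L$ and $\tilde\Pi_L$ is not contained in the $\pi_L$-exceptional divisor, so $\psi_*\Pi_L=\pi_M(\tilde\Pi_L)$.

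By Lemma~\ref{lem-plane-class} we have $[\tilde\Pi_L]=L_X^2-Q_1-Q_2-Q_3$ in $\rH^4(X,\bZ)_{\mathrm{alg}}$. Write $[\tilde\Pi_L]=a\,M_X^2+R$ in the $M$-basis of Lemma~\ref{lem-K3-intersection-theory}, where $R$ is a $\bZ$-combination of $\tilde\Gamma_M,\tilde G_1,\tilde G_2,\tilde G_3,K_1,K_2,K_3$. Each of these seven classes is the $\pi_M$-preimage of a proper subvariety of $S_M$ --- a curve for $\tilde\Gamma_M$ and the $\tilde G_i$, a point for the $K_i$ --- hence is a $2$-cycle whose image under $\pi_M$ has dimension at most $1$, so $(\pi_M)_*R=0$; on the other hand $(\pi_M)_*M_X^2=M^2$ by the projection formula since $\pi_M$ is birational. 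Thus $(\pi_M)_*[\tilde\Pi_L]=a\,M^2$. Because the $M$-basis pairs with $M_X^2$ via $M_X^4=1$ and trivially against all the other generators, we have $a=[\tilde\Pi_L]\cdot M_X^2$; substituting $M_X^2=7L_X^2-3\tilde\Gamma_L+4(\tilde F_1+\tilde F_2+\tilde F_3)+2(Q_1+Q_2+Q_3)$ from the change-of-basis matrix together with the $L$-basis intersection numbers of Lemma~\ref{lem-K3-intersection-theory} gives $L_X^2\cdot M_X^2=7$ and $Q_i\cdot M_X^2=2$, whence
\[
a=[\tilde\Pi_L]\cdot M_X^2=L_X^2\cdot M_X^2-\sum_{i=1}^{3}(Q_i\cdot M_X^2)=7-6=1.
\]
So $(\pi_M)_*[\tilde\Pi_L]=M^2\neq 0$. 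In particular $\tilde\Pi_L$ is not contracted by $\pi_M$, and since $M^2$ is a generator of $\rH^4(\bP^4,\bZ)\cong\bZ$, the morphism $\pi_M$ maps $\tilde\Pi_L$ birationally onto $\psi_*\Pi_L$, which therefore has class $M^2$ and is an irreducible surface of degree $1$, i.e.\ a $2$-plane.

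It remains to show $\psi_*\Pi_L=\Pi_M$, and for this I would show $q_1,q_2,q_3\in\psi_*\Pi_L$. As $K_i=\pi_M^{-1}(q_i)$, it suffices to prove $\tilde\Pi_L\cap K_i\neq\emptyset$. If these two surfaces in the smooth fourfold $X$ were disjoint, their intersection number would vanish; but, using $K_i=2L_X^2-\tilde\Gamma_L+2\tilde F_i+\sum_{j\neq i}\tilde F_j+Q_1+Q_2+Q_3$ (the $K$-rows of the change-of-basis matrix) and the $L$-basis pairings, one computes
\[
[\tilde\Pi_L]\cdot K_i=L_X^2\cdot K_i-\sum_{j=1}^{3}(Q_j\cdot K_i)=2-3=-1\neq 0,
\]
identically for $i=1,2,3$. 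Hence $\tilde\Pi_L$ meets $K_i$, so $q_i\in\psi_*\Pi_L$. Since $q_1,q_2,q_3$ span the plane $\Pi_M$ (Remark~\ref{rem-points-span-plane}) and $\psi_*\Pi_L$ is a plane containing all three, $\psi_*\Pi_L=\Pi_M$.

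I expect the main obstacle to be the bookkeeping rather than any deep idea: one must be sure that $[\tilde\Pi_L]$ genuinely is the class of the \emph{strict} transform --- which is exactly what the finiteness of $\Pi_L\cap S_L$ in Remark~\ref{rem-points-span-plane} secures, so that no $\tilde F_i$ or $\tilde\Gamma_L$ term contributes (cf.\ Lemma~\ref{lem-plane-class}) --- that $(\pi_M)_*$ really annihilates every $M$-exceptional generator, and that the negative value $[\tilde\Pi_L]\cdot K_i=-1$ is read correctly: not as a count of intersection points but as an obstruction to $\tilde\Pi_L$ and $K_i$ being disjoint, which is all that is needed.
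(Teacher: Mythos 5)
Your proposal is correct and follows essentially the same route as the paper: compute $[\tilde\Pi_L]=L_X^2-\sum_i Q_i$ via Lemma~\ref{lem-plane-class}, use the intersection numbers of Lemma~\ref{lem-K3-intersection-theory} to get $[\tilde\Pi_L]\cdot M_X^2=1$ (so the image is a plane) and $[\tilde\Pi_L]\cdot K_i\neq 0$ (so the plane passes through each $q_i$), and conclude since $q_1,q_2,q_3$ span $\Pi_M$. The only differences are cosmetic — you expand $[\tilde\Pi_L]$ in the $M$-basis and push forward explicitly, and you pair with $K_i$ directly in the $L$-basis rather than via the identity $L_X^2-\sum Q_i=M_X^2-\sum K_i$ used in the paper — and your extra care about strict transforms and non-contraction only makes the argument more self-contained.
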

\begin{proof}
Let $\tilde\Pi_L$ be the strict transform of ${\Pi}_L$ on $X$. By Lemma \ref{lem-plane-class}, its class in $\rH^4(X,\bZ)_{\mathrm{alg}}$ 
equals to $L_X^2 - \sum_{i=1}^3 Q_i$. Using Lemma~\ref{lem-K3-intersection-theory}, one can
compute 
$$
M_X^2(L_X^2 - \sum_{i=1}^{3} Q_i) = 1.
$$ 
This implies that $\pi_M$ maps $\tilde \Pi_L$ to a plane in $\bP^4$. Lemma \ref{lem-K3-intersection-theory} also shows that
\[
L_X^2 - \sum_{i=1}^{3} Q_i = M_X^2 - \sum_{i=1}^{3} K_i. 
\]
Intersecting this class with $K_i$, we find that the intersection number is nonzero for each $i=1,2,3$, and thus the plane $\psi_* \Pi_L$ contains the singular points $q_1,q_2,q_3$. It follows that $\psi_*\Pi_L=\Pi_M$.   
\end{proof}

\begin{lemm}\label{lemm:quadint}
The image $\pi_M(Q_i)$ is a smooth quadric surface containing singular points $q_j$ of $S_M$ for $i,j=1,2,3$.
Similarly, the image $\pi_L(K_i)$ is a smooth quadric surface containing singular points $p_j$ of $S_L$ for $i,j=1,2,3$.  
\end{lemm}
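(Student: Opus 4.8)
Since the HL-Cremona transformation is symmetric in the two copies of $\bP^4$, I would prove only the statement about $\pi_M(Q_i)$ and then deduce the assertion about $\pi_L(K_i)$ by interchanging the roles of $L$ and $M$ (the intersection data in Lemma~\ref{lem-K3-intersection-theory} and the genericity assumptions of Remark~\ref{rem-points-span-plane} are symmetric under this exchange). The overall plan is to first pin down the numerical type of the image, and then promote ``degree two'' to ``smooth quadric''. First I would compute, using the change-of-basis matrix in Lemma~\ref{lem-K3-intersection-theory} together with the intersection matrix in the $L$-basis,
\[
M_X^2\cdot Q_i = 2,\qquad K_j\cdot Q_i = 1\qquad(i,j=1,2,3).
\]
Since $M_X=\pi_M^*\cO_{\bP^4}(1)$ is nef and $(M_X|_{Q_i})^2 = 2\ne 0$, the image $T_i:=\pi_M(Q_i)$ is an irreducible reduced surface in $\bP^4$ and $\pi_M|_{Q_i}\colon Q_i\to T_i$ is generically finite; the projection formula then gives $\deg(\pi_M|_{Q_i})\cdot\deg T_i = (M_X|_{Q_i})^2 = 2$. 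Moreover $K_j\cdot Q_i=1\ne 0$ forces $K_j\cap Q_i\ne\emptyset$, hence $q_j=\pi_M(K_j)\in T_i$ for all $j$; thus $T_i$ contains $q_1,q_2,q_3$ in any case.

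Next I would rule out $\deg T_i=1$, i.e. that $T_i$ is a plane. As $S_M$ is irreducible of degree $9$, it contains no plane, so $T_i\not\subset S_M$ and $T_i\cap S_M$ is a proper closed subset of $T_i$; for general $y\in T_i$ we then have $y\notin S_M$, where $\pi_M$ is an isomorphism, so $\pi_M^{-1}(y)$ is a single reduced point. This forces $\deg(\pi_M|_{Q_i})=1$ and hence $(M_X|_{Q_i})^2=1$, contradicting the computation above. (Equivalently: such a plane would have to equal $\Pi_M$, since by Remark~\ref{rem-points-span-plane} the points $q_1,q_2,q_3$ span a plane, and one checks from Lemma~\ref{lem-plane-class} that $Q_i$ is none of the two-dimensional components $\tilde\Pi_M, K_1, K_2, K_3$ of $\pi_M^{-1}(\Pi_M)$.) Hence $\deg T_i=2$ and $\pi_M|_{Q_i}$ is birational onto $T_i$.

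Finally I would show $T_i$ is smooth. An irreducible reduced surface of degree $2$ in $\bP^4$ must be degenerate, since the minimal degree of a non-degenerate surface in $\bP^n$ is $n-1$; as it cannot span a $\bP^2$, it spans a $\bP^3$ and is therefore an irreducible quadric surface, either smooth or a quadric cone. The birational morphism $\pi_M|_{Q_i}\colon\bP^1\times\bP^1\to T_i$ contracts no curve: every curve on $\bP^1\times\bP^1$ has nonnegative self-intersection, while a curve contracted by a birational morphism would have negative self-intersection by the Hodge index theorem applied to the nef and big class $(\pi_M|_{Q_i})^*\cO(1)$. Hence $\pi_M|_{Q_i}$ is finite and birational, and since a quadric cone is normal, Zariski's main theorem would make it an isomorphism, contradicting the smoothness of $\bP^1\times\bP^1$. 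Therefore $T_i$ is a smooth quadric surface through $q_1,q_2,q_3$, and symmetrically $\pi_L(K_i)$ is a smooth quadric surface through $p_1,p_2,p_3$.

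The numerical step is routine once Lemma~\ref{lem-K3-intersection-theory} is in hand; the step I expect to be the main obstacle is the last one, namely excluding the two degenerate possibilities for a degree-two surface (a doubly covered plane and a quadric cone). Here one genuinely needs the geometry of the blow-down $\pi_M$ (via the fibre structure over $\bP^4\setminus S_M$) and the rigidity of $\bP^1\times\bP^1$ (no negative curves, hence no nontrivial birational morphism out of it).
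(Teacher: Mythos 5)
Your proof is correct and follows essentially the same route as the paper: the same two intersection-number computations from Lemma~\ref{lem-K3-intersection-theory} ($M_X^2\cdot Q_i=2$ and $Q_i\cdot K_j=1$), with the containment of the points $q_j$ deduced exactly as in the paper from the nonvanishing of $Q_i\cdot K_j$. The only difference is at the end: where the paper concludes directly that the base-point-free degree-$2$ system on $Q_i\simeq\bP^1\times\bP^1$ makes $\pi_M|_{Q_i}$ an isomorphism onto a smooth quadric, you explicitly exclude the doubly-covered plane (using that $\pi_M$ is birational off $S_M$) and the quadric cone (via Hodge index and Zariski's main theorem), which is a slightly more detailed finish of the same argument rather than a different approach.
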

\begin{proof}
By symmetry, it suffices to prove the first assertion. By Lemma \ref{lem-K3-intersection-theory}, we have $Q_i M^2 = 2$, so $M|_{Q_i}$ is a linear system of degree $2$ on $Q_i=\bP^1\times\bP^1$ that defines a morphism $(\pi_M)|_{Q_i}$. Thus, $(\pi_M)|_{Q_i}$ is an isomorphism and 
$\pi_M(Q_i)$ is a smooth quadric surface in $\bP^3\subset \bP^4$. 

Next, we prove that $\pi_M(Q_i)$ contains the points $q_j$ for $j=1,2,3$. It suffices to show that the intersection $Q_i\cap K_j$ is non-empty for any $i, j=1,2,3$. Indeed, using Lemma \ref{lem-K3-intersection-theory}, we compute the intersection numbers:
\[
Q_i K_j = Q_i \left(2L_X^2-\tilde \Gamma_L+F_j+\sum_{k=1}^3(\tilde F_k+Q_k)\right) = 1,\quad i,j=1,2,3.
\]
\end{proof}

\subsection{Constructing the boundaries}\label{sect:4bound}
We retain the notation above. Let 
$$
\psi:\bP^4\dashrightarrow\bP^4
$$
be a very general HL-Cremona transformation. We seek divisors $D_L$ and $D_M$ such that 
\begin{itemize}
    \item  $(\bP^4,D_L)$ and $(\bP^4, D_M)$ are log CY pairs admitting toric models, and
    \item $\psi$ extends to a crepant birational map of pairs 
$
(\bP^4,D_L)\dashrightarrow(\bP^4, D_M).
$
\end{itemize}
To achieve this, we construct $D_L$ and $D_M$ as follows:
\begin{enumerate}
    \item Pick the singular point $p_1$ of $S_L$. Consider the quadric surface $Q_1$ in $X$ above $p_1$. Lemma \ref{lemm:quadint} shows that $\pi_M(Q_1)$ is a smooth quadric surface in $\bP^4$. Let $H_M=\bP^3$ be the unique hyperplane in $\bP^4$ containing $\pi_M(Q_1)$, and $B_L$ the strict transform $\psi^{-1}_{*}(H_M)$. Note that $B_L$ is a quartic hypersurface in $\bP^4$ containing~ $S_L$;
    \item We proceed by symmetry: pick the singular point $q_1$ of $S_M$. Consider the quadric surface $K_1$ above $q_1$. Its image $\pi_L(K_1)$ is a quadric surface in $\bP^4$. Let $H_L=\bP^3$ be the unique hyperplane containing $\pi_L(K_1)$, and $B_M$ the strict transform $\psi_{*}(H_L)$. Similarly, $B_M$ is a quartic hypersurface containing $S_M;$
    \item Put $D_L=H_L+B_L$ and $D_M=H_M+B_M$. By construction, one has the inclusions
    \begin{equation}
    \label{eq-K3-Q1-H2}
            \pi_M(Q_1)\subset H_M,\quad\pi_L(K_1)\subset H_L,\quad S_L\subset B_L\quad \text{and}\quad S_M\subset B_M.
    \end{equation}
\end{enumerate}

By symmetry, the same construction starting from other singular points of $S_L$ and $S_M$ will produce the same results. Without loss of generality, we work with the choice of $p_1$ and $q_1$.

 By construction, $S_L$ is contained in $B_L$ but not in $H_L$, and $S_M$ is contained in $B_M$ but not in $H_M$. It follows that $\psi$ extends to a crepant birational map of the CY pairs
$$
\psi:(\bP^4,H_L+B_L)\dashrightarrow (\bP^4,H_M+B_M).
$$
In the following subsections, we construct crepant birational maps from these two pairs to the standard toric pairs.

\subsection{Computing the strata}
Here we compute the strata $B_L\cap H_L$ and $B_M\cap H_M$ from our construction above.
\begin{lemm}\label{lemm:HLBL}
We have the inclusions
$
\Pi_L \subset B_L $, and $\,\Pi_M \subset B_M.
$
\end{lemm}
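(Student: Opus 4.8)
The claim is that the plane $\Pi_L$ spanned by the three singular points $p_1,p_2,p_3$ of $S_L$ is contained in the quartic hypersurface $B_L=\psi^{-1}_*(H_M)$, and by symmetry $\Pi_M\subset B_M$. The natural approach is to pass to $X$ (via the diagram \eqref{eqn:HLcremonafact}) and compute in $\rH^4(X,\bZ)_{\mathrm{alg}}$: we want to show that the strict transform $\tilde\Pi_L$ of $\Pi_L$ lies on the strict transform $\tilde B_L$ of $B_L$. Since $\tilde B_L$ is the $\pi_M$-pullback of the hyperplane $H_M$ (minus the $\pi_M$-exceptional contribution over $S_M$), and $H_M$ contains the smooth quadric $\pi_M(Q_1)$, the key point will be a relationship between divisor classes. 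First I would note, using Lemma~\ref{lemm:piplane}, that $\psi_*\Pi_L=\Pi_M$, so $\tilde\Pi_L$ maps onto $\Pi_M$ under $\pi_M$; hence $\pi_M$ restricted to $\tilde\Pi_L$ is birational onto the plane $\Pi_M\subset\bP^4$.

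**Key steps.** (1) Recall from Lemma~\ref{lem-plane-class} that $\tilde\Pi_L=L_X^2-\sum_{i=1}^3 Q_i$, and from the proof of Lemma~\ref{lemm:piplane} that this equals $M_X^2-\sum_{i=1}^3 K_i$. (2) By construction $\pi_M(Q_1)\subset H_M$, so the hyperplane $H_M$ (as a divisor on $\bP^4$) pulls back to $X$ as $M_X$, and the strict transform $\tilde H_M=\pi_M^*(M)$ on $X$ is linearly equivalent to $M_X$ (there is no exceptional correction since $H_M$ is a hyperplane not containing $S_M$, only meeting it — one must check $S_M\not\subset H_M$, which follows because $S_M$ has degree $9$ and would otherwise force $H_M\cdot S_M$ to be all of $S_M$, contradicting that $\pi_M(Q_1)$ is a proper subvariety; alternatively invoke the very general hypothesis as in Remark~\ref{rem-points-span-plane}). (3) Now $\tilde B_L$, being $\psi_*^{-1}H_M$ regarded on $X$, has class $4L_X^2$-something; more precisely $\psi^*$ of a hyperplane is the quartic system, so on $X$ the strict transform $\tilde B_L$ satisfies $\pi_L^*(B_L)=\tilde B_L+(\text{multiplicity along }E_X)$, and $\pi_M^*(H_M)=\tilde B_L + (\text{mult. along }E_X \text{ for }\pi_M)$. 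The cleanest route: show $\tilde\Pi_L\subset\tilde B_L$ as subvarieties of $X$ by showing $\tilde\Pi_L\subset\pi_M^{-1}(H_M)$. Since $\pi_M(\tilde\Pi_L)=\Pi_M$ by Lemma~\ref{lemm:piplane}, it suffices to prove $\Pi_M\subset H_M$. (4) Finally verify $\Pi_M\subset H_M$: the hyperplane $H_M$ is defined as the unique $\bP^3$ containing the quadric surface $\pi_M(Q_1)$; by Lemma~\ref{lemm:quadint}, $\pi_M(Q_1)$ contains the three singular points $q_1,q_2,q_3$ of $S_M$, which span $\Pi_M$; hence $\Pi_M\subset H_M$, and therefore $\Pi_L=\psi^{-1}_*\Pi_M\subset\psi^{-1}_*H_M=B_L$. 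The symmetric statement $\Pi_M\subset B_M$ follows by the same argument with $L$ and $M$, $p_i$ and $q_i$ interchanged, using that $\pi_L(K_1)\supset\{p_1,p_2,p_3\}$ spans $\Pi_L\subset H_L$ and $\psi_*\Pi_L=\Pi_M$.

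**Main obstacle.** The delicate point is the bookkeeping between strict transforms and total transforms on $X$ (and on $P'$), i.e., making sure that "$\Pi_M\subset H_M$ downstairs" really does imply "$\Pi_L\subset B_L$ downstairs," rather than merely an inclusion of strict transforms that might differ by exceptional loci lying over $\Sing(S_L)$. The safe way to handle this is to argue entirely on $\bP^4$: $\psi$ restricts to an isomorphism between the open sets $\bP^4\setminus S_L$ and $\bP^4\setminus S_M$, it carries $\Pi_L\setminus(\Pi_L\cap S_L)$ onto $\Pi_M\setminus(\Pi_M\cap S_M)$ (Lemma~\ref{lemm:piplane}), and it carries $B_L\setminus S_L$ onto $H_M\setminus(H_M\cap S_M)$ by definition of $B_L$; since $\Pi_M\subset H_M$, we get $\Pi_L\setminus S_L\subset B_L\setminus S_L$, and taking Zariski closures (noting $\Pi_L\cap S_L$ is finite by Remark~\ref{rem-points-span-plane}, hence $\Pi_L\setminus S_L$ is dense in $\Pi_L$) gives $\Pi_L\subset B_L$. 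This closure argument sidesteps all the intersection-theoretic subtleties; the only genuine input is the two facts $\Pi_M\subset H_M$ (from Lemma~\ref{lemm:quadint}) and $\psi_*\Pi_L=\Pi_M$ (Lemma~\ref{lemm:piplane}).
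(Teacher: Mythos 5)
Your proposal is correct and takes essentially the same route as the paper's proof: it deduces $\Pi_M\subset H_M$ from the construction $\pi_M(Q_1)\subset H_M$ together with Lemma~\ref{lemm:quadint}, and then transports this inclusion through strict transforms using Lemma~\ref{lemm:piplane} and the definition $B_L=\psi^{-1}_*(H_M)$, exactly as in the paper. The only small slip is the claim that $\psi$ is an isomorphism between $\bP^4\setminus S_L$ and $\bP^4\setminus S_M$ (it is not, since $\psi$ contracts divisors), but your closure argument is unaffected: Lemma~\ref{lemm:piplane} guarantees that $\Pi_L$ is neither contained in $S_L$ nor in any contracted divisor, so a dense open subset of $\Pi_L$ lies in the isomorphism locus and the Zariski-closure step goes through.
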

\begin{proof}
First, recall from \eqref{eq-K3-Q1-H2} that $\pi_M(Q_1)\subset H_M$. On the other hand, it follows from Lemma \ref{lemm:quadint} that $q_i\in \pi_M(Q_1)$ for $i=1,2,3$. This implies that $\Pi_M \subset H_M$ and $\psi^{-1}_*(\Pi_M)\subset \psi^{-1}_*(H_M)$. By Lemma~\ref{lemm:piplane}, we know $\psi^{-1}_*(\Pi_M)=\Pi_L$. By construction, we have $\psi^{-1}_*(H_M)=B_L.$ This shows $\Pi_L\subset B_L$. By symmetry, $\Pi_M\subset B_M$ also holds.
\end{proof}

\begin{lemm}\label{lemm:piLKiBL}
Let $H\subset \bP^4$ be any hyperplane section such that $p_i\in H$ for all $i=1,2,3$. Let $H_X$ be the strict transform of $H$ on $X$. Then $Q_i\subset H_X$ for $i=1,2,3$. In particular, this implies that
\[
\pi_M(Q_i)\subset B_M\quad \text{and} \quad\pi_L(K_i)\subset B_L\quad  \text{for }\quad i=1,2,3.
\]
\end{lemm}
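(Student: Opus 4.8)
The plan is to first establish the geometric core of the statement --- that the strict transform under $\pi_L$ of \emph{any} hyperplane $H$ through $p_1,p_2,p_3$ contains every $Q_i$ --- and then to read off the two inclusions $\pi_M(Q_i)\subset B_M$ and $\pi_L(K_i)\subset B_L$ as purely formal consequences, using only Lemma~\ref{lemm:quadint}, the $L\leftrightarrow M$ symmetry of the Hassett--Lai construction, and the definitions of $B_L,B_M$ in Section~\ref{sect:4bound}.

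For the core statement, the key point is that $S_L$ is \emph{non-degenerate} in $\bP^4$: it is a linear projection of the K3 surface $R_L\subset\bP^7$, which is linearly normal (embedded by the complete system $|\Gamma|$) and hence spans $\bP^7$, so its image spans $\bP^4$. In particular no hyperplane contains $S_L$, so $H\not\supseteq S_L$. Since $\pi_L\colon X\to\bP^4$ is the blow-up of the irreducible surface $S_L$, its exceptional divisor $E_X$ is irreducible and dominates $S_L$, and a general point of $S_L$ is a smooth point of $S_L$ lying off $H$; therefore $\mathrm{ord}_{E_X}(\pi_L^*H)=0$, i.e.\ the strict transform and the total transform agree, $H_X=\pi_L^*H$, so $\mathrm{Supp}(H_X)=\pi_L^{-1}(H)$. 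Because $p_i\in H$ and $\pi_L(Q_i)=p_i$, we get $Q_i\subseteq\pi_L^{-1}(p_i)\subseteq\pi_L^{-1}(H)=\mathrm{Supp}(H_X)$, hence $Q_i\subset H_X$ for $i=1,2,3$. (If one prefers an explicit verification of $H_X=\pi_L^*H$, one can work analytically near $p_i$, where $S_L$ is the union of the two transverse smooth branches $\{x_1=x_2=0\}$ and $\{x_3=x_4=0\}$, so that $X$ is locally $\Bl_{(x_1,x_2)\cap(x_3,x_4)}$, the surface $Q_i$ is the fibre $\{x_1=x_3=0\}$ in every chart, and a local equation of $H$ vanishing at $p_i$ pulls back with no exceptional factor and vanishes identically along $Q_i$.)

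By the symmetry of the construction (swapping $S_L\leftrightarrow S_M$, $p_i\leftrightarrow q_i$, $\pi_L\leftrightarrow\pi_M$) the same argument shows that the strict transform under $\pi_M$ of any hyperplane through $q_1,q_2,q_3$ contains every $K_i$. Applying this to $H_M$: by construction $H_M$ is the hyperplane spanned by the smooth quadric surface $\pi_M(Q_1)$, which by Lemma~\ref{lemm:quadint} passes through $q_1,q_2,q_3$, so $H_M$ contains $q_1,q_2,q_3$ and hence $K_i$ is contained in the strict transform $(H_M)_X$ of $H_M$ under $\pi_M$. Since $\psi=\pi_M\circ\pi_L^{-1}$ by diagram~\eqref{eqn:HLcremona}, we have $\psi^{-1}_*(H_M)=(\pi_L)_*(H_M)_X=B_L$ by the definition of $B_L$; as $B_L$ is a nonzero (quartic) divisor, $(H_M)_X$ is not $\pi_L$-exceptional, so $\pi_L$ maps it onto $B_L$, and pushing the inclusion $K_i\subset (H_M)_X$ forward gives $\pi_L(K_i)\subset B_L$. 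The inclusion $\pi_M(Q_i)\subset B_M$ follows symmetrically by applying the core statement to the hyperplane $H_L$ --- which contains $p_1,p_2,p_3$ since it is spanned by $\pi_L(K_1)$ and $\pi_L(K_1)\supseteq\{p_1,p_2,p_3\}$ by Lemma~\ref{lemm:quadint} --- and then pushing $Q_i\subset (H_L)_X$ forward by $\pi_M$, using $\psi_*(H_L)=B_M$.

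The only genuinely delicate step is the identification $H_X=\pi_L^*H$ of strict and total transform; everything afterwards is bookkeeping with the two contractions $\pi_L,\pi_M$ and the definitions of the boundary quartics. I expect no real difficulty there: the point is exactly that non-degeneracy of $S_L$ forces the pull-back of $H$ to acquire no component of $E_X$, and this should be recorded explicitly if it is not otherwise available.
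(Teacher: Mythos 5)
Your proof is correct, but it takes a genuinely different and more economical route than the paper's. The paper proves the core statement by pushing $H$ through the explicit factorization of diagram \eqref{eqn:HLcremonafact}: it considers the plane $\Psi=H_P\cap E_i$ inside $E_i\simeq\bP^3$, splits into the case where $\Psi$ meets the two skew lines $E_i\cap S_L'$ transversally (where the strict transform of $\Psi$ is a del Pezzo surface of degree $7$ surjecting onto $Q_i$) and the case where $\Psi$ contains one of the lines, which it then argues is impossible. Your argument bypasses the factorization and the case analysis altogether: since $E_X$ is the unique $\pi_L$-exceptional prime divisor and it dominates $S_L\not\subset H$, the pullback $\pi_L^*H$ acquires no exceptional component, so set-theoretically $\pi_L^{-1}(H)=\mathrm{Supp}(H_X)$ (every irreducible component of the zero locus of the pulled-back linear form has codimension one, and the only candidate prime divisors mapping into $H$ are $H_X$ and $E_X$); hence $Q_i\subset\pi_L^{-1}(p_i)\subset H_X$ as soon as $p_i\in H$. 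This is a cleaner argument, it proves a slightly stronger statement (only $p_i\in H$ is needed, with no generality hypothesis on $H$), and it avoids the most delicate step of the paper's proof, namely ruling out the degenerate position of $\Psi$ relative to the skew lines. Two minor points: the irreducibility and uniqueness of the exceptional divisor $E_X$ is not automatic for the blow-up of a non-normal surface and should be quoted from the description of $\pi_L$ in Section~\ref{subsect:HLcre} ($E_X$ is the image of the exceptional divisor $E$ over the smooth irreducible surface $S_L'$, and the $E_i'$ are contracted to the surfaces $Q_i$, so no further exceptional divisors appear), rather than asserted from irreducibility of $S_L$ alone; and the parenthetical local-model verification is imprecise as stated ($Q_i$ is the whole $\bP^1\times\bP^1$ over $p_i$, not a fibre $\{x_1=x_3=0\}$ of a chart) and is best dropped, since your main argument does not use it. Your deduction of the inclusions $\pi_M(Q_i)\subset B_M$ and $\pi_L(K_i)\subset B_L$ --- applying the symmetric statement to $H_M\supset\pi_M(Q_1)\ni q_1,q_2,q_3$ and to $H_L\supset\pi_L(K_1)\ni p_1,p_2,p_3$ via Lemma~\ref{lemm:quadint}, then pushing forward by $\pi_L$, respectively $\pi_M$, and using $B_L=\psi^{-1}_*(H_M)$, $B_M=\psi_*(H_L)$ --- is exactly the intended use of the lemma and is fine.
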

\begin{proof}
Let $H_P$ be the strict transform of $H$ on $P$ as in diagram \eqref{eqn:HLcremonafact}. Fix $i\in \{1,2,3\}$. Observe that the intersection $\Psi=H_P\cap E_i$ is a plane in $E_i\simeq \bP^3$, where $E_i$ is the exceptional divisor over $p_i$. Note that $E_i \cap S'_L$ is a union of two skew lines $l_{i,1}$, $l_{i,2}$ in $E_i\simeq \bP^3$ where $S'_L$ is the strict transform of $S_L\subset \bP^4$ on $P$. Two cases are possible: either both skew lines $l_{i,1}$, $l_{i,2}$ intersect the plane $\Psi$ transversally, or one of the skew lines is contained in $\Psi$. 

Start with the first case. After the blow up $\beta_L\colon P'\to P$, the strict transform $\Psi'$ of $\Psi$ is a del Pezzo surface of degree $7$ which surjects to $Q_i$ under the map $\xi_L\colon P\to X$. Thus, the strict transform $H_X$ of $H$ on $X$ contains $Q_i$, as is claimed. 

Then we show the second case is impossible. Without loss of generality, assume $l_{i,1}\subset \Psi$. This implies that the hyperplane $H$ contains one of the two branches of the surface $S_L$ near the point $p_i$, which is a contradiction since $S_L$ is not contained in a hyperplane, and $H$ intersects $S_L$ in a curve. 
\end{proof}

\begin{prop}\label{prop:mult3}
    The point $p_1$ has multiplicity 3 in $B_L$. The point $q_1$ has multiplicity 3 in $B_M$.
\end{prop}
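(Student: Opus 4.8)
The plan is to prove only that $\mathrm{mult}_{p_1}(B_L)=3$; the statement for $q_1$ and $B_M$ then follows by the symmetry of the construction (exchanging $L\leftrightarrow M$, $p_i\leftrightarrow q_i$, $H_L\leftrightarrow H_M$, $B_L\leftrightarrow B_M$, $Q_i\leftrightarrow K_i$). Write $m_1:=\mathrm{mult}_{p_1}(B_L)$.

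First I would reduce the computation to $X$. Let $\tilde B_L$ be the strict transform of $B_L$ under $\pi_L$ and put $b_1:=\mathrm{mult}_{Q_1}(\tilde B_L)$, the multiplicity of $\tilde B_L$ along the exceptional quadric $Q_1\subset X$. The key is the relation $m_i=b_i+2$ for each $i$. I would establish it by comparing the total transform of $B_L$ along the two sides of diagram~\eqref{eqn:HLcremonafact}: on the left one blows up $p_1,p_2,p_3$ and then $S'_L$, using that $B_L$ has multiplicity $1$ along $S_L$ at its generic point (because $\mathrm{Bs}(\mathcal L_1)=S_L$ is reduced) and multiplicity $m_i$ at $p_i$; on the right one pulls back along $\pi_L$ (with coefficient $1$ on $E_X$) and then along $\xi_L$, which contracts each $E'_i$ onto $Q_i$. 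Matching coefficients of $E'_i$ yields $m_i=b_i+2$. As a sanity check: a general quartic through $S_L$ has multiplicity exactly $2$ at $p_1$ — it must contain the two transverse smooth branches $\Lambda_1,\Lambda_2$ of $S_L$ through $p_1$, which forces $\mathrm{mult}_{p_1}\ge 2$, and $\mathrm{Bs}(\mathcal L_1)=S_L$ forces equality — while its strict transform is a general member of the base-point-free system $|M_X|=\pi_M^*|\mathcal O_{\bP^4}(1)|$, hence does not contain $Q_1$, so $b_1=0$; this is consistent with $m_1=b_1+2$.

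Next I would compute $b_1=1$. Since $B_L=\psi^{-1}_*(H_M)$ and neither $B_L$ nor $H_M$ is $\psi$-exceptional, $\tilde B_L$ equals the strict transform of $H_M$ under $\pi_M$, i.e.\ $\tilde B_L=(H_M)_X$. By construction $\pi_M(Q_1)\subset H_M$, and $Q_1$ is not contained in the $\pi_M$-exceptional divisor $E_X^M$: indeed $\pi_M(Q_1)$ is a smooth quadric surface by Lemma~\ref{lemm:quadint}, which cannot lie in the irreducible degree-$9$ surface $S_M=\pi_M(E_X^M)$. Hence $Q_1\subset(H_M)_X=\tilde B_L$, so $b_1\ge 1$. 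Conversely, at a general point of $Q_1$ the morphism $\pi_M$ is a local isomorphism (that point maps into $\pi_M(Q_1)\setminus S_M$), so there $\tilde B_L$ is locally isomorphic to the smooth hyperplane $H_M$; thus $\tilde B_L$ is smooth at a general point of $Q_1$ and $b_1=1$. With the previous step this gives $m_1=3$. One can double-check the upper bound independently: $(\bP^4,H_L+B_L)$ is a log Calabi--Yau, hence log canonical, pair by Section~\ref{sect:4bound}; since $p_1\in H_L$ by Lemma~\ref{lemm:quadint} and $p_1\in S_L\subset B_L$, blowing up $p_1$ produces an exceptional divisor of log discrepancy $4-\mathrm{mult}_{p_1}(H_L)-\mathrm{mult}_{p_1}(B_L)=3-m_1\ge 0$, so $m_1\le 3$.

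I expect the main obstacle to be the relation $m_i=b_i+2$: the blow-up bookkeeping through diagram~\eqref{eqn:HLcremonafact} requires the precise pullback formula for $E_X$ under $\xi_L$ (in particular the coefficients of the $E'_i$) and a careful treatment of the non-normal center $S_L$ near its transverse double points. An alternative, more self-contained route for the lower bound $m_1\ge 3$ avoids $X$ entirely and works locally at $p_1$: choosing coordinates so that $\Lambda_1=\{x_3=x_4=0\}$ and $\Lambda_2=\{x_1=x_2=0\}$, the local equation of $B_L$ lies in $(x_1,x_2)\cap(x_3,x_4)$, so if $m_1=2$ its degree-$2$ part is a nonzero bilinear form whose zero cone is a quadric $3$-fold containing the six $2$-planes $T_{p_1}\Lambda_1$, $T_{p_1}\Lambda_2$, $T_{p_1}\Pi_L$, $T_{p_1}\pi_L(K_1)$, $T_{p_1}\pi_L(K_2)$, $T_{p_1}\pi_L(K_3)$ (using Lemmas~\ref{lemm:HLBL}, \ref{lemm:piLKiBL} and \ref{lemm:quadint}); one then argues that for a very general HL-Cremona transformation these six planes are in sufficiently general position that no such quadric can exist, forcing the degree-$2$ part to vanish. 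There the obstacle is the general-position claim, which — in the spirit of Remark~\ref{rem-points-span-plane} — would be reduced to an open condition verified on the explicit example of \cite{HL}.
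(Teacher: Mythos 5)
Your route is genuinely different from the paper's. The paper argues by elimination: setting $d=\mathrm{mult}_{p_1}B_L$, it rules out $d=1$ and $d=2$ by analyzing the surface $B_L'\cap E_1\subset\bP^3$ (it must contain the two skew lines $S_L'\cap E_1$, and in the quadric case neither a smooth quadric nor a pair of planes can have strict transform on $X$ containing $Q_1$, contradicting $\pi_M(Q_1)\subset H_M$), and rules out $d=4$ because then $B_L\cap H_L$ would be a cone yet contains the smooth quadric $\pi_L(K_1)$. You instead compute the multiplicity directly from the identity $m_1=b_1+2$ together with $b_1=\mathrm{mult}_{Q_1}\tilde B_L=1$. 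Your step $b_1=1$ is correct and complete: $Q_1\not\subset E_X^M$ (else $\pi_M(Q_1)\subset S_M$, impossible for degree reasons), so at the generic point of $Q_1$ the map $\pi_M$ is a local isomorphism onto a point of $H_M\setminus S_M$, whence $Q_1\subset\tilde B_L$ and $\tilde B_L$ is smooth there.

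The gap is exactly where you said it would be, and it is the crux: you never establish the coefficient $2$ in $m_i=b_i+2$. Matching coefficients of $E_i'$ in the two expressions for the total transform of $B_L$ on $P'$ gives $m_i=b_i+\nu\,e_i$, where $\nu=\mathrm{ord}_{E_X}(\pi_L^*B_L)$ and $e_i=\mathrm{ord}_{E_i'}(\xi_L^*E_X)$, and you need $e_i=2$; with the naive value $e_i=1$ your method would output $m_1=2$, contradicting the proposition, so this coefficient cannot be waved through. It is true, and the proof is a short local computation encoding precisely the non-normality of $S_L$ at $p_i$: since $E_X=\mathrm{div}(I_{S_L}\cdot\cO_X)$ and locally at the transverse double point $I_{S_L}=(x_1,x_2)\cap(x_3,x_4)=(x_1x_3,x_1x_4,x_2x_3,x_2x_4)$, in a chart $x_3=w$, $x_j=wa_j$ of the point blow-up one gets $I_{S_L}\cdot\cO_P=w^2(a_1,a_2)=w^2\,I_{S_L'}$, hence $\xi_L^*E_X=E+2\sum_i E_i'$ (equivalently, both local analytic branches of $E_X$ contain $Q_i$). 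With that computation inserted (and using that $\xi_L$ is the smooth blow-up of $X$ along the $Q_i$, so $\mathrm{ord}_{E_i'}$ of a pullback computes multiplicity along $Q_i$), your proof closes. Two smaller points to fix: your justification that $\nu=1$ (``$\mathrm{Bs}(\cL_1)=S_L$ is reduced'') only controls the general member of $\cL_1$, not the specific quartic $B_L$; argue instead via $\tilde B_L\sim M_X=4L_X-E_X$, or note that a reduced quartic cannot be generically double along a degree-$9$ surface. And your lc-based upper bound $m_1\le 3$ is fine only as a heuristic, since log canonicity of $(\bP^4,H_L+B_L)$ is not established in the paper independently of this proposition, so it should not carry logical weight.
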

\begin{proof}
By symmetry of the construction, it suffices to prove this claim for the point $p_1$ on $B_L$. Let $d$ be the multiplicity of $p_1$ in $B_L$. Since $B_L$ is a quartic, we have $d\leq 4$. We exclude the cases $d=1, 2$ and $4$. 

We keep the notation in diagram~\eqref{eqn:HLcremonafact}. Let $B_L'$ be the strict transform of $B_L$ on $P$. Note that since $S_L\subset B_L$, we have $S_L'\subset  B_L'$. 
Then the intersection of $B_L'$ with $E_1=\bP^3$ is a degree $d$ surface in $E_1$ containing two skew lines $S'_L\cap E_1$.  This implies $d\geq 2$. 

Assume $d=2$. Then $N_1 = B_L'\cap E_1$ is a quadric surface containing these skew lines. Hence $N_1$ is either a smooth quadric, or a union of two planes. Indeed, the cases of a quadric cone and a double plane are excluded since these surfaces in $\bP^3$ cannot contain a pair of skew lines. 

Assume that $N_1$ is a smooth quadric surface $\bP^1\times \bP^1$. Note that, in this case, $ B_L'$ is smooth near $N_1$. One checks that the image of $N_1$ under the map (induced from \ref{eqn:HLcremonafact})
$$
(\xi_L\circ\beta_L^{-1})\mid_{E_1}\colon E_1\to \bP^1\times \bP^1
$$
is $\bP^1$. This implies that the strict transform of $B_L$ in $X$, which equals to the strict transform of $H_M$ on $X$, does not contain the quadric surface $Q_1$. This contradicts to \eqref{eq-K3-Q1-H2} stating that $\pi_M(Q_1)\subset H_M$. So $N_1$ is not a smooth quadric surface.


Assume now that $N_1$ is a union of two planes containing two skew lines $S_L'\cap \bP^3$. Note that $B_L'$ is smooth at the generic points of these lines. As above, we show that  
$$
(\xi_L\circ\beta_L^{-1})\mid_{E_1}\left( N_1\right)\ne\bP^1\times\bP^1.
$$
Indeed, the intersection of the strict transform of $B_L'$ in $P'$ and $E_1'$ consists of the strict transform $N'_1$ of $N_1$ on $P'$, which is a union of two Hirzebruch surfaces $\mathbb{F}_1$, and two exceptional divisors of the map $E'_1\to E_1$. Neither of these $4$ components maps surjectively to $\bP^1\times\bP^1$ under the map $\xi_L\mid_{E_1'}: E'_1\to \bP^1\times\bP^1$. This implies that the strict transform of $B_L$ in $X$ does not contain the quadric surface $Q_1$, which is a contradiction as is explained above. It follows that $d\geq 3$.

Assume that $d=4$. Then $B_L$ is a cone over a surface of degree $4$, and $p_1$ is its vertex. By Lemma \ref{lemm:HLBL}, we have that $p_1\in H_L$.  Then $B_L\cap H_L$ is a cone over some curve.  By Lemma \ref{lemm:piLKiBL} and the construction of $H_L$, we know that $B_L\cap H_L$  contains the smooth quadric surface $\pi_L(K_1)$. 
However, a smooth quadric is not a cone, which is a contradiction. It follows that $d\ne 4.$
\end{proof}

\begin{rema}\label{rema:smoothcubicgeneral}
   Proposition~\ref{prop:mult3} implies that the exceptional divisor over $p_1$ in $\Bl_{p_1}B_L$ is a cubic surface. For a very general HL-Cremona transformation, we may assume the cubic surface is smooth. Indeed, one can check that this is an open condition and holds for the specific example given in \cite{HL}.
\end{rema}

\begin{prop}\label{prop:HLBLint}
For a very general HL-Cremona transformation, the boundary divisors constructed in Section~\ref{sect:4bound} satisfy
\begin{align*}
    B_L \cap H_L=\ & \pi_L(K_1)\ \cup\ \Pi_L\ \cup\ \Lambda_L \\ 
    =\ & \bP^1\times\bP^1\ \cup\ \bP^2\ \cup\ \bP^2,
\end{align*}
where $K_1$ is the quadric surface  over the point $q_1$; $\Pi_L$ is the plane spanned by $p_1,p_2,p_3$; and $\Lambda_L$ is a plane different from $\Pi_L$. 
By symmetry, we also have
\begin{align*}
    B_M \cap H_M =\ & \pi_M(Q_1)\ \cup\ \Pi_M\ \cup\ \Lambda_M \\ 
    =\ & \bP^1\times\bP^1\ \cup\ \bP^2\ \cup\ \bP^2
\end{align*}
where $Q_1$ is the quadric surface over the point $p_1$; $\Pi_M$ is the plane spanned by $q_1,q_2,q_3$; and $\Lambda_M$ is a plane different from $\Pi_L$.
\end{prop}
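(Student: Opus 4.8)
\emph{Proof plan.} By the $L\leftrightarrow M$ symmetry of the Hassett--Lai construction (Theorem~\ref{theo:HLmain}) it suffices to treat $B_L\cap H_L$. The plan is to reduce everything to a degree count on $H_L\cong\bP^3$ together with a single multiplicity-one assertion.

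First I would collect the expected components. By construction (Section~\ref{sect:4bound}) $H_L$ is the unique hyperplane containing the smooth quadric surface $\pi_L(K_1)$, so $\pi_L(K_1)\subset H_L$, while $\pi_L(K_1)\subset B_L$ by Lemma~\ref{lemm:piLKiBL}. Since $\pi_L(K_1)$ passes through $p_1,p_2,p_3$ (Lemma~\ref{lemm:quadint}), the plane $\Pi_L=\langle p_1,p_2,p_3\rangle$ lies in $H_L$, and $\Pi_L\subset B_L$ by Lemma~\ref{lemm:HLBL}; hence $\pi_L(K_1)\cup\Pi_L\subseteq B_L\cap H_L$. As $B_L$ is an irreducible quartic (the proper transform of the irreducible hyperplane $H_M$) with $B_L\neq H_L$, the intersection $B_L\cap H_L=B_L|_{H_L}$ is an effective divisor of degree $4$ on $H_L\cong\bP^3$, pure of dimension $2$. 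Write $B_L|_{H_L}=a\,\pi_L(K_1)+b\,\Pi_L+R$ with $a,b\ge 1$, $R$ effective and $2a+b+\deg R=4$. Then $a=1$ automatically (if $a\ge2$ then $2a\ge4$ forces $b=0$, contradicting $b\ge1$), so $b+\deg R=2$ and $b\in\{1,2\}$. Thus the whole statement reduces to excluding $b=2$: once $b=1$ we get $\deg R=1$, so $R=\Lambda_L$ is a reduced plane, distinct from $\Pi_L$ (as $b=1$) and from $\pi_L(K_1)$ (by degree), giving the asserted $\bP^1\times\bP^1\cup\bP^2\cup\bP^2$.

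To exclude $b=2$ I would show that $B_L$ is smooth and transverse to $H_L$ at a general point $x\in\Pi_L$; this forces $\Pi_L$ to occur with multiplicity one. Smoothness is free: a general point of $\Pi_L$ lies in the open locus where $\psi$ restricts to an isomorphism, since it is not on $S_L$ (by Remark~\ref{rem-points-span-plane} the set $\Pi_L\cap S_L$ is finite) and not on the divisor contracted by $\psi$ --- indeed the strict transform $\psi_*\Pi_L$ equals the surface $\Pi_M$ by Lemma~\ref{lemm:piplane}, so $\Pi_L$ is not contracted. As $B_L$ is the proper transform of the \emph{smooth} hyperplane $H_M$, it is therefore smooth at $x$. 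Transversality, i.e.\ $T_xB_L\neq H_L$, is the genuinely delicate point, and I expect to handle it by an openness argument: the locus in the (irreducible) parameter space of HL-Cremona transformations --- a degree-$12$ K3 surface together with a triple of points --- where transversality along $\Pi_L$ fails is closed, hence a proper subset as soon as transversality holds for a single member; were transversality to fail, one would have $B_L|_{H_L}=\pi_L(K_1)+2\Pi_L$, i.e.\ the Gauss map of $B_L$ contracting the plane $\Pi_L$. One then checks transversality in the explicit example of \cite[Section~2]{HL}, exactly as the other genericity hypotheses are verified in Remark~\ref{rem-points-span-plane} and Remark~\ref{rema:smoothcubicgeneral}. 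With $b=1$ in hand, the description of $B_L\cap H_L$ follows, and the case $B_M\cap H_M$ is obtained by exchanging $L$ and $M$.

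The main obstacle is exactly this multiplicity-one statement for $\Pi_L$: the local-isomorphism trick yields smoothness of $B_L$ along $\Pi_L$ but not its transversality with $H_L$, and I do not see a purely formal way around it --- one either checks the relevant open condition against the Hassett--Lai example, or runs an intersection-theoretic computation on $X$ (via Lemma~\ref{lem-K3-intersection-theory} and Lemma~\ref{lem-plane-class}) to determine the class of the residual plane $\Lambda_L$ and conclude. The containments and the degree bookkeeping are routine given the lemmas of Section~\ref{subsect:HLcre}.
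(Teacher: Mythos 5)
Your proposal is correct and follows essentially the same route as the paper: the same containments via Lemmas~\ref{lemm:quadint}, \ref{lemm:HLBL}, \ref{lemm:piLKiBL}, the same degree-$4$ bookkeeping on $H_L\cong\bP^3$, and the same resolution of the one delicate point (multiplicity one of $\Pi_L$, i.e.\ reducedness of $B_L\cap H_L$) by observing it is an open condition on the parameter space and verifying it on the explicit example of \cite[Section 2]{HL}. Your extra smoothness/transversality framing and the check that $a=1$ are harmless refinements of the paper's argument, not a different method.
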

\begin{proof}
It suffices to show the first assertion. First, recall that by construction,  we have $
H_L\supset\pi_{L}(K_1).
$ Lemma~\ref{lemm:quadint} implies that $H_L\supset \Pi_L$. Lemma~\ref{lemm:HLBL} and \ref{lemm:piLKiBL} show that $B_L\supset \Pi_L$ and $B_L\supset \pi_L(K_1)$ respectively. 
Therefore $B_L\cap H_L$ contains $\pi_L(K_1)\cup\Pi_L$. Since $B_L\cap H_L$ is a degree four surface, there is a residual component $\Lambda_L$ isomorphic to $\bP^2$ in the intersection. 

It remains to check that $\Lambda_L$ is different from $\Pi_L$, or equivalently, $B_L\cap H_L$ is reduced. 
Recall from Section~\ref{sect:4bound} that once we make a choice of points $p_1\in S_L$ and $q_1\in S_M$, the boundary $B_L+H_L$ (and thus the intersection $B_L\cap H_L$) is canonically defined. 
It follows that the quartic surface $B_L\cap H_L\subset H_L$ being reduced is an open condition, and one can check that it holds for the specific example given in \cite{HL}. 
Thus, for a very general HL-Cremona transformation,  $B_L\cap H_L$ is reduced, and the claim follows. 
\end{proof}
We will use the following fact in the next subsection.
\begin{lemm}\label{lemm:p1inplane}
        The point $p_1$ (resp., $q_1$) lies on the plane $\Lambda_L$ (resp., $\Lambda_M$) appearing in Proposition~\ref{prop:HLBLint}.
\end{lemm}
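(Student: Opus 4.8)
The plan is to pin down the point $p_1$ by intersecting the three components of $B_L \cap H_L$ found in Proposition~\ref{prop:HLBLint} and using the multiplicity computation of Proposition~\ref{prop:mult3}. By symmetry it suffices to treat $p_1$ and $\Lambda_L$. Recall that $\Pi_L$ is the plane spanned by $p_1,p_2,p_3$, so certainly $p_1 \in \Pi_L$; I must show $p_1 \in \Lambda_L$, where $\Lambda_L$ is the residual plane in the decomposition $B_L \cap H_L = \pi_L(K_1) \cup \Pi_L \cup \Lambda_L$ inside $H_L \simeq \bP^3$.

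First I would record, via Proposition~\ref{prop:mult3}, that $p_1$ is a point of multiplicity $3$ on the quartic hypersurface $B_L \subset \bP^4$. Since $p_1 \in H_L$ by Lemma~\ref{lemm:HLBL} (as $p_1 \in \Pi_L \subset B_L$ and, by Lemma~\ref{lemm:quadint}, $\Pi_L \subset H_L$), the point $p_1$ lies on the quartic surface $B_L \cap H_L$ inside $H_L\simeq\bP^3$, and its multiplicity there is at least $3$: indeed $\mathrm{mult}_{p_1}(B_L \cap H_L) \geq \mathrm{mult}_{p_1}(B_L) - \text{(something)}$ is the wrong direction, so instead I would argue directly that restricting a quartic with a triple point at $p_1$ to a hyperplane through $p_1$ yields a quartic surface with a point of multiplicity $\geq 3$ at $p_1$ (a local equation argument: the lowest-degree part of the equation of $B_L$ at $p_1$ has degree $3$, and restricting to the hyperplane $H_L$ through $p_1$ keeps the order of vanishing $\geq 3$, with equality generically). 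Now the quartic surface $B_L \cap H_L$ decomposes as $\pi_L(K_1) \cup \Pi_L \cup \Lambda_L$: a smooth quadric plus two planes. A point of the union of these three surfaces has multiplicity equal to the number of components through it (counted with their local multiplicities, all of which are $1$ here since the quadric is smooth and the planes are reduced). Hence a point of multiplicity $3$ on $B_L \cap H_L$ must lie on all three components. In particular $p_1 \in \Lambda_L$.

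The one point that needs care — and which I expect to be the main obstacle — is the clean statement that the multiplicity of $p_1$ on the reduced surface $\pi_L(K_1) \cup \Pi_L \cup \Lambda_L$ equals the number of components passing through $p_1$. This is immediate if the three components have no common curve and are in general enough position locally, but one should check there is no degenerate configuration (e.g. $\Lambda_L$ tangent to the quadric along a curve through $p_1$) inflating the multiplicity artificially. Here the very general hypothesis saves the day: as in the proof of Proposition~\ref{prop:HLBLint}, one checks this is an open condition on the HL-Cremona transformation and verifies it on the explicit example of \cite{HL}; alternatively, one uses that $p_1 \in \Pi_L$ already accounts for multiplicity $1$, that $p_1 \in \pi_L(K_1)$ by Lemma~\ref{lemm:quadint} (the quadric $\pi_L(K_1)$ passes through $p_1,p_2,p_3$) accounting for another unit of multiplicity, and hence that the residual multiplicity $3 - 1 - 1 = 1$ must be carried by $\Lambda_L$, forcing $p_1 \in \Lambda_L$. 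This last bookkeeping argument avoids any transversality subtlety and is the version I would ultimately write down.
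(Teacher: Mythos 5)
Your argument is correct, but it takes a different route from the paper's. You prove the statement directly by multiplicity bookkeeping: since $\mathrm{mult}_{p_1}B_L=3$ (Proposition~\ref{prop:mult3}) and $H_L$ is a hyperplane through $p_1$ not contained in $B_L$, the restricted quartic divisor $B_L|_{H_L}$ has multiplicity at least $3$ at $p_1$; by Proposition~\ref{prop:HLBLint} this divisor is the reduced sum $\pi_L(K_1)+\Pi_L+\Lambda_L$ of three smooth surfaces, so the multiplicity at $p_1$ is exactly the number of components through $p_1$ (additivity of the order of vanishing of a product, which, as you note at the end, is insensitive to any tangency worries); since $p_1\in\Pi_L$ by definition and $p_1\in\pi_L(K_1)$ by Lemma~\ref{lemm:quadint}, the third unit of multiplicity forces $p_1\in\Lambda_L$. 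The paper instead argues by contradiction in explicit coordinates: writing $H_L+B_L=\{x_5(x_1h_3+h_4)=0\}$ with $p_1=[1:0:0:0:0]$, if $\Lambda_L$ missed $p_1$ its equation $x_1+l=0$ would divide the restricted quartic, forcing $h_4|_{x_5=0}=l\cdot h_3|_{x_5=0}$ and hence $\pi_L(K_1)\cup\Pi_L=\{h_3|_{x_5=0}=0\}$, a cone with vertex $p_1$ --- contradicting that $\pi_L(K_1)$ is a smooth quadric. Both proofs consume the same inputs (Proposition~\ref{prop:mult3}, Proposition~\ref{prop:HLBLint} including the reducedness $\Lambda_L\neq\Pi_L$, and Lemma~\ref{lemm:quadint}, which you use through $p_1\in\pi_L(K_1)$ and the paper uses through ``a smooth quadric is not a cone''); yours is coordinate-free and slightly more conceptual, and incidentally shows the multiplicity of $B_L\cap H_L$ at $p_1$ is exactly $3$, while the paper's is a short explicit computation of the same flavor as its construction of the maps $\varphi$ and $\psi$. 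Only a cosmetic remark: your first attempt at bounding $\mathrm{mult}_{p_1}(B_L\cap H_L)$ starts in the wrong direction before you correct it, and the containment $\Pi_L\subset H_L$ is more accurately credited to the definition of $H_L$ together with Lemma~\ref{lemm:quadint} (as in the proof of Proposition~\ref{prop:HLBLint}) rather than to Lemma~\ref{lemm:HLBL}.
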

\begin{proof}
    It suffices to show $p_1\in\Lambda_L$. Recall from Proposition~\ref{prop:mult3} that $p_1$ has multiplicity 3 on $B_L.$ Up to a change of variables, we may assume that $p_1=[1:0:0:0:0]$ and $H_L+B_L$ is given by the equation
    $$
  \{x_5(x_1h_3+h_4)=0\}\subset\bP^4_{x_1,\ldots,x_5}
   $$
   where $h_k$ is a $k$-form over $\Bbbk$ in variables $x_2,x_3,x_4,x_5$ for $k=3,4$. Now assume $p_1$ is not in $\Lambda_L$, i.e., $\Lambda_L$ is given by the equation
   $$
     \{x_1+l(x_2,x_3,x_4,x_5)=0\}\subset\bP^4_{x_1,\ldots,x_5}
   $$
   for some linear form $l$. The inclusion $\Lambda_L\subset B_L\cap H_L$ implies that 
   $$
   h_4(x_2,x_3,x_4,0)=l(x_2,x_3,x_4,0)\cdot h_3(x_2,x_3,x_4,0).
   $$
  It then follows from Proposition~\ref{prop:HLBLint} that 
   $$
   \pi_L(K_1)\cup\Pi_L=\{h_3(x_2,x_3,x_4,0)=0\}\subset\bP^3_{x_2,x_3,x_4,x_5},
   $$
   which is a contradiction since $\pi_L(K_1)$ is a smooth quadric surface and not a cone. Therefore we conclude $p_1\in\Lambda_L.$
\end{proof}
   \subsection{Constructing the maps $\varphi$ and $\varphi'$} The constructions of $\varphi$ and $\varphi'$ are in symmetry. We construct $\varphi$ in detail. It is a higher-dimensional analogue of the map $(\mathrm x)$ in \cite[Section 5.1]{Du24}. Similar as above, since the point $p_1$ has multiplicity $3$ on $B_L$, we may assume $p_1=[1:0:0:0:0]$ and the equation of $D_L$ is given by 
   $$
   D_L=H_L+B_L=\{h_1(x_1h_3+h_4)=0\}\subset\bP^4_{x_1,\ldots,x_5}
   $$
   where $h_k$ is a $k$-form in variables $x_2,x_3,x_4,x_5$ not divisible by $x_2$, for $k=1,3,4$.  Now consider the map 
   \begin{align*}
          \varphi\colon (\bP^4, D_L)&\dashrightarrow(\bP^4,D'_L),\\
           (x_1:x_2:x_3:x_4:x_5)&\mapsto(x_1+\frac{h_4}{h_3}:x_2:x_3:x_4:x_5),
   \end{align*}
   where the new boundary divisor $D_L'$ has components given by equations
   \begin{align}\label{eqn:xii}
        D_L'=\Xi_L+\Xi_L'+Z_L,\quad \Xi_L=\{x_1=0\},\quad \Xi_L'=\{h_1=0\},\quad Z_L=\{h_3=0\}.
   \end{align}
   By Remark~\ref{rema:smoothcubicgeneral}, $Z_L$ is a cone over a smooth cubic surface.  Let 
   $$
\omega_{D_L}=\frac{\mathrm{d}x_1\wedge\mathrm{d}x_3\wedge\mathrm{d}x_4
  \wedge\mathrm{d}x_5}{h_1(x_1h_3+h_4)}\quad\text{and}\quad \omega_{D_L'}=\frac{\mathrm{d}x_1\wedge\mathrm{d}x_3\wedge\mathrm{d}x_4
  \wedge\mathrm{d}x_5}{x_1h_1h_3}
   $$
  be two logarithmic volume forms in the affine chart $x_2=1$ of $\bP^4_{x_1,\ldots,x_5}$ such that $\mathrm{div}(\omega_{D_L})=-D_L$ and $\mathrm{div}(\omega_{D_L'})=-D'_L$.
   One can check that $\varphi$ preserves these two forms:
   $$
   \varphi^*(\omega_{D_L'})=\frac{\mathrm{d}(x_1+h_4h_3^{-1})\wedge \dd x_3\wedge\dd x_4\wedge\dd x_5}{h_1h_3(x_1+h_4h_3^{-1})}=\frac{\dd x_1\wedge \dd x_3\wedge\dd x_4\wedge\dd x_5}{h_1(x_1h_3+h_4)}=\omega_{D_L}.
$$
It follows that $\varphi$ is a crepant birational map; its 
inverse is given by
$$
\varphi^{-1}: (x_1:x_2:x_3:x_4:x_5)\mapsto(x_1-\frac{h_4}{h_3}:x_2:x_3:x_4:x_5),
$$
and the exceptional divisors of $\varphi$ and $\varphi^{-1}$ satisfy
$$
\mathrm{Exc}(\varphi)=\mathrm{Exc}(\varphi^{-1}).
$$
It follows that 
\begin{align}\label{eqn:trivcvarphi}
    \mathbf c(\varphi^{-1})=0.
\end{align}

The following lemma in crucial in the next subsection.

\begin{lemm}\label{lemm:threelineint}
    The intersection $\Xi_L\ \cap\ \Xi_L'\ \cap\ Z_L$ is a union of three lines forming a triangle on a plane.
\end{lemm}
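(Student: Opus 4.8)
The plan is to realize the scheme $\Xi_L\cap\Xi_L'\cap Z_L$ as a plane section of the smooth cubic surface sitting over $p_1$, and to identify that section with the projectivized tangent cone at $p_1$ of the surface $B_L\cap H_L$, whose components are already known. In the coordinates fixed just before the lemma, $\Xi_L=\{x_1=0\}$, $\Xi_L'=H_L=\{h_1=0\}$ and $Z_L=\{h_3=0\}$, with $h_k\in\Bbbk[x_2,x_3,x_4,x_5]$ homogeneous of degree $k$ and $p_1=[1{:}0{:}0{:}0{:}0]\notin\{x_1=0\}$. Since $h_1$ is linear and $\{x_1=0\}\cong\bP^3_{x_2,\ldots,x_5}$, the plane $\Xi_L\cap\Xi_L'=\{h_1=0\}\subset\bP^3_{x_2,\ldots,x_5}$ is a $\bP^2$, and $\Xi_L\cap\Xi_L'\cap Z_L=\{h_1=h_3=0\}$ is the cubic curve cut on it by the cubic surface $\bar Z_L:=Z_L\cap\Xi_L=\{h_3=0\}\subset\bP^3_{x_2,\ldots,x_5}$; this $\bar Z_L$ is exactly the exceptional divisor of $\Bl_{p_1}B_L$ over $p_1$, and it is smooth (hence irreducible, so $h_1\nmid h_3$) by Remark~\ref{rema:smoothcubicgeneral}.

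First I would show this cubic curve is a union of three lines. Because $p_1\notin\{x_1=0\}$, the hyperplane $\{x_1=0\}$ is canonically $\bP(T_{p_1}\bP^4)$; under this identification $\Xi_L\cap\Xi_L'$ becomes $\bP(T_{p_1}H_L)$ and $\bar Z_L$ becomes the projectivized tangent cone $\bP(TC_{p_1}B_L)$ (in the chart $x_1=1$ the equation of $B_L$ is $h_3+h_4=0$, with leading term $h_3$). As $H_L=\{h_1=0\}$ is a hyperplane through $p_1$, taking the tangent cone at $p_1$ commutes with restriction to the linear subspace $H_L$, so $\Xi_L\cap\Xi_L'\cap Z_L=\bP\!\left(TC_{p_1}(B_L\cap H_L)\right)$. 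By Proposition~\ref{prop:HLBLint}, $B_L\cap H_L=\pi_L(K_1)\cup\Pi_L\cup\Lambda_L$, and each component passes through $p_1$: $p_1\in\Pi_L$ by definition, $p_1\in\Lambda_L$ by Lemma~\ref{lemm:p1inplane}, and $p_1\in\pi_L(K_1)$ by Lemma~\ref{lemm:quadint}; moreover each is smooth at $p_1$, the quadric $\pi_L(K_1)$ being smooth by Lemma~\ref{lemm:quadint} and $\Pi_L,\Lambda_L$ being planes. Hence $TC_{p_1}(B_L\cap H_L)=T_{p_1}\pi_L(K_1)\cup\Pi_L\cup\Lambda_L$ is a union of three $2$-planes through $p_1$, which projectivizes to three lines $\ell_1=\bP(\Pi_L)$, $\ell_2=\bP(\Lambda_L)$, $\ell_3=\bP(T_{p_1}\pi_L(K_1))$ inside the $\bP^2=\Xi_L\cap\Xi_L'$.

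It remains to check that $\ell_1,\ell_2,\ell_3$ form a triangle, i.e. are pairwise distinct and not concurrent. Distinctness of $\ell_1$ and $\ell_2$ is immediate since $\Pi_L\neq\Lambda_L$ (Proposition~\ref{prop:HLBLint}) are distinct planes through $p_1$. For the rest I would pass to the smooth cubic surface $\bar Z_L$, which contains all three $\ell_i$ (they lie in $Z_L\cap\Xi_L=\bar Z_L$): the previous step identifies its plane section by $\Xi_L\cap\Xi_L'$ with the cycle $\ell_1+\ell_2+\ell_3$, and on a smooth cubic surface the relation $H\sim\ell_1+\ell_2+\ell_3$ between the hyperplane class and three lines already forces all $\ell_i$ distinct and $\ell_i\cdot\ell_j=1$ for $i\neq j$ (using $H^2=3$, $\ell^2=-1$, and $\ell\cdot\ell'\in\{0,1\}$ for lines on a cubic surface). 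Thus $\Xi_L\cap\Xi_L'$ is a tritangent plane of $\bar Z_L$, the three lines are distinct and meet pairwise, and they fail to form a triangle only if they are concurrent, i.e. only if $\bar Z_L$ has an Eckardt point on this plane. For a very general HL--Cremona transformation this does not happen: the absence of an Eckardt point is an open condition on the family, which one checks on the explicit example of \cite[Section 2]{HL}, just as smoothness of $\bar Z_L$ is checked in Remark~\ref{rema:smoothcubicgeneral}.

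The main obstacle is exactly this last point. Distinctness of the three lines is forced by intersection theory on the cubic surface and uses no genericity, but non-concurrency (the absence of an Eckardt point) genuinely relies on the ``very general'' hypothesis, and I would handle it in the same spirit as Remark~\ref{rema:smoothcubicgeneral}: phrase it as an open condition and verify it on the model of \cite{HL}. A smaller care-point is the tangent-cone bookkeeping of the second paragraph --- that $\Xi_L\cap\Xi_L'\cap Z_L$ is exactly, and with reduced structure, the projectivized tangent cone of $B_L\cap H_L$ at $p_1$ --- which works precisely because $H_L$ is a hyperplane through $p_1$, so restriction to $H_L$ is harmless at the level of tangent cones.
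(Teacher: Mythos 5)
Your first half is, modulo language, the paper's own argument: the paper identifies $\Xi_L\cap\Xi_L'\cap Z_L$ with $E_1\cap H_L'\cap B_L'$ inside the exceptional $\bP^3$ over $p_1$, notes that each of the three components $\Pi_L$, $\Lambda_L$, $\pi_L(K_1)$ of $B_L\cap H_L$ (Proposition~\ref{prop:HLBLint}) passes through $p_1$ and is smooth there (Lemma~\ref{lemm:quadint}, Lemma~\ref{lemm:p1inplane}), so that each strict transform meets $E_1$ in a line of the plane $E_1\cap H_L'$ --- which is exactly your projectivized tangent-cone picture. Your distinctness argument via $H\sim\ell_1+\ell_2+\ell_3$ on the smooth cubic $\bar Z_L$ is a workable (and slightly more explicit) variant of what the paper leaves implicit, though note that a priori you only know the plane section is an effective divisor of class $H$ supported on the $\ell_i$, so you should run the intersection computation allowing multiplicities.

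The genuine gap is where you yourself locate the main obstacle: non-concurrency. You reduce it to the absence of an Eckardt point on this tritangent plane and then invoke very-generality, asserting that this is an open condition to be verified on the explicit example of \cite{HL}; but that verification is never carried out, and it is not among the genericity statements the paper actually fixes (Remark~\ref{rem-points-span-plane}, Remark~\ref{rema:smoothcubicgeneral}). So the decisive point of the lemma rests on an unproved claim. The paper needs no extra genericity here: if the three lines were concurrent, then $\Pi_L$, $\Lambda_L$ and $\pi_L(K_1)$ would meet along a line; since by Lemma~\ref{lemm:quadint} the quadric $\pi_L(K_1)$ contains $p_1,p_2,p_3$ and $\Pi_L$ is their span, the intersection $\Pi_L\cap\pi_L(K_1)$ contains all three points, and its containing a common line would force $p_1,p_2,p_3$ to be collinear, contradicting the already-established assumption (Remark~\ref{rem-points-span-plane}) that they span a plane. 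To close your proof you should either supply this unconditional argument (the missing idea is precisely to play the quadric through $p_1,p_2,p_3$ against the span condition) or actually perform the Eckardt-point check on the \cite{HL} model; as written, the appeal to ``very general'' does not discharge it.
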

\begin{proof}
We keep notation from diagram \eqref{eqn:HLcremonafact} and Proposition~\ref{prop:mult3}. Let $B_L'$ be the strict transform of $B_L$ in $P$ and $H_L'$ be that of $H_L$. From equation \eqref{eqn:xii}, one sees that
    $$
    \Xi_L\cap\Xi_L'\cap Z_L\simeq E_1\cap H_L'\cap B_L'.
    $$
    In particular,  $Z_L\ \cap\ \Xi_L$ is the smooth cubic surface isomorphic to $E_1\cap B_L'$ in $P$, and $\Xi_L\cap\Xi_L'$ is isomorphic to the plane $E_1\cap H_L'$. 
    
    By Proposition~\ref{prop:HLBLint}, we know that $H_L\cap B_L$ has three components $\Pi_L\cup\Gamma_L\cup \pi_L(K_1)$, which are two planes and a smooth quadric surface. Lemma~\ref{lemm:p1inplane} implies that each component is smooth at $p_1$, and thus each of their strict transforms in $P$ intersects with $E_1$ along a line. It follows that $E_1\cap H_L'\cap B_L'$ consists of three lines in a plane.
    
Then we show that the three lines do not meet at one point. Assume they do. This implies that $\Pi_L, \Gamma_L$ and $\pi_L(K_1)$ meet along a line. By Lemma~\ref{lemm:quadint}, $\Pi_L\cap\pi_L(K_1)$ contains three points $p_1, p_2$ and $p_3$, which contradicts the generality assumption in Remark \ref{rem-points-span-plane} that $p_1, p_2$ and $p_3$ span a plane. Therefore, we conclude that $E_1\cap H_L'\cap B_L'$ consists of three lines forming a triangle, and thus the same holds for $\Xi_L\cap\Xi_L'\cap Z_L$.
\end{proof}

    

The construction of $\varphi'$ is identical to the process described above, starting from the singular point $q_1\in S_M$. By symmetry, we obtain a similar boundary $D_M'$ consisting of two hyperplanes $\Xi_M$ and $\Xi_M'$ together with a cone $Z_M$ over a smooth cubic surface. Lemma~\ref{lemm:threelineint} also holds for $\Xi_M\cap \Xi_M'\cap Z_M.$ Similarly, we have 
\begin{align}\label{eqn:trivcvarphiprime}
   \mathbf c(\varphi')=0.
\end{align}

   \subsection{Constructing the maps $\eta$ and $\eta'$} As the final step, we construct crepant birational maps $\eta$ and $\eta'$ from $(\bP^4,D_L')$ and $(\bP^4, D_M')$ to the standard toric pair $(\bP^4,\Delta_4)$ with the standard toric boundary 
   $$
   \Delta_4=\{x_1x_2x_3x_4x_5=0\}\subset\bP^4_{x_1,x_2,x_3,x_4,x_5}.
   $$
The constructions of $\eta$ and $\eta'$ are again symmetric and we focus on $\eta$. Recall from \eqref{eqn:xii} that, up to isomorphism, $D_L'$ is given by the equation 
$$
D_L'=\{x_1x_2h_3(x_2,x_3,x_4,x_5)=0\},\quad \deg(h_3)=3.
$$
Without loss of generality, we may assume $h_3$ is not divisible by $x_5$. Consider the map 
\begin{align*}
    \eta_1\colon(\bP^4,D_L')&\dashrightarrow(\bP^1\times\bP^3,\Delta'),\\
    (x_1:x_2:x_3:x_4:x_5)&\mapsto (x_1:x_5)\times(x_2:x_3:x_4:x_5),
\end{align*}
where the boundary $\Delta'$ is given by 
$$
\Delta'=\left(\{0\}\times\bP^3\right)+\left(\{\infty\}\times\bP^3\right)+\left(\bP^1\times V\right),\quad V=\{x_2h_3=0\}\subset\bP^3_{x_2,x_3,x_4,x_5}.
$$
One can check that $\eta_1$ preserves the corresponding volume forms (in the affine chart $x_5=1$) $\omega_{D'_L}$ and $\omega_{\Delta'}$ where $\mathrm{div}(\omega_{\Delta'})=-\Delta'$
$$
   \eta_1^*(\omega_{\Delta'})=\frac{\mathrm{d}x_1\wedge \dd x_2\wedge\dd x_3\wedge\dd x_4}{x_1x_2h_3}=\omega_{D'_L},
$$
and thus $\eta_1$ is a crepant birational map.
It is not hard to see that the exceptional divisors of $\eta_1$ and $\eta_1^{-1}$ are rational, and thus we have
\begin{align}\label{eqn:ceta1}
    \mathbf{c}(\eta_1)=r\cdot(\bP^3,0),\quad \text{for some integer $r\geq 0.$}
\end{align}
Note that $V\subset\bP^3$ is the union of a plane and a smooth cubic surface, where their intersection is isomorphic to 
$$
   \Xi_L\cap\Xi_L'\cap Z_L\simeq E_1\cap H_L'\cap B_L'.
$$ 
By Lemma~\ref{lemm:threelineint}, the intersection is a configuration of three lines forming a triangle on a plane. It follows from Lemma~\ref{lemm:corregcubic} that $\mathrm{coreg}(\bP^3,V)=0$. By \cite[Theorem 1.2]{Du24}, there exists a crepant birational map:
$$
\eta_2:(\bP^3,V)\dashrightarrow\left(\bP^1\times\bP^2_{y_1,y_2,y_3},(\{0\}\times\bP^2)+(\bP^1\times V')+(\{\infty\}\times\bP^2)\right), 
$$
where $V'=\{y_1y_2y_3=0\}\subset\bP^2_{y_1,y_2,y_3}$ is the union of three coordinate lines. This yields a birational map 
$$
\mathrm{id}\times \eta_2:\bP^1\times\bP^3\dashrightarrow \bP^1\times\bP^1\times\bP^2,
$$
and a crepant birational map
$$
(\mathrm{id}\times \eta_2)\circ(\eta_1)\colon (\bP^4, D_L')\dashrightarrow(\bP^1\times\bP^1\times\bP^2_{y_1,y_2,y_3},\Delta'')
$$
with the toric boundary 
\begin{multline*}
\Delta''=\left(\{0\}\times\bP^1\times\bP^2\right)+\left(\{\infty\}\times\bP^1\times\bP^2\right)+\left(\bP^1\times\{0\}\times\bP^2\right)+\\+\left(\bP^1\times\{\infty\}\times\bP^2\right)+\left(\bP^1\times\bP^1\times \{y_1y_2y_3=0\}\right). 
\end{multline*}
Notice that the exceptional divisors of the map $\mathrm{id}\times \eta_2$ and its inverse are all birational to $C'\times\bP^2$ for some curves $C',$ i.e.,
\begin{align}\label{eqn:cideta2}
    \mathbf{c}(\mathrm{id}\times\eta_2)=\sum_{C'}( C'\times\bP^2,0) \quad\text{ over finitely many curves $C'$.}
\end{align}
Now, consider the crepant birational map between toric pairs
$$
\eta_4\colon (\bP^1_{u_1,u_2}\times\bP^1_{t_1,t_2}\times\bP^2_{y_1,y_2,y_3},\Delta'')\dashrightarrow(\bP^4_{x_1,\ldots,x_5},\Delta_4),
$$
$$
(u_1:u_2)\times (t_1:t_2)\times(y_1:y_2:y_3)\mapsto (1:\frac{u_1}{u_2}:\frac{t_1}{t_2}:\frac{y_1}{y_3}:\frac{y_2}{y_3})
$$
where
$$
\quad\Delta_4=\{x_1x_2x_3x_4x_5=0\}\subset\bP^4_{x_1,\ldots,x_5}.
$$
Again, one sees that
\begin{align}\label{eqn:ceta4}
    \mathbf{c}(\eta_4)=r\cdot(\bP^3,0),\quad \text{for some integer $r\geq 0.$}
\end{align}
Lastly, we obtain a crepant birational map 
$$
\eta:=\eta_4\circ (\mathrm{id}\times \eta_2)\circ(\eta_1),\qquad \eta:(\bP^4,D_L')\dashrightarrow(\bP^4,\Delta_4).
$$
The construction above can be applied identically to $(\bP^4, D_M')$ to obtain a similar map 
$$
\eta'\colon(\bP^4,D_M')\dashrightarrow (\bP^4,\Delta_4).
$$ 

\subsection{Proof of Proposition~\ref{prop:K3} and ~\ref{prop:K3cmap}}
Put
$$
\sigma:=\eta'\circ\varphi'\circ\psi\circ\varphi^{-1}\circ\eta^{-1}.
$$ Then $\sigma$ is a crepant birational automorphism of the standard toric pair $(\bP^4,\Delta_4)$. This completes the proof of Proposition~\ref{prop:K3}. 

Now we prove Proposition~\ref{prop:K3cmap}, i.e., $\mathbf{c}(\sigma)\ne0$. By construction, we have that
$$
\mathbf{c}(\sigma)=\mathbf{c}(\eta')+\mathbf{c}(\varphi')+\mathbf{c}(\psi)+\mathbf{c}(\varphi^{-1})+\mathbf{c}(\eta^{-1}).
$$
From \eqref{eqn:trivcvarphi} and \eqref{eqn:trivcvarphiprime}, one sees that
$$
\mathbf{c}(\varphi')=\mathbf{c}(\varphi^{-1})=0.
$$
Recall that
$$
\bc(\psi)=(S_L\times\bP^1,0)-(S_M\times\bP^1,0).
$$
Consider the projection   
$$
   \mathrm{dpr}\colon \mathbf{DivBurn}_{3}(\bC)\to \bZ\cdot\left((S_L\times\bP^{1},0)-(S_M\times\bP^{1},0)\right).
$$
By construction, we have
$$
\mathbf{c}(\eta^{-1})=-\mathbf{c}(\eta_1)-\mathbf{c}(\mathrm{id}\times\eta_2)-\mathbf{c}(\eta_4).
$$
From \eqref{eqn:ceta1}, \eqref{eqn:cideta2} and \eqref{eqn:ceta4}, we see that $\bc(\eta_1)$,  $\bc(\mathrm{id}\times\eta_2)$ and $\bc(\eta_4)$ only contain classes of the form $(C'\times\bP^2,0)$ for some curve $C'$.
It follows that 
$$
(\mathrm{dpr}\circ\mathbf{c})(\eta^{-1})=(\mathrm{dpr}\circ\mathbf{c})(\eta')=0
$$
since $S_M\times\bP^1$ and $S_L\times\bP^1$ are not birational to $ C'\times\bP^2$ for any curve $C'$. Indeed, their MRC quotients are the K3 surfaces $R_M$ and $R_L$ respectively. Then we know
$$
(\mathrm{dpr}\circ\mathbf{c})(\sigma)\ne 0.
$$ 
This implies that 
$
\bc(\sigma)\ne 0.
$

\begin{coro}\label{coro:main4}
    There exists a volume preserving birational automorphism $\sigma'$ of the pair $[\bP^4,\omega_4]$, where $\omega_4$ is the standard torus invariant volume form, such that 
    $$
   0\ne\mathbf{c}(\sigma')\in\Burnf_3(\bC).
    $$
    In particular, $\Bir_{\bC}(\bP^n,\omega_n)$ is not generated by pseudo-regularizable elements for $n\geq 4$.
\end{coro}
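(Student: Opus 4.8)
The plan is to follow verbatim the strategy used to deduce Corollary~\ref{coro:main3} from Proposition~\ref{prop:ellip}, now feeding in the four-dimensional construction. First I would take the crepant birational automorphism $\sigma\colon(\bP^4,\Delta_4)\dashrightarrow(\bP^4,\Delta_4)$ produced in Proposition~\ref{prop:K3}, which by Proposition~\ref{prop:K3cmap} has $\mathbf{c}(\sigma)\neq 0$ in $\dBurn_3(\bC)$; more precisely, the computation in the proof of Proposition~\ref{prop:K3cmap} shows that the class of $\mathbf{c}(\sigma)$ has nonzero image under the projection to $\bZ\cdot\bigl((S_L\times\bP^1,0)-(S_M\times\bP^1,0)\bigr)$. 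Since $(\bP^4,\Delta_4)$ is a log Calabi--Yau pair with $\Delta_4$ effective, Corollary~\ref{coro:volumepreserve} gives a positive integer $N$ with $\sigma^N\in\Bir_{\bC}(\bP^4,\omega_4)$; put $\sigma'=\sigma^N$. Because $\mathbf{c}$ is a homomorphism on $\Bir_{\bC}(\bP^4,\omega_4)$ and, by Remark~\ref{rema:samec}, is identified with the Lin--Shinder $c$-invariant of the underlying birational map, one gets
$$
0\neq\mathbf{c}(\sigma')=N\cdot\bigl([S_L\times\bP^1,0]-[S_M\times\bP^1,0]\bigr)\in\Burnf_3(\bC).
$$

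For the higher-dimensional assertion I would stabilize: $\sigma'$ extends to a volume preserving birational automorphism of $[\bP^4\times\bP^r,\omega_4\wedge\omega_r]$ acting by the identity on the $\bP^r$ factor, and then, via a toric identification $\bP^4\times\bP^r\dashrightarrow\bP^{4+r}$ sending $\omega_4\wedge\omega_r$ to $\omega_{4+r}$ (the same kind of map as $\eta_4$ in the construction above), to a volume preserving automorphism $\tilde\sigma\in\Bir_{\bC}(\bP^{4+r},\omega_{4+r})$. The additivity and base-change behaviour of $\mathbf{c}$ give
$$
\mathbf{c}(\tilde\sigma)=N\cdot\bigl([S_L\times\bP^{r+1},0]-[S_M\times\bP^{r+1},0]\bigr),
$$
and this is nonzero because $S_L$ and $S_M$ are not stably birational: for a very general HL-Cremona transformation $R_L\not\cong R_M$ by Theorem~\ref{theo:HLmain}, the MRC quotients of $S_L\times\bP^{r+1}$ and $S_M\times\bP^{r+1}$ recover $R_L$ and $R_M$, and birational K3 surfaces are isomorphic. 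Finally, by Example~\ref{exam:vanishpseudo} every pseudo-regularizable element of $\Bir_{\bC}(\bP^n,\omega_n)$ lies in $\ker\mathbf{c}$; since $\mathbf{c}$ is a homomorphism, the subgroup it generates is contained in $\ker\mathbf{c}$, and $\tilde\sigma\notin\ker\mathbf{c}$ shows this subgroup is proper. Hence $\Bir_{\bC}(\bP^n,\omega_n)$ is not generated by pseudo-regularizable elements for $n\geq 4$.

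Almost all ingredients are already in place: the existence of $N$ (Corollary~\ref{coro:volumepreserve}), the homomorphism and identification properties of $\mathbf{c}$ (Lemma~\ref{lemm:discr}, Remark~\ref{rema:samec}), and the toric stabilization map are routine. The one point genuinely requiring attention is that the stable birational distinction between $S_L$ and $S_M$ persists after multiplying by $\bP^{r+1}$ and, correspondingly, that the $\dBurn_3$-computation of Proposition~\ref{prop:K3cmap} survives the further stabilization to $\Burnf_{r+2}(\bC)$ — i.e.\ that the target projection killing all classes $(C'\times\bP^2,0)$ in the earlier argument can be replaced by a projection in $\Burnf_{r+2}(\bC)$ onto $[S_L\times\bP^{r+1},0]-[S_M\times\bP^{r+1},0]$. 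This is mild and reduces to the MRC-quotient argument together with uniqueness of the Fourier--Mukai partner; it is the only step that is more than bookkeeping parallel to Corollary~\ref{coro:main3}.
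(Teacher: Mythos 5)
Your proposal follows the paper's own proof essentially verbatim: take $\sigma$ from Proposition~\ref{prop:K3} with $\mathbf{c}(\sigma)\ne 0$ by Proposition~\ref{prop:K3cmap}, pass to $\sigma'=\sigma^N$ via Corollary~\ref{coro:volumepreserve}, transfer nonvanishing through Remark~\ref{rema:samec}, stabilize to $[\bP^{4+r},\omega_{4+r}]$ exactly as in Corollary~\ref{coro:main3}, and conclude with Example~\ref{exam:vanishpseudo}. The only caveat is that your displayed equality $\mathbf{c}(\sigma')=N\cdot\bigl([S_L\times\bP^1,0]-[S_M\times\bP^1,0]\bigr)$ slightly overstates what Proposition~\ref{prop:K3cmap} gives, since $\mathbf{c}(\sigma)$ may also contain rational classes of the form $(C'\times\bP^2,0)$ or $(\bP^3,0)$ coming from $\eta,\eta'$; but as you yourself note, only the nonvanishing of the projection onto $\bZ\cdot\bigl([S_L\times\bP^{r+1},0]-[S_M\times\bP^{r+1},0]\bigr)$ is needed, so the argument is unaffected.
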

\begin{proof}
    Let $\sigma$ be the crepant birational automorphism of $(\bP^4,\Delta_4)$ constructed in Proposition~\ref{prop:K3}. It follows from Corollary~\ref{coro:volumepreserve} that there exists a positive integer $N$ such that $\sigma^N$ is a volume preserving birational automorphism of $[\bP^4,\omega_4]$. Put $\sigma'=\sigma^N$. We have 
    $$
    0\ne c(\sigma')=N\cdot c(\sigma)\in\Burn_{3}(\bC)
    $$
    where $c$ is the Lin-Shinder invariant. 
    By Remark~\ref{rema:samec}, we know that
    $$
    0\ne\bc(\sigma')\in\Burnf_3(\bC).
    $$
   Similar to the proof of Corollary \ref{coro:nonsimple3}, the map $\sigma'$ extends to a volume preserving birational automorphism $\tilde \sigma$ of the pair  
$
[\bP^{4+r},\omega_{4+r}]
$
for $r\geq 0$ with 
$$
    0\ne\bc(\tilde\sigma)\in\Burnf_{r+3}(\bC).
    $$
        The last assertion then follows from Example \ref{exam:vanishpseudo}.
\end{proof}

\begin{coro}\label{coro:nonsimple4}
     There exists a surjective homomorphism when $n\geq4:$
     $$
     \Bir_{\bC}(\bP^n,\omega_n)\to A
     $$
 where $A=\bigoplus_J \bZ$ and $J$ is a set of the same cardinality as $\bC.$ In particular, $\Bir_{\bC}(\bP^n,\omega_n)$ is not simple when $n\geq 4$.
\end{coro}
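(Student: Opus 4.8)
The plan is to mimic the proof of Corollary~\ref{coro:nonsimple3}, replacing the quintic genus one construction of Proposition~\ref{prop:ellip} with the HL--Cremona construction of Section~\ref{sect:K3}. Fix $n\geq 4$. Let $\cM$ be the set of isomorphism classes of very general polarized K3 surfaces of degree $12$ over $\bC$, that is, those lying outside the countable union of Noether--Lefschetz loci (imposing $\Pic=\bZ\cdot\Gamma$) together with the finitely many further proper subvarieties of the moduli space $\cF_{12}$ cut out by the open conditions in Remarks~\ref{rem-points-span-plane} and~\ref{rema:smoothcubicgeneral} and by Proposition~\ref{prop:HLBLint}. Since $\cF_{12}$ is an irreducible complex variety of dimension $19$, the set $\cM$ has cardinality $|\bC|=2^{\aleph_0}$. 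For $R\in\cM$, Theorem~\ref{theo:HLmain} produces the HL--Cremona transformation $\psi_R$ together with the two degree $9$ surfaces $S_L=S_L(R)$, which is birational to $R$, and $S_M=S_M(R)$, which is birational to the unique non-isomorphic Fourier--Mukai partner $R^\vee$ of $R$. Because each such $R$ has only one non-isomorphic Fourier--Mukai partner (and the relation is symmetric), the equivalence relation $R\sim R^\vee$ on $\cM$ has classes of size at most two, so the quotient $J:=\cM/\!\sim$ still has cardinality $|\bC|$.

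For each $j\in J$ fix a representative $R^{(j)}$ and write $S_L^{(j)},S_M^{(j)}$ for its degree $9$ surfaces. Since birational K3 surfaces are isomorphic and the MRC quotient of $T\times\bP^{n-3}$ recovers a K3 surface $T$ up to birational equivalence, the classes $[S_L^{(j)}\times\bP^{n-3},0]$ and $[S_M^{(j)}\times\bP^{n-3},0]$, as $j$ ranges over $J$, are pairwise distinct basis elements of $\Burnf_{n-1}^{=0}(\bC)=\iota(\Burn_{n-1}(\bC))$. Hence
\[
A':=\bigoplus_{j\in J}\bZ\cdot\bigl([S_L^{(j)}\times\bP^{n-3},0]-[S_M^{(j)}\times\bP^{n-3},0]\bigr)\ \subset\ \Burnf_{n-1}(\bC)
\]
is a free abelian group of rank $|J|$, and I let $\mathrm{pr}\colon\Burnf_{n-1}(\bC)\to A'$ be the retraction that sends $[S_L^{(j)}\times\bP^{n-3},0]$ to the $j$-th generator and every other basis element to $0$.

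Next I exhibit enough volume preserving automorphisms. For each $j$, Proposition~\ref{prop:K3} yields a crepant birational automorphism $\sigma_{R^{(j)}}$ of $(\bP^4,\Delta_4)$; by Corollary~\ref{coro:volumepreserve} some power $\sigma_{R^{(j)}}^{N_j}$ is volume preserving, and, exactly as in the proof of Corollary~\ref{coro:main4}, it extends (acting as the identity on the remaining $n-4$ coordinates) to an element $\tilde\sigma_j\in\Bir_{\bC}(\bP^n,\omega_n)$. By Remark~\ref{rema:samec} and the computation in the proof of Proposition~\ref{prop:K3cmap},
\[
\mathbf c(\tilde\sigma_j)=N_j\bigl([S_L^{(j)}\times\bP^{n-3},0]-[S_M^{(j)}\times\bP^{n-3},0]\bigr)+\Theta_j,
\]
where $\Theta_j$ is a $\bZ$-combination of classes of the form $[\bP^{n-1},0]$ and $[C'\times\bP^{n-2},0]$ contributed by the auxiliary maps $\varphi^{\pm1},\varphi',\eta^{\pm1},\eta'$. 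None of these varieties is birational to any $T\times\bP^{n-3}$ with $T$ a K3 surface, since their MRC quotients have dimension at most $1$; hence $\mathrm{pr}(\Theta_j)=0$ and $\mathrm{pr}(\mathbf c(\tilde\sigma_j))=N_j\cdot(\text{$j$-th generator of }A')\neq 0$. Therefore the image of the group homomorphism $\mathrm{pr}\circ\mathbf c\colon\Bir_{\bC}(\bP^n,\omega_n)\to A'$ contains $\bigoplus_{j\in J}N_j\bZ$, so it is a subgroup of $A'$ of rank $|J|=|\bC|$; being a subgroup of a free abelian group it is itself free abelian, and of rank $|\bC|$, whence $A:=\mathrm{im}(\mathrm{pr}\circ\mathbf c)\cong\bigoplus_J\bZ$. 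For the final assertion, $\ker(\mathrm{pr}\circ\mathbf c)$ is a proper normal subgroup because $A\neq 0$, and it is nontrivial because by Example~\ref{exam:vanishpseudo} it contains every pseudo-regularizable element, for instance the standard torus $\bG_m^n(\bC)$; hence $\Bir_{\bC}(\bP^n,\omega_n)$ is not simple.

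The essential geometry is already contained in Propositions~\ref{prop:K3} and~\ref{prop:K3cmap}; the remaining work is the bookkeeping of spreading this over the uncountable moduli $\cF_{12}$. The points to verify are: that the ``very general'' hypotheses of Section~\ref{sect:K3} cut out only a countable union of proper subvarieties, so that continuum-many admissible $R$ survive; that Fourier--Mukai partnership has finite classes on $\cM$, so $J$ does not collapse; and --- the one genuinely delicate point --- that for distinct $j$ the invariants $\mathbf c(\tilde\sigma_j)$ point in independent directions. This last point is where I expect the main difficulty, and it rests on two facts: non-birational very general degree $12$ K3 surfaces remain non-birational after multiplying by a projective space (an MRC-quotient argument), and the parasitic $\varphi$- and $\eta$-contributions $\Theta_j$ are supported on varieties with low-dimensional MRC quotient and so are annihilated by $\mathrm{pr}$.
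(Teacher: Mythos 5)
Your proposal is correct and follows essentially the same route as the paper: index the target by Fourier--Mukai pairs of very general degree-$12$ K3 surfaces, form the free abelian subgroup $A'$ of $\Burnf_{n-1}(\bC)$ spanned by the differences $[S\times\bP^{n-3},0]-[S'\times\bP^{n-3},0]$, and compose $\mathbf c$ with the projection onto $A'$, using Propositions~\ref{prop:K3}--\ref{prop:K3cmap}, Corollary~\ref{coro:volumepreserve}, and the MRC-quotient argument to see that the parasitic exceptional classes are annihilated while each pair contributes a nonzero multiple of its generator. You merely spell out a few points the paper leaves implicit (the cardinality of the set of admissible K3s, the rank/freeness of the image, and the nontriviality of the kernel), so there is nothing to correct.
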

\begin{proof}
  
   Let $I$ be the set of isomorphism classes of K3 surfaces over $\bC$ of degree 12 and Picard rank 1 giving rise to a diagram~\eqref{eqn:HLcremonafact}. Proposition~\ref{prop:K3} shows that an HL-Cremona transformation associated with a very general such $K3$ surface gives rise to such a diagram. It follows that the set $I$ has the cardinality of $\bC$ \cite[Lemma 3.8]{SL}. Let $J$ be the set of unordered pairs 
    $$
    J:=\{(S,S'): S\in I\}
    $$
where $S'$ is the unique Fourier-Mukai partner of $S$ which is not isomorphic to $S.$
    Then $J$ also has the cardinality of $\bC$. For $n\geq 4$, put 
    $$
    A':=\bigoplus_{J}\bZ\cdot([S\times\bP^{n-3},0]-[S'\times\bP^{n-3},0])\subset\Burnf_{n-1}(\bC).
    $$
    Since distinct elements in $I$ are not stably birational to each other, $A'\simeq \bigoplus_J\bZ$. Consider the projection
    $$
    \mathrm{pr}:\Burnf_{n-1}(\bC)\to A'.
    $$
    Then the image of the map
    $$
    \mathrm{pr}\circ \bc: \Bir_{\bC}(\bP^n,\omega_n)\to A'
    $$
    is a free abelian group $A$ also isomorphic to $\bigoplus_J\bZ.$

\end{proof}

\bibliographystyle{alpha}

\bibliography{CY}

\end{document}